\newtheorem{theorem}{Theorem}[section]
\theoremstyle{plain}
\newtheorem{corollary}[theorem]{Corollary}
\newtheorem{lemma}[theorem]{Lemma}
\newtheorem{definition}[theorem]{Definition}
\newtheorem{proposition}[theorem]{Proposition}
\newtheorem{remark}[theorem]{Remark}
\numberwithin{equation}{section}
\begin{document}
\title{Factorizations of finite groups by conjugate subgroups which are
solvable or nilpotent}
\author{Martino Garonzi}
\address[Martino Garonzi]{Departamento de Matematica\\
Universidade de Bras\'{\i}lia\\
Campus Universit\'{a}rio Darcy Ribeiro\\
Bras\'{\i}lia - DF 70910-900\\
Brasil}
\email{mgaronzi@gmail.com}
\thanks{M.G. acknowledges the support of MIUR-Italy via PRIN Group theory
and applications.}
\author{Dan Levy}
\address[Dan Levy]{The School of Computer Sciences \\
The Academic College of Tel-Aviv-Yaffo \\
2 Rabenu Yeruham St.\\
Tel-Aviv 61083\\
Israel}
\email{danlevy@mta.ac.il}
\author{Attila Mar\'{o}ti}
\address[Attila Mar\'{o}ti]{Fachbereich Mathematik, Technische Universit\"{a}%
t Kaiserslautern\\
Postfach 3049, 67653 Kaiserslautern\\
Germany \\
and Alfr\'{e}d R\'{e}nyi Institute of Mathematics\\
Re\'{a}ltanoda utca 13-15\\
H-1053, Budapest\\
Hungary}
\email{maroti@mathematik.uni-kl.de and maroti.attila@renyi.mta.hu}
\thanks{A.M. acknowledges the support of an Alexander von Humboldt
Fellowship for Experienced Researchers and the support of OTKA K84233.}
\author{Iulian I. Simion}
\address[Iulian I. Simion]{Department of Mathematics\\
University of Padova\\
Via Trieste 63\\
35121 Padova\\
Italy}
\email{iulian.simion@math.unipd.it}
\thanks{I.S. acknowledges the support of the University of Padova (grants
CPDR131579/13 and CPDA125818/12). }
\date{\today}
\subjclass[2000]{ 20B15, 20B40, 20D05, 20D20, 20D40, 20E42 }
\keywords{products of conjugate subgroups, Sylow subgroups}

\begin{abstract}
We consider factorizations of a finite group $G$ into conjugate subgroups, $%
G=A^{x_{1}}\cdots A^{x_{k}}$ for $A\leq G$ and $x_{1},\ldots ,x_{k}\in G$,
where $A$ is nilpotent or solvable. First we exploit the split $BN$-pair
structure of finite simple groups of Lie type to give a unified
self-contained proof that every such group is a product of four or three
unipotent Sylow subgroups. Then we derive an upper bound on the minimal
length of\ a solvable conjugate factorization of a general finite group.
Finally, using conjugate factorizations of a general finite solvable group
by any of its Carter subgroups, we obtain an upper bound on the minimal
length of a nilpotent conjugate factorization of a general finite group.
\end{abstract}

\maketitle

\section{Introduction\label{intro}}

In this paper we continue the study of minimal length factorizations of
(mainly finite) groups into products of conjugate subgroups, that was
initiated in \cite{GL2014conjugates} and \cite{CGLMS2014conjugates}. For a
group $G$ and $A\leq G$, a conjugate product factorization (a $\func{cp}$%
-factorization) of length $k$ of $G$ by $A$, is a factorization $%
G=A_{1}\cdots A_{k}$ where $A_{1},\dots ,A_{k}$ are all conjugate to $A$ and
the product is the setwise product. Denoting the normal closure of $A$ in $G$
by $A^{G}$, an elementary argument shows that $A^{G}$ is equal to a product
of conjugates of $A$ (see \cite{KL2007products}). Thus, a necessary
condition (and for finite groups also a sufficient condition) for the
existence of the factorizations we are interested in, is $G=A^{G}$.

\begin{definition}
Let $G$ be a group and $A\leq G$. Then $\gamma _{\func{cp}}^{A}(G)$ is the
smallest number $k$ such that $G$ equals a product $A_{1}\cdots A_{k}$ of
conjugates of $A$ or $\infty $ if no such $k$ exists. We also set 
\begin{equation*}
\gamma _{\func{cp}}(G):=\min \{\gamma _{\func{cp}}^{A}(G)|A<G\}\text{,}
\end{equation*}%
where $\min \left\{ \infty \right\} :=\infty $.
\end{definition}

It is easy to see that $\gamma _{\func{cp}}^{A}(G)\geq 3$ if $A<G$ (\cite[%
Lemma 6]{GL2014conjugates}). For finite, non-nilpotent, solvable groups $%
\gamma _{\func{cp}}(G)\leq 4\log _{2}|G|$ \footnote{%
In fact, this bound is somewhat improved by Theorem \ref{Th_Carter} below.}
and no universal constant upper bound exists for all $G$ (see \cite[Theorems
4 and 5]{GL2014conjugates}). In contrast, if $G$ is a finite non-solvable
group, then $\gamma _{\func{cp}}(G)=3$ (\cite{CGLMS2014conjugates}).
Moreover, in \cite{CGLMS2014conjugates} we proved that $\gamma _{\func{cp}%
}(G)=3$ holds for any group $G$ with a $BN$-pair and a finite Weyl group,
and this optimal result arises from choosing $A$ to be the Borel subgroup of 
$G$ which is in particular solvable. This motivated us to diversify the
analysis of $\func{cp}$-factorizations in the present paper by imposing
conditions on the subgroup $A$.

Let $G$ be a finite simple group of Lie type whose defining characteristic
is $p$. The first problem we consider is the $\func{cp}$-factorization of $G$
by Sylow $p$-subgroups (also called unipotent Sylows). Liebeck and Pyber
have proved (\cite[Theorem D]{LP2001linear}) that $G$ is a product of no
more than $25$ Sylow $p$-subgroups. Several papers considering the same
question then followed. In \cite{BNP2008mixing} it was claimed that the $25$
can be replaced by $5$, however a complete proof has not been published. A
sketch of a proof of this claim for exceptional Lie type groups appears in a
survey by Pyber and Szab\'{o} (\cite[Theorem 15]{PS2012growth}). Smolensky,
Sury and Vavilov (\cite[Theorem 1]{VSS2011unitriangular}) consider the
problem of unitriangular factorizations of Chevalley groups over commutative
rings of stable rank $1$. When specializing their results to elementary
Chevalley groups over finite fields, they get that any non-twisted finite
simple group of Lie type is a product of four unipotent Sylows. Later on,
these results were extended by Smolensky in \cite{smolensky2013twisted} to
cover some twisted Chevalley groups over finite fields or the field of
complex numbers.

In Section \ref{Section_uniSylow} we give a unified self-contained treatment
of the problem of finding minimal length products of unipotent Sylows for
all finite simple groups of Lie type, exploiting their split $BN$-pair
structure. Recall that a Carter subgroup of a finite group $G$ is a
self-normalizing nilpotent subgroup (see \cite{Carter1961carter}).

\begin{theorem}
\label{Th_unipotent_Sylows}Let $G$ be a simple group of Lie type whose
defining characteristic is $p$. Let $U$ be a Sylow $p$-subgroup of $G$ and
set $U^{-}:=U^{n_{o}}$, where $n_{0}$ is a representative of the longest
element of the associated Weyl group.\ Then $G=\left( UU^{-}\right) ^{2}$,
and $G=UU^{-}U$ if and only if $U$ is a Carter subgroup. In both cases these
factorizations are of minimal length.
\end{theorem}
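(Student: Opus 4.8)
The plan is to work inside the split $BN$-pair framework and use the Bruhat decomposition to convert the factorization problem into a combinatorial problem about the Weyl group $W$. Let me think through the key structural facts. We have $G = \bigsqcup_{w \in W} B n_w B$ where $B = UT$ is the Borel subgroup, $T = B \cap B^-$ is the maximal torus, and $U, U^-$ are the unipotent radicals of the two opposite Borels. The element $n_0$ representing the longest Weyl element $w_0$ satisfies $U^{w_0} = U^-$, which is exactly how $U^- = U^{n_0}$ is defined here. My first move is to understand how products of conjugates of $U$ interact with the Bruhat cells, and in particular to establish the double-coset identity $U n_w U = U_w^- n_w U$ where $U_w^-$ is a specific part of $U^-$; this reduces covering $G$ to covering all cells.

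\smallskip

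Here is how I would organize the argument. First I would show the inclusion $B = UT \subseteq UU^-$, or more precisely that $T$ normalizes things appropriately so that $UU^- = UU^-U = \ldots$ absorbs the torus; the point is that $T \le U^- U$ since $N_G(T)$ and the torus sit inside $\langle U, U^- \rangle$, and in a group of Lie type one has $G = \langle U, U^- \rangle$ with $T \subseteq UU^-U^-U$ type identities coming from $\mathrm{SL}_2$-computations on each root subgroup. Then I would prove $G = (UU^-)^2$ by a cell-by-cell Bruhat argument: writing $g \in BnB$ for some $n$ representing $w \in W$, I would use the factorization $BwB \subseteq U \cdot U^- \cdot (\text{something})$ and show that two alternations of $U$ and $U^-$ suffice to reach every cell. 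The cleanest route is to first prove $G = UU^-UU^-$ directly: since $BwB = U\, n_w\, U$ and $n_w \in N = \langle$ monomial elements $\rangle$ with each $n_w$ expressible through $U$ and $U^-$ (as $n_\alpha \in U_\alpha U_{-\alpha} U_\alpha$ for each simple reflection), one concatenates and collapses using the commutator relations to land in four factors.

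\smallskip

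For the length-three statement, the heart is the claim that $G = UU^-U$ precisely when $U$ is a Carter subgroup, i.e.\ when $U$ is self-normalizing, which for a unipotent Sylow means $N_G(U) = U$, equivalently $T = 1$, equivalently the torus is trivial. I would argue the forward direction by a cell-counting or dimension argument: $UU^-U$ is a union of double cosets, and its cardinality is at most $|U|\cdot|U^-|\cdot|U| = |U|^3$; comparing with $|G| = |U|^2 |T| \cdot (\text{sum over } W)$ via the order formula $|G| = |B|\sum_{w} q^{\ell(w)} = |U||T| \sum_w q^{\ell(w)}$, one sees that $|U|^3 \ge |G|$ forces $|T|$ to be small, and a careful count shows it forces $T = 1$. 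Conversely, when $T = 1$ the Borel $B = U$ is self-normalizing and nilpotent, hence a Carter subgroup, and I would show $UU^-U = U U^- B \supseteq$ every cell because the ``missing'' torus factor is no longer needed to traverse the Bruhat decomposition. The main obstacle I anticipate is the forward implication: ruling out $G = UU^-U$ whenever $T \ne 1$ requires more than a crude order inequality, since $|U|^3$ can exceed $|G|$ even when $T \ne 1$; one must instead track which cosets $U n_w U$ actually appear in $U U^- U$ and show that nontrivial torus elements in the Bruhat product cannot all be realized, likely by exhibiting a specific $g$ (for instance a regular semisimple element or an element in the longest cell twisted by a generic torus element) that escapes $UU^-U$ unless $T=1$.

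\smallskip

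Finally, minimality of both factorizations follows from the general lower bound $\gamma_{\func{cp}}^A(G) \ge 3$ for $A < G$ quoted from \cite{GL2014conjugates}: the length-three factorization is minimal because no proper subgroup admits a cp-factorization of length less than three, and the length-four factorization $(UU^-)^2$ is minimal among alternating products of exactly the two conjugates $U, U^-$ whenever $U$ fails to be a Carter subgroup, since in that case the length-three product $UU^-U$ does not fill $G$, as established above. I would close by noting that the equality $U^- = U^{n_0}$ makes $U^-$ a genuine $G$-conjugate of $U$, so these are bona fide cp-factorizations by $U$.
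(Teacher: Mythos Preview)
Your proposal has several genuine gaps that prevent it from going through as written.

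\textbf{The $(UU^{-})^{2}$ argument does not land in four factors.} Your plan is to write $n_w$ as a product of simple-reflection representatives $n_{\alpha_i}$, use $n_{\alpha_i} \in U_{\alpha_i} U_{-\alpha_i} U_{\alpha_i} \subseteq UU^{-}U$, and then ``collapse using the commutator relations.'' But concatenating over a reduced expression of length $\ell(w)$ gives $n_w \in (UU^{-}U)^{\ell(w)}$, so the resulting product of $U$'s and $U^{-}$'s has length growing with $\ell(w)$, and you give no mechanism for collapsing this to four. The commutator relations let you reorder root subgroups within $U$ or within $U^{-}$, but they do not merge an alternating word $UU^{-}UU^{-}\cdots$ of arbitrary length down to $(UU^{-})^{2}$. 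The paper's route is quite different: it proves that $N$ normalises the set $(UU^{-})^{2}$, reducing (via the Levi decomposition $U = U_s U_{w_0 s}$ and $L_s \le N_G(U_{w_0 s}) \cap N_G(U_{w_0 s}^{-})$) to the statement $G_s = (U_s U_s^{-})^{2}$ for each rank-one Levi $G_s$. The rank-one case is then handled by an explicit Bruhat-type criterion together with a cited computation (essentially the Gauss decomposition with prescribed semisimple part). Your sketch does not reach this reduction and offers no substitute.

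\textbf{The torus obstruction is a one-line identity you are missing.} You correctly sense that a crude cardinality bound $|UU^{-}U| \le |U|^{3}$ cannot rule out $G = UU^{-}U$ when $T \ne 1$, and you propose hunting for an element that escapes. The clean fact you want is simply
\[
T \cap UU^{-}U = \{1\},
\]
which follows immediately: if $t = u_1 v u_2$ with $u_i \in U$, $v \in U^{-}$, then $u_1^{-1} t u_2^{-1} \in B \cap U^{-} = 1$, forcing $t \in U \cap T = 1$. So as soon as $T \ne 1$, any nontrivial torus element witnesses $UU^{-}U \ne G$. (Incidentally, this also shows your earlier claim that $T \subseteq U^{-}U$ is false unless $T=1$.) Conversely, when $T=1$ one has $B=U$, and $G = BB^{-}B = UU^{-}U$ is the standard three-Borel factorisation from \cite{CGLMS2014conjugates}; your cell-by-cell sketch for this direction is fine in spirit but unnecessary.

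\textbf{Minimality needs one more step.} You only argue that the particular alternating product $UU^{-}U$ fails when $T \ne 1$, but the theorem claims minimality among \emph{all} $\func{cp}$-factorisations $U^{x_1}U^{x_2}U^{x_3}$. One must first show (via Bruhat and the decomposition $U = U_w U_{w_0 w}$) that any length-$k$ product of conjugates of $U$ is contained in the alternating product $UU^{-}U\cdots$ of the same length; this is Lemma~\ref{Lem_reduction_to_UV} in the paper and is what turns the failure of $UU^{-}U$ into $\gamma_{\func{cp}}^{U}(G) \ge 4$.
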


After the completion of our proof of Theorem \ref{Th_unipotent_Sylows}, and
in parallel to its publication in preprint form (\cite{GLMSsmallU42015}),
Smolensky made available a preprint in which he shows that every Suzuki and
Ree group is a product of four unipotent Sylow subgroups (\cite%
{SmolenskySuzRee2015}). Thus, the results in \cite{tavgen1990bounded},\cite%
{tavgen1992bounded},\cite{VSS2011unitriangular},\cite{smolensky2013twisted}
and \cite{SmolenskySuzRee2015}, combine to give a different proof of the
four Sylow claim of Theorem \ref{Th_unipotent_Sylows}.

Theorem \ref{Th_unipotent_Sylows} and the $\func{cp}$-factorization of
finite simple groups of Lie type by Borel subgroups mentioned above clearly
motivate a general study of $\func{cp}$-factorizations of a finite group $G$
by subgroups which are nilpotent or solvable. Observe that if $G$ is any
finite group then it is equal to the product of a finite number of
conjugates of a nilpotent (hence also solvable) $A\leq G$. For if $G$ is
nilpotent or $A<G$ is both nilpotent and non-normal maximal, the claim is
clear. Otherwise $G=A_{1}\cdots A_{k}$ is a $\func{cp}$-factorization where $%
A_{1}$ is some maximal non-normal subgroup of $G$, and by induction $A_{1}$
has a nilpotent $\func{cp}$-factorization. Therefore, for a finite group $G$
the following quantities are always natural numbers:%
\begin{equation*}
\gamma _{\func{cp}}^{\func{s}}(G):=\min \{\gamma _{\func{cp}}^{A}(G)|A\leq G%
\text{ is solvable}\}\text{.}
\end{equation*}%
\begin{equation*}
\gamma _{\func{cp}}^{\func{n}}(G):=\min \{\gamma _{\func{cp}}^{A}(G)|A\leq G%
\text{ is nilpotent}\}\text{.}
\end{equation*}%
Note that $\gamma _{\func{cp}}^{\func{s}}(G)=1$ ($\gamma _{\func{cp}}^{\func{%
n}}(G)=1$) if and only if $G$ is solvable (nilpotent). In Section \ref%
{Sect_solvable_factorizations} we prove the following upper bound on $\gamma
_{\func{cp}}^{\func{s}}(G)$ in terms of the quantity $|G|_{\func{nab}}$,
defined as the product of the orders of all non-abelian composition factors
of $G$ (if $G$ is solvable, $|G|_{\func{nab}}:=2$).

\begin{theorem}
\label{Th_loglog}For any finite group $G$, we have $\gamma _{\func{cp}}^{%
\func{s}}(G)\leq $ $1+c_{S}(\log _{2}\log _{2}|G|_{\func{nab}})^{2}$, where $%
0<c_{S}\leq 12/\log _{2}(5)<5.17$ is a universal constant.
\end{theorem}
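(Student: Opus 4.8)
The plan is to induct on the composition structure of $G$, controlling everything through its non-abelian composition factors, and to let the symmetric group carry the combinatorial weight. The first move is a reduction modulo the solvable radical. If $N\trianglelefteq G$ is solvable and $G/N=\overline{A}_1\cdots\overline{A}_s$ is a cp-factorization by a solvable $\overline{A}\le G/N$, then the full preimages $A_i$ each contain $N$, are therefore solvable, are pairwise $G$-conjugate, and satisfy $A_1\cdots A_s=G$; hence $\gamma_{\func{cp}}^{\func{s}}(G)\le\gamma_{\func{cp}}^{\func{s}}(G/N)$. Since a solvable normal subgroup contributes no non-abelian composition factor, I may assume the solvable radical is trivial, so that $\operatorname{Soc}(G)=T_1\times\cdots\times T_k$ with $T_j=S_j^{m_j}$ a product of the $G$-conjugate simple components, $C_G(\operatorname{Soc}(G))=1$, and $G$ embeds into $\prod_j\big(\operatorname{Aut}(S_j)\wr\operatorname{Sym}(m_j)\big)$. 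Up to the solvable outer parts, $|G|_{\func{nab}}$ is then the product of the $|S_j|^{m_j}$ together with the orders of the alternating sections $A_{m_j}$ coming from the permutation action on the components.

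The heart of the construction is a single solvable $A\le G$ whose $G$-conjugates build the socle and the permutation quotient at once. For each $j$ I take the diagonal base $\prod_{i=1}^{m_j}B_{j,i}$, where $B_j\le S_j$ realizes a short cp-factorization of $S_j$ (a Borel subgroup when $S_j$ is of Lie type, of length $3$ by the $BN$-pair result of \cite{CGLMS2014conjugates} together with Theorem \ref{Th_unipotent_Sylows}; sporadic $S_j$ form a finite list with a constant bound, while alternating $S_j$ are absorbed into the symmetric-group recursion below), extended by a large solvable subgroup $P_j\le\operatorname{Sym}(m_j)$, which normalizes the base because the $B_{j,i}$ are identified copies of $B_j$. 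Conjugating $A$ by socle elements adjusts each coordinate independently, so the socle is covered in $\max_j\gamma_{\func{cp}}^{B_j}(S_j)=O(1)$ steps; projecting to the top, products of conjugates of $A$ exhaust the permutation quotient exactly when the $P_j$ cp-factorize the groups $\operatorname{Sym}(m_j)$. Thus everything reduces to bounding $\gamma_{\func{cp}}^{\func{s}}(\operatorname{Sym}(m))$, with the socle and top tasks combined essentially additively.

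The core estimate is a halving recursion for the symmetric group. Writing $T(m)$ for the number of conjugate solvable subgroups needed to factor $\operatorname{Sym}(m)$, I intend to realize a pair of dyadic block systems of size $m/2$ by conjugating a fixed solvable subgroup supported on those blocks, reducing to $\operatorname{Sym}(m/2)$ at an additive cost of $O(\log m)$ conjugates; this gives $T(m)\le T(m/2)+O(\log m)$ and hence $T(m)=O\big((\log_2 m)^2\big)$. Feeding this back, summing the constant socle cost and the top cost over $j$, and using $|S_j|>2^5$ to convert the component counts and the degrees $m_j$ into $\log_2\log_2|G|_{\func{nab}}$ (so that $\log_2 m_j\le\log_2\log_2|G|_{\func{nab}}-\log_2 5$, and the number of components is controlled similarly), yields a bound of the shape $1+c_S(\log_2\log_2|G|_{\func{nab}})^2$; balancing the recursion constant against the factor $\log_2 5$ produced by $60>2^5$ is what gives $c_S\le 12/\log_2 5$.

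I expect the main obstacle to be twofold. First, the halving recursion for $\operatorname{Sym}(m)$ must be set up so that a \emph{single} $G$-conjugacy class suffices and the $O(\log m)$ merge cost per halving is honest rather than hiding a linear dependence. Second, the constant-length socle factorization and the $(\log m)^2$-length top factorization must be fused into one cp-factorization by a single solvable subgroup, which forces the conjugating elements acting on the base and on the permutation part to be made mutually consistent. Pushing the accumulated constants down to the stated $c_S\le 12/\log_2 5$ is where the careful estimates will concentrate.
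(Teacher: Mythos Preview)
Your plan diverges from the paper's approach and, as written, has a genuine structural gap.

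The paper does \emph{not} try to build one solvable subgroup $A$ that simultaneously handles the socle and the permutation quotient. Instead it isolates a clean additive reduction (Lemma~\ref{Lem_gamma(G)<=gamma'(N)+gamma(G/N)}): if $N\trianglelefteq G$ and $B\le N$ is solvable, self-normalizing in $N$, and has the property that every automorphism of $N$ sends $B$ to an $N$-conjugate (a ``special solvable'' subgroup), then a Frattini-type argument gives $H=N_H(B)\,N$ for any $H$ between $N$ and $G$, and one obtains $\gamma_{\func{cp}}^{\func{s}}(G)\le \gamma_{\func{cp}}^{\func{ss}}(N)+\gamma_{\func{cp}}^{\func{s}}(G/N)$. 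This lemma is then iterated along the \emph{non-abelian socle series} $R(G)=H_1\le H_2\le\cdots$, giving $\gamma_{\func{cp}}^{\func{s}}(G)\le 1+\sum_{i=1}^{m}\gamma_{\func{cp}}^{\func{ss}}(T_i)$. The two $\log\log$ factors arise separately: one from the bound $\gamma_{\func{cp}}^{\func{ss}}(S)<12\log_2\log_2|S|$ for each simple $S$ (Sylow $2$-normalizers in $A_n$ give the $12$), and the other from the inequality $5n_i\le n_{i-1}$ on successive socle widths, which forces the socle length $m<(1/\log_2 5)\log_2\log_2|G|_{\func{nab}}$.

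Your proposal, by contrast, attempts a single-layer argument, and the ``fusing'' problem you flag at the end is exactly the missing idea. You need the conjugates of a \emph{single} solvable $A$ to cover both the socle and the quotient, but you give no mechanism for this; the paper's mechanism is precisely the special-solvable Frattini argument, which you do not have. Moreover, your scheme does not iterate: after one pass the quotient $G/\operatorname{Soc}(G)$ (modulo its solvable radical) is a subgroup of $\prod_j\operatorname{Sym}(m_j)$, not $\prod_j\operatorname{Sym}(m_j)$ itself, and its non-abelian composition factors need not be the $A_{m_j}$ as you assert; they can be arbitrary simple subquotients of symmetric groups, so another socle layer appears and must be handled in turn. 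Your halving recursion $T(m)\le T(m/2)+O(\log m)$ addresses $\operatorname{Sym}(m)$, not these subgroups, and cp-factorizations do not pass to subgroups. Finally, note that the paper actually gets $\gamma_{\func{cp}}^{\func{ss}}(A_n)<12\log_2 n$, a \emph{linear} bound in $\log n$, not the $(\log n)^2$ you aim for; the squared $\log\log$ in the theorem comes from multiplying this by the socle-series length, not from a quadratic estimate on symmetric groups. So the route to the constant $12/\log_2 5$ is different from what you sketch, and your accounting for it does not match the actual sources of the two factors.
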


Using Theorem \ref{Th_loglog} we can reduce the problem of obtaining an
upper bound on $\gamma _{\func{cp}}^{\func{n}}(G)$ for a general finite
group $G$ to a solvable $G$. Moreover, it turns out that there is a
judicious choice of a nilpotent subgroup in the case that $G$ is solvable -
that of a Carter subgroup (see above and Lemma \ref{Lem_Carter_properties}).
The following theorem is proved in Section \ref%
{Sect_nilpotent_factorizations}.

\begin{theorem}
\label{Th_Carter} Any finite solvable group $G$ is a product of at most $%
1+c_{A}\log _{2}\left\vert G:C\right\vert $ conjugates of a Carter subgroup $%
C$, where $0<c_{A}\leq 3/\log _{2}(5)<1.3$ is a universal constant.
\end{theorem}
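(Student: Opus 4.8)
The plan is to build the factorization incrementally: start from a single copy of $C$ and append conjugates so that the partial product grows by a guaranteed multiplicative factor at each stage, while invoking the structure theory of Carter subgroups to guarantee that the process never stalls below $G$. First I would record the facts needed from Lemma \ref{Lem_Carter_properties} and Carter's theorem (\cite{Carter1961carter}): in a finite solvable group the Carter subgroups form a single conjugacy class, the image $CN/N$ in any quotient $G/N$ is again a Carter subgroup, and $|G:C|$ equals the product of the orders of the eccentric chief factors, on each of which $C$ acts without nonzero fixed points and which $C$ avoids. This last point is the engine of the argument: if $V$ is a minimal normal subgroup with $V\not\le C$, then $C\cap V=1$, $VC=V\rtimes C$, and $C_V(C)=0$, since a nonzero fixed point would lie in $V\cap N_G(C)=V\cap C=1$.

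The heart of the proof is a growth lemma of the following shape: if $P=C^{x_{1}}\cdots C^{x_{j}}$ with $C\le P$ and $P\ne G$, then there exist three further conjugates with $|C^{x_{1}}\cdots C^{x_{j+3}}|\ge\min\{5|P|,|G|\}$. To prove it I would locate a chief factor $H/K$ not yet covered by $P$, pass to the section $V=H/K$ on which $C$ acts fixed-point-freely, and realize the relevant conjugates of $C$ as point stabilizers inside the affine group $V\rtimes C$. Writing $C^{x}=x^{-1}Cx$ and conjugation by $v\in V$ additively, one computes $C^{v}=\{((c-1)v,c):c\in C\}$, so multiplying $P$ by such conjugates fills cosets of $V$ through the commutator maps $c\mapsto(c-1)v$; the condition $C_V(C)=0$ forces these images to spread out enough that three conjugates enlarge the product by at least a factor of $5$. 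The numerical value $5$, and hence the constant $c_A=3/\log_2(5)$, is exactly what a careful accounting of this covering yields, and it is essentially optimal: small Frobenius-type examples such as $S_3$ with $C$ a point stabilizer show that the ratio $(\gamma_{\func{cp}}^{C}(G)-1)/\log_2|G:C|$ approaches this value.

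Granting the growth lemma, the theorem follows by iteration. Starting from $P_{0}=C$ (one conjugate, $|P_{0}|=|C|$) and applying the lemma $t$ times, the partial product after $1+3t$ conjugates has size at least $\min\{5^{t}|C|,|G|\}$; hence $P$ equals $G$ once $5^{t}\ge|G:C|$, that is, once $t\ge\log_5|G:C|$. This gives a factorization into at most $1+3\lceil\log_5|G:C|\rceil$ conjugates, and a slightly more careful treatment of the final factor absorbs the rounding into the stated bound $1+c_A\log_2|G:C|$ with $c_A\le 3/\log_2(5)<1.3$.

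The main obstacle is the growth lemma itself, and specifically the uniform factor of $5$. The delicate points are that the $C$-action on the uncovered chief factor need not be coprime, so one cannot simply split $V=[V,C]\oplus C_V(C)$ and must instead argue directly that the images of the commutator maps cover $V$ after a bounded number of steps; that $P$ need not be a subgroup, so the inductive bookkeeping on the covered cosets must be set up for arbitrary setwise products; and that the gain must be controlled \emph{uniformly} across all eccentric chief factors so that the single additive $+1$ does not reappear at each stage. I expect the smallest fixed-point-free modules, for instance $\mathbb{F}_2^{2}$ and $\mathbb{F}_q$ under a cyclic complement, to be the configurations that pin down the extremal constant, and I would treat them explicitly to certify that $3/\log_2(5)$ cannot be improved by this method.
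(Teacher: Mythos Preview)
Your proposal has a genuine gap: the ``growth lemma'' that three further conjugates always enlarge the partial product by a factor of~$5$ is neither proved nor, as stated, strong enough to yield the bound with the additive~$+1$. Granting your lemma, iteration gives $1+3\lceil\log_5|G:C|\rceil$ conjugates, and the ceiling can cost up to three extra factors; for instance with $G=A_4$ and $C$ a Sylow $3$-subgroup one has $|G:C|=4$, and your bound is $1+3\lceil\log_5 4\rceil=4$, whereas $1+c_A\log_2 4\approx 3.58$. No ``careful treatment of the final factor'' repairs this without a sharper growth statement, because you would need growth by~$4$ using only~$2$ conjugates in that step, and nothing in your sketch provides a growth rate that adapts to the size of the remaining gap. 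Relatedly, your suggestion that $S_3$ realizes the extremal ratio is off: for $S_3$ one gets $(\gamma-1)/\log_2|G:C|=2/\log_2 3\approx 1.26$, strictly below $3/\log_2 5$. The extremal configuration is a chief factor of order~$5$.

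The paper proceeds quite differently and this is where the constant actually comes from. One argues by induction on $|G|$: take a minimal normal subgroup $N$, factor $G/N$ by Carter subgroups using induction, and then factor the proper subgroup $C_kN$ again by induction; the two logarithmic contributions add and the single $+1$ is shared. The base of the induction is the affine primitive case $G=V\rtimes C$ with $V\cong C_p^{\,n}$ (Theorem~\ref{Th_AffinePrimitive}). There one does not aim for a uniform factor~$5$. Instead, for $w=v^{h^{-1}}-v$ one observes that $sw\in H^{sv}H$ for every scalar $s$, so the \emph{binary} expansion of $s$ places $\langle w\rangle$ inside a product of $\lceil\log_2 p\rceil+1$ conjugates (Lemma~\ref{Lem_trick}); choosing a basis $\{w^{h_1},\dots,w^{h_n}\}$ and telescoping the conjugators gives $G$ as a product of $1+n\lceil\log_2 p\rceil$ conjugates. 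The constant $c_A=3/\log_2 5$ then arises from the purely arithmetic fact (Lemma~\ref{Lem_maxf(p)}) that $\lceil\log_2 p\rceil/\log_2 p$ is maximized over primes at $p=5$. So the growth rate is $p$ per $\lceil\log_2 p\rceil$ extra conjugates, varying with the prime of the chief factor, and it is this prime-dependent rate, not a uniform factor~$5$, that produces the sharp bound.
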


\begin{corollary}
\label{Coro_ConjugateNil}For any finite group $G$ there exists a nilpotent $%
H\leq G$ such that 
\begin{equation*}
\gamma _{\func{cp}}^{\func{n}}(G)\leq \gamma _{\func{cp}}^{H}(G)\leq
1+c_{S}c_{A}\left( \log _{2}|G:H|\right) \left( \log _{2}\log _{2}|G|\right)
^{2}\text{.}
\end{equation*}
\end{corollary}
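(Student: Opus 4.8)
The plan is to derive Corollary \ref{Coro_ConjugateNil} by combining Theorem \ref{Th_loglog} and Theorem \ref{Th_Carter} through a two-stage factorization argument. The first inequality, $\gamma_{\func{cp}}^{\func{n}}(G) \leq \gamma_{\func{cp}}^{H}(G)$, is immediate from the definition of $\gamma_{\func{cp}}^{\func{n}}(G)$ as a minimum over nilpotent subgroups, provided we exhibit a nilpotent $H \leq G$ satisfying the stated upper bound; so the substance of the proof is producing such an $H$ together with the bound on $\gamma_{\func{cp}}^{H}(G)$. The natural strategy is: first use Theorem \ref{Th_loglog} to factor $G$ into conjugates of a \emph{solvable} subgroup $A$, and then factor $A$ (or rather, pass to a Carter subgroup of a solvable quotient/section) into conjugates of a nilpotent subgroup via Theorem \ref{Th_Carter}, finally composing the two factorizations.

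The key steps I would carry out in order are as follows. First, apply Theorem \ref{Th_loglog} to obtain a solvable subgroup $A \leq G$ with a $\func{cp}$-factorization of length at most $1 + c_S (\log_2 \log_2 |G|_{\func{nab}})^2$; since $|G|_{\func{nab}} \leq |G|$, this length is bounded by $1 + c_S(\log_2\log_2|G|)^2$. Second, let $C$ be a Carter subgroup of the solvable group $A$, which is nilpotent and self-normalizing in $A$; by Theorem \ref{Th_Carter} applied to $A$, the group $A$ is a product of at most $1 + c_A \log_2|A:C|$ conjugates of $C$. Third, I would set $H := C$ (a nilpotent subgroup of $G$) and substitute the factorization of each conjugate $A^{x_i}$ of $A$ by the corresponding conjugated factorization into $\func{cp}$-factors conjugate to $C$ in $G$. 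The total length is then the product of the two lengths, giving roughly $\bigl(1 + c_S(\log_2\log_2|G|)^2\bigr)\bigl(1 + c_A\log_2|A:C|\bigr)$, and one bounds $\log_2|A:C| \leq \log_2|G:H|$ since $H = C \leq A \leq G$ forces $|A:C| \leq |G:C| = |G:H|$.

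The main obstacle I expect is the bookkeeping that turns a product of two factorization \emph{lengths} into the single additive-plus-multiplicative bound $1 + c_S c_A (\log_2|G:H|)(\log_2\log_2|G|)^2$ as stated, rather than a looser product of two $(1 + \cdots)$ factors. Concretely, multiplying $(1+X)(1+Y)$ with $X = c_S(\log_2\log_2|G|)^2$ and $Y = c_A\log_2|A:C|$ yields $1 + X + Y + XY$, and the claimed bound is essentially the leading cross-term $1 + XY$ (with the understanding that $|A:C| \le |G:H|$); so I would need to verify that the lower-order terms are absorbed, most plausibly because a length-$\ell$ $\func{cp}$-factorization of a proper subgroup $A$ composed with a length-$m$ factorization of $G$ by $A$ actually costs $\ell m$ factors and not $\ell + m$, and because one may arrange the first factor of the outer factorization to already be replaced rather than counted separately. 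I would handle the edge cases ($G$ solvable, or $A$ already nilpotent) separately, where one of the logarithms degenerates, and confirm the stated constants propagate correctly from Theorems \ref{Th_loglog} and \ref{Th_Carter}.
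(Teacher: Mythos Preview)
Your approach is exactly the one the paper intends: the corollary is stated in the introduction without a separate proof, with the understanding that it follows by composing Theorem~\ref{Th_loglog} (solvable factorization of $G$) with Theorem~\ref{Th_Carter} (Carter factorization of the solvable factor), taking $H$ to be a Carter subgroup $C$ of the solvable $A$. Your observation that $|A:C|\le |G:H|$ is the right way to pass from $\log_2|A:C|$ to $\log_2|G:H|$.

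Your flagged obstacle is real, and the paper does not address it. Composition gives
\[
\gamma_{\func{cp}}^{H}(G)\ \le\ k\ell\ \le\ (1+X)(1+Y),\qquad X=c_S(\log_2\log_2|G|)^2,\ Y=c_A\log_2|A:C|,
\]
and $(1+X)(1+Y)=1+X+Y+XY$ is strictly larger than the stated $1+XY$. One way to close the gap, using the slack between $|A:C|$ and $|G:H|=|G:A|\cdot|A:C|$, is to note that the claimed bound is $1+c_Sc_A(\log_2\log_2|G|)^2\log_2|G:H|=1+XY+c_Sc_A(\log_2\log_2|G|)^2\log_2|G:A|$, so it suffices to check
\[
X+Y\ \le\ c_Sc_A(\log_2\log_2|G|)^2\log_2|G:A|.
\]
When $G$ is nilpotent take $H=G$; when $G$ is solvable but not nilpotent take $A=G$ (so $k=1$) and verify $c_A\log_2|G:H|\le c_Sc_A(\log_2\log_2|G|)^2\log_2|G:H|$ using $|G|\ge 6$ and $c_S\ge 1$; when $G$ is non-solvable one has $|G:A|\ge 2$ and $|G|\ge 60$, and then the inequality above can be checked directly with the explicit values $c_S=12/\log_2 5$, $c_A=3/\log_2 5$. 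This bookkeeping is tedious but routine; your outline is otherwise complete and matches the paper's (implicit) argument.
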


The proof of Theorem \ref{Th_Carter} requires the following result which is
of independent interest.

\begin{theorem}
\label{Th_AffinePrimitive}Let $G$ be a finite affine primitive permutation
group with a non-trivial point stabilizer $H$. Then $G$ is a product of at
most $1+c_{A}\log _{2}\left\vert G:H\right\vert $ conjugates of $H$, where $%
0<c_{A}\leq 3/\log _{2}(5)<1.3$ is a universal constant.
\end{theorem}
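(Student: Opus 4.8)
The plan is to prove Theorem~\ref{Th_AffinePrimitive} about a finite affine primitive permutation group $G$. First I would fix notation: an affine primitive group has a unique minimal normal subgroup $V$ which is elementary abelian of order $p^d$ (acting regularly as translations), and a point stabilizer $H$ which is an irreducible subgroup of $\mathrm{GL}(V)$ satisfying $G = V \rtimes H$. Crucially, $H$ is a \emph{complement} to $V$, so $|G:H| = |V| = p^d$ and the target bound becomes $1 + c_A d \log_2 p$. Since $H$ acts irreducibly on $V$, the normal closure $H^G$ contains $V$ (as $[V,H]$ is a nonzero $H$-invariant subspace, hence all of $V$), so $G = H^G$ and the factorization exists; the real content is the \emph{logarithmic} length bound.

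\medskip

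The key idea I would pursue is that a product of two conjugates of $H$ already captures a translation-coset structure that lets us build up $V$ additively. Concretely, for $v \in V$ the conjugate $H^v = v^{-1} H v$, and a product such as $H^{v_1} H^{v_2} \cdots$ should be analyzed by tracking the $V$-component. The natural approach is to show that each additional pair (or small fixed number) of conjugate factors at least \emph{doubles} the size of the subset of $V$ reached, giving the $\log_2$ behavior. More precisely, I would look for specific conjugators so that the $V$-parts of the accumulated product span an increasing chain of subspaces or additively generate a doubling sequence of subsets; since $\dim V = d$ and $V$ is covered once we reach $p^d$ elements, a doubling argument yields length $O(d \log_2 p) = O(\log_2 |V|) = O(\log_2 |G:H|)$. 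The universal constant $c_A \le 3/\log_2(5)$ strongly suggests that the base case or building block achieves a factor of $5$ per step (e.g.\ a product covering a set that grows by a factor of $5$, or a small group generated in length comparable to $\log_5$), and that the $+1$ accounts for one initial or terminal factor of $H$ itself.

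\medskip

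The hard part will be controlling the interaction between the linear action of $H$ on $V$ and the additive doubling: a single conjugate $H^v$ only shifts by the single vector $v$ (modulo the action), so to reach many new vectors quickly I must exploit that $H$ acts \emph{irreducibly}, producing an $H$-orbit of $v$ that spans $V$, and combine this with the multiplicative growth coming from products. I expect the cleanest route is to reduce to a statement about how fast $H$-translates additively generate $V$ and then invoke an abstract growth/covering lemma—possibly the same doubling mechanism underlying Theorem~\ref{Th_loglog}—to convert linear-algebraic spanning into a bounded number of conjugate factors. A subtle point to handle carefully is the case of small $d$ or small $p$ (where the asymptotic doubling has not yet ``switched on''), which the $+1$ term and the explicit constant are designed to absorb; I would verify these boundary cases directly to pin down that $c_A \le 3/\log_2(5)$ suffices uniformly.
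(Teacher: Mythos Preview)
Your setup is correct and you have identified the right ingredients (irreducibility of $H$ on $V$, building $V$ additively from conjugates of $H$), but the proposal is missing the concrete mechanism that actually yields the bound, and your guesses about where the constant and the doubling come from are off target.

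The paper's proof does \emph{not} proceed by an iterative doubling of the subset of $V$ reached, nor is there any link to the machinery of Theorem~\ref{Th_loglog}. The engine is a very explicit computation (Lemma~\ref{Lem_trick}): for $h\in H$ and $v\in V$, the commutator $w:=v^{h^{-1}}-v$ lies in $H^{v}H$, and more generally $cw\in H^{cv}H$ for every scalar $c\in\mathbb{F}_p$. Writing an arbitrary scalar $s\in\{0,\dots,p-1\}$ in base~$2$ as $\sum_{j=0}^{k-1}b_j 2^j$ with $k=\lceil\log_2 p\rceil$, one gets $sw\in (H^{v}H)(H^{2v}H)\cdots(H^{2^{k-1}v}H)$; a telescoping choice of $V$-conjugators collapses this into a product of $k+1$ conjugates of $H$ containing the whole line $\langle w\rangle$. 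This binary-expansion-of-scalars trick is the missing idea in your plan; ``doubling'' appears only in this literal sense, not as a growth-in-size argument.

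The irreducibility of $H$ is then used exactly once: one shows there exist $h_1,\dots,h_n\in H$ such that $\{w^{h_1},\dots,w^{h_n}\}$ is a basis of $V$ (otherwise the span of the $H$-orbit of $w$ would be a proper invariant subspace). Applying the line-covering lemma to each basis vector and telescoping again gives $V\subseteq$ a product of $n\lceil\log_2 p\rceil+1$ conjugates of $H$, hence $G=HV$ is such a product.

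Finally, the constant $c_A\le 3/\log_2 5$ has nothing to do with a factor-of-$5$ growth step: it comes from the elementary inequality $\lceil\log_2 p\rceil\le (3/\log_2 5)\log_2 p$, where the ratio $\lceil\log_2 p\rceil/\log_2 p$ over primes is maximized at $p=5$ (Lemma~\ref{Lem_maxf(p)}). The ``boundary cases'' you worry about are absorbed entirely by this ceiling estimate, not by a separate small-case analysis.
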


A natural question to ask, in view of the last theorem, is to what extent
can it be generalized to an arbitrary finite primitive permutation group. We
plan to consider this question separately (\cite{GLMSInProgress}).

\section{Factorizations by unipotent Sylow subgroups\label{Section_uniSylow}}

The proof of Theorem \ref{Th_unipotent_Sylows} consists of two main steps:
1. A reduction to the case of a two element Weyl group (i.e., $W=Z_{2}$, and
we shall say that $G$ is a "rank 1 group"), which is carried within the
framework of groups with a split $BN$-pair, and some extra assumptions to be
detailed in the sequel. 2. A derivation of a general necessary and
sufficient criterion for rank $1$ groups satisfying an extended set of split 
$BN$-pair assumptions, that is then verified to hold for the special case of
groups with a $\sigma $-setup, using a result from \cite{CEG2000gauss}.

We would like to point out that although the proof of \cite[Theorem 1]%
{VSS2011unitriangular} also uses a "reduction to rank 1 argument" which is
due to Tavgen$^{\prime }$ (\cite{tavgen1990bounded}), we do not know if
there is a more direct relation between this approach and ours.

We treat simple groups of Lie type in the setting of groups with a $\sigma $%
-setup as in \cite[Definition 2.2.1]{GLS1998number3}. For this fix a prime $%
p $, a simple algebraic group $\overline{K}$ defined over $\overline{\mathbb{%
F}}_{p}$ and a Steinberg endomorphism $\sigma $ of $\overline{K}$, and
consider $K$ - the subgroup of $C_{\overline{K}}(\sigma )$ generated by all $%
p$-elements. All groups $K$ obtained in this way are said to have a $\sigma $%
-setup given by the pair $(\overline{K},\sigma )$. The set of all groups
possessing a $\sigma $-setup for the prime $p$ is denoted by ${\mathcal{L}ie}%
(p)$. We have surjective homomorphisms $K_{u}\longrightarrow
K\longrightarrow K_{a}$ with central kernels (\cite[Theorem 2.2.6]%
{GLS1998number3}), where the groups $K_{u}$, $K_{a}\in {\mathcal{L}ie}(p)$
are called the universal and the adjoint version of $K$, respectively. Due
to the classification of simple groups of Lie type, if we exclude the Tits
group ${}^{2}F_{4}(2)^{\prime }$, then all such groups lie in ${\mathcal{L}ie%
}(p)$ for some $p$ appearing as the adjoint version $K_{a}$ of some $K$. Set 
${\mathcal{L}ie:}=\cup {\mathcal{L}ie}(p)$ the union over all primes $p$.
For $G\in {\mathcal{L}ie}$ {we have (see \cite[Chapter 2]{carter1985finite}):%
}

\begin{enumerate}
\item[(i)] $G$ is a group with a split $BN$-pair $(B,N)$ and a finite Weyl
group $W$, where $B=H\ltimes U$,

\item[(ii)] $U$ is a Sylow $p$-subgroup of $G$,

\item[(iii)] $G$ is generated by its $p$-elements.
\end{enumerate}

\subsection{Reduction to $\left\vert W\right\vert =2$ case for groups with a
split $BN$-pair\label{SubSect_splitBNpairs}}

For our purposes we will call a triple $(H,U,N)$ a split $BN$-pair for a
group $G$ if $(H\ltimes U,N)$ satisfies the axioms of split $BN$-pairs in 
\cite[\S 2.5]{carter1985finite} with respect to $H$ and $U$. We assume that
the Weyl group $W=N/H$ of the $BN$-pair is finite (this certainly holds for
finite groups), and so the longest element $w_{0}=n_{0}H$ of $W$ exists and
defines subgroups $U^{-}:=U^{n_{o}}$ and $B^{-}:=B^{n_{o}}$. For $w\in W$ we
sometimes use $\dot{w}$ to denote an arbitrary choice of an element of $N$
such that $w=\dot{w}H$. We use the notation $U^{-}$, $X_{i}$, $U_{i}$, $%
X_{-i}$, $U_{w}$ from \cite[\S 2.5]{carter1985finite} for the $BN$-pair $%
(B,N)$ and we label with the upper-script `$-$' the corresponding subgroups
for $(B^{-},N)$, i.e. when $U$ and $B$ are replaced by $U^{-}$ and $B^{-}$
everywhere: $(U^{-})^{-}$, $X_{i}^{-}$, $U_{i}^{-}$, $X_{-i}^{-}$, $%
U_{w}^{-} $. Note that $(U^{-})^{-}=U$, $X_{i}^{-}=X_{-i}$, $%
X_{-i}^{-}=X_{i} $ and that $X_{i}=U_{s}$ and $X_{-i}=U_{s}^{-}$ for some
simple reflection $s $. In addition, define $L_{s}:=\langle
U_{s},U_{s}^{-},H\rangle $ and $G_{s}:=\langle U_{s},U_{s}^{-}\rangle $ for
any simple reflection $sH\in W$. Furthermore, we assume that the root
subgroups $X_{\alpha }$, $\alpha \in \Phi $ ($\Phi $ is the set of roots
associated with $W$) satisfy the commutator relations (\cite[p.61]%
{carter1985finite}).

\begin{lemma}
\label{Lem_reduction_to_UV} Let $G$ be a group with a split $BN$-pair $%
(H,U,N)$. Suppose that $G$ is a product of $k$ conjugates of $U$ ($k\geq 3$
an integer). Then 
\begin{equation*}
G=UU^{-}UU^{-}\cdots UU^{-}U^{\varepsilon }\text{,}
\end{equation*}%
where $\varepsilon \in \left\{ +,-\right\} $, $U^{+}=U$, the total number of
non-trivial conjugates of $U$ which appear on the r.h.s. is $k$, and $%
\varepsilon =+$ if and only if $k$ is odd.
\end{lemma}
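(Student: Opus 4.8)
The plan is to prove the reverse inclusion $G\subseteq UU^{-}UU^{-}\cdots U^{\varepsilon}$, since the alternating word on the right is itself a product of $k$ conjugates of $U$ and is therefore automatically contained in $G$; equality then gives the statement, and $\varepsilon$ is read off from the parity of $k$ (the alternating word of length $k$ ends in $U$ exactly when $k$ is odd). Writing the hypothesised factorization as $G=U^{x_{1}}U^{x_{2}}\cdots U^{x_{k}}$ and conjugating the whole identity by $x_{1}^{-1}$, I may assume the first factor is $U$ itself. Throughout, for $X\in\{U,U^{-}\}$ I write $X^{-}$ for the opposite subgroup, so that $(U^{-})^{-}=U$ as in the notation fixed above.

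Two structural facts drive the argument. First, by the Bruhat decomposition $G=\bigsqcup_{w\in W}U\dot{w}B$ together with $B=H\ltimes U$ and the fact that $H$ normalizes every $U^{\dot{w}}$, each conjugate of $U$ can be written as $(U^{\dot{w}})^{u}$ with $w\in W$ and the twist $u$ lying in $U$; applying instead the opposite Bruhat decomposition $G=\bigsqcup_{w}U^{-}\dot{w}B^{-}$ represents the same conjugate as $(U^{\dot{w'}})^{u^{-}}$ with twist $u^{-}\in U^{-}$. Thus the twist of a conjugate may be placed in $U$ or in $U^{-}$ at will. Second, the commutator relations give, for every $w$, the root‑group factorization $U^{\dot{w}}=(U^{\dot{w}}\cap U)(U^{\dot{w}}\cap U^{-})$; since the two factors are subgroups whose product is the group $U^{\dot{w}}$, the reverse‑order factorization $U^{\dot{w}}=(U^{\dot{w}}\cap U^{-})(U^{\dot{w}}\cap U)$ holds as well. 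Combining these, the basic absorption identity is that for $X\in\{U,U^{-}\}$ and any conjugate $A=(U^{\dot{w}})^{t}$ with $t\in X$ one has $X\cdot A=X\,U^{\dot{w}}\,t\subseteq X\,X^{-}\,t$, the $U^{\dot{w}}\cap X$ part being swallowed by the leading $X$.

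The main step is to run these absorptions left to right so that \emph{each} original factor contributes \emph{exactly one} new alternating letter, maintaining $U^{x_{1}}\cdots U^{x_{j}}\subseteq M_{j}\,t_{j}$, where $M_{j}:=UU^{-}UU^{-}\cdots$ is the alternating product of length $j$ and $t_{j}$ is a correction term. The base case is the normalization above with $M_{1}=U$ and $t_{1}=1$. For the inductive step, one slides $t_{j}$ past the next conjugate (which only re‑twists it, keeping it on the right), chooses the Bruhat side so that the twist of the re‑twisted conjugate matches the last letter $X$ of $M_{j}$, and applies the absorption identity to append the single opposite letter $X^{-}$, thereby forming $M_{j+1}$. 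Feeding $j=k$ back in gives $G=U^{x_{1}}\cdots U^{x_{k}}\subseteq M_{k}\,t_{k}$ with $t_{k}\in G$, whence $G=G\,t_{k}^{-1}\subseteq M_{k}\subseteq G$ and $G=M_{k}$, an alternating product of exactly $k$ nontrivial conjugates of the required parity.

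The hard part will be the length bookkeeping in this last step, and it is genuinely the crux rather than a routine verification. The naive bound $U^{\dot{w}}\subseteq UU^{-}$ spends two letters per original factor and only yields length $\approx 2k$; the correction term $t_{j}$ is what records the overflow and keeps the count at one new letter per factor. The delicate point is that the twist interacts asymmetrically with $U$ and $U^{-}$: the absorption $X\cdot A\subseteq XX^{-}t$ leaves the leftover $t$ in the \emph{old} group $X$, which is the wrong group relative to the \emph{new} last letter $X^{-}$, so the correction term cannot simply be absorbed on alternate steps. The route I would take to resolve this is to exploit the symmetry $U\leftrightarrow U^{-}$: since $w_{0}^{2}=1$ we have $n_{0}^{2}\in H$, so conjugation by $n_{0}$ swaps $U$ and $U^{-}$ and carries a $U$‑twist to a $U^{-}$‑twist. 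Using this together with the freedom (recorded in the second paragraph) to represent each incoming conjugate with its twist on either side, one reduces every $U^{-}$‑absorption step to a $U$‑absorption step, so that the correction term is always in the group matching the current end and is discarded for free at the very end. Making this alternation of sides consistent across the whole product is the one place that requires care.
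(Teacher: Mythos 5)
Your argument is correct in substance and rests on the same two pillars as the paper's proof: Bruhat control of the conjugating elements, and the two\-/sided decomposition $U^{\dot w}=(U^{\dot w}\cap U)(U^{\dot w}\cap U^{-})=(U^{\dot w}\cap U^{-})(U^{\dot w}\cap U)$ followed by absorption via $X^{2}=X$. Where you differ is the bookkeeping. The paper first rewrites $G=U^{x_{1}}\cdots U^{x_{k}}$ as $G=Ug_{1}U\cdots Ug_{k-1}U$ and uses the Bruhat form of each $g_{i}$ to push all $U$- and $H$-parts into the neighbouring factors simultaneously, so that every conjugator may be taken in $N$; then each $U^{n_{i}}$ lies in both $UU^{-}$ and $U^{-}U$, and the alternating merge is immediate with no correction term at all. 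Your version processes the factors one at a time and carries the leftover twist along as $t_{j}$; this buys nothing over the global normalization, but it does work, and the induction in your third paragraph already closes exactly as you state it.

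Two points deserve correction. First, the difficulty you isolate in your final paragraph is not actually there: the correction term $t_{j}$ never needs to lie in any particular subgroup, because at the next step it is merely conjugated into $U^{x_{j+1}}$, producing a fresh conjugate of $U$ which is then re\-/represented \emph{from scratch} with its twist on whichever side matches the current last letter; $t_{k}$ is discarded only once, at the very end, via $Gt_{k}^{-1}=G$. The $n_{0}$-conjugation device is therefore unnecessary, and would in any case be awkward to apply mid\-/induction since conjugating by $n_{0}$ swaps $U$ and $U^{-}$ throughout the word already built. Second, the one genuine slip is your justification of the $U^{-}$-twist representation: writing $g=v\dot whv'$ with $v,v'\in U^{-}$ via the opposite Bruhat decomposition does \emph{not} give $U^{g}=(U^{\dot w})^{v'}$, because the leading factor $v\in U^{-}$ does not normalize $U$. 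What you need is $G=\bigcup_{w}B\dot wU^{-}$, which follows by inverting $G=\bigcup_{w}U^{-}\dot wB=\bigcup_{w}B^{-}\dot wB$ (the ordinary Bruhat decomposition translated by $n_{0}$); then $g=b\dot wv$ with $b\in B=N_{G}(U)$ and $v\in U^{-}$ yields $U^{g}=(U^{\dot w})^{v}$ as required. With that repair your proof is complete and is essentially a local, one\-/factor\-/at\-/a\-/time implementation of the paper's argument.
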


\begin{proof}
We have $\gamma _{\func{cp}}^{U}(G)=k$ $\Leftrightarrow $ $%
G=U^{x_{1}}U^{x_{2}}\cdots U^{x_{k}}$ for some elements $x_{1},\ldots
,x_{k}\in G$. This is equivalent to $G=Ug_{1}U\cdots Ug_{k-1}U$ for some $%
g_{1},\ldots ,g_{k-1}\in G$ (see \cite[\S 2 Lemma 1]{CGLMS2014conjugates}).
By the Bruhat expression of elements (w.r.t. $H$ and $U$) we may assume that 
$g_{i}\in N$ for all $i$. Indeed, by \cite[Theorem 2.5.14]{carter1985finite}%
, for each $1\leq i\leq k-1$, we have $g_{i}=u_{i}h_{i}\dot{w}%
_{i}u_{i}^{\prime }$ where $u_{i}\in U$, $h_{i}\in H$, $w_{i}\in W$ and $%
u_{i}^{\prime }\in U_{w}\leq U$. Since the $h_{i}$ lie in $N_{G}(U)$ we have 
$Ug_{1}U\dots Ug_{k-1}U=Uh_{1}\dot{w}_{1}U\dots Uh_{k-1}\dot{w}_{k-1}U=U\dot{%
w}_{1}U\dots U\dot{w}_{k-1}U(h_{1}^{\dot{w}_{1}\cdots \dot{w}_{k-1}}\cdots
h_{k-1})$ so $\gamma _{\func{cp}}^{U}(G)=k$ is equivalent to $G=Ug_{1}U\dots
Ug_{k-1}U$ for some $g_{1},\ldots ,g_{k-1}\in N$. This is equivalent to $%
G=UU^{g_{1}^{-1}}U^{g_{2}^{-1}}\cdots U^{g_{k-1}^{-1}}$ for some $%
g_{1},...,g_{k-1}\in N$ (see proof of \cite[\S 2 Lemma 1]%
{CGLMS2014conjugates}).

Let $n\in N$ be arbitrary, and let $w=nH$. By \cite[Proposition 2.5.12]%
{carter1985finite} we have 
\begin{equation*}
U=U_{w_{0}w}U_{w}=\left( U\cap U^{n_{0}\left( n_{0}n\right) }\right) \left(
U\cap U^{n_{0}n}\right) \text{,}
\end{equation*}%
which gives 
\begin{equation*}
U^{n^{-1}}=\left( U\cap U^{n_{0}\left( n_{0}n\right) }\right)
^{n^{-1}}\left( U\cap U^{n_{0}n}\right) ^{n^{-1}}=\left( U^{n^{-1}}\cap
U\right) \left( U^{n^{-1}}\cap U^{n_{0}}\right) \leq UU^{-}\text{.}
\end{equation*}%
However, since $U,U_{w_{0}w},U_{w}$ are all subgroups, $U=U_{w_{0}w}U_{w}$
implies $U=U_{w}U_{w_{0}w}$, and hence we also get $U^{n^{-1}}\leq U^{-}U$.
Therefore: 
\begin{equation*}
G=UU^{g_{1}^{-1}}U^{g_{2}^{-1}}\cdots U^{g_{k-1}^{-1}}\subseteq U\left(
UU^{-}\right) \left( U^{-}U\right) \left( UU^{-}\right) \left( U^{-}U\right)
\cdots =\newline
UU^{-}UU^{-}U\cdots \text{,}
\end{equation*}%
where we have used $U^{2}=U$ and $\left( U^{-}\right) ^{2}=U^{-}$, and the
claim follows.
\end{proof}

In the following lemma we collect known results about minimal (non-abelian)
Levi subgroups which will be used in the sequel. First note that for a fixed
split $BN$-pair $(H,U,N)$ we have \emph{the (split) $BN$-pair opposite to $%
(B,N)$} given by $(H,U^{-},N)$. Clearly, for any $g\in G$, $(B^{g},N^{g})$
is a split $BN$-pair, and if $g\in N$ then $B^{g}\cap N=H$ so $%
B^{g}=H\ltimes U^{g}$. In particular this applies to $g=n_{0}$.

\begin{lemma}
\label{min_levi} Let $G$ have a split $BN$-pair $(H,U,N)$. Let $w_{0}$ be
the longest element of the Weyl group $N/H$ and $s=n_{s}H$ a simple
reflection with respect to $(H,U,N)$. Then

\begin{enumerate}
\item[(a)] $U=U_{s}U_{w_{0}s}=U_{w_{0}s}U_{s}$ and $%
U^{-}=U_{s}^{-}U_{w_{0}s}^{-}=U_{w_{0}s}^{-}U_{s}^{-}$,

\item[(b)] $L_{s}\subseteq N_{G}(U_{w_{0}s})\cap N_{G}(U_{w_{0}s}^{-})$.
\end{enumerate}
\end{lemma}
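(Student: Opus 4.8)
The plan is to prove each part using the structure theory of split $BN$-pairs as developed in Carter's book, working with root subgroups and the minimal Levi subgroup $L_s$ attached to a simple reflection $s$.

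For part (a), I would recall that for any $w \in W$ one has the factorization $U = U_w U_{w_0 w}$ from \cite[Proposition 2.5.12]{carter1985finite}, where $U_w$ is the product of root subgroups $X_\alpha$ for positive roots $\alpha$ sent to negative roots by $w^{-1}$, and $U_{w_0 w}$ collects the complementary positive roots. Specializing $w = s$ to a simple reflection, $U_s = X_i$ (a single positive root subgroup, the one corresponding to the simple root made negative by $s$), and $U_{w_0 s}$ is the product over the remaining positive roots. This immediately gives $U = U_s U_{w_0 s}$. Since $U$, $U_s$, and $U_{w_0 s}$ are all subgroups, the set-product identity $U = U_s U_{w_0 s}$ forces the reversed identity $U = U_{w_0 s} U_s$ as well (both products have the same order $|U|$ and are contained in $U$). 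The statement for $U^-$ follows by applying the same reasoning to the opposite split $BN$-pair $(H, U^-, N)$, using the dictionary $U_s^- = U_{w_0 s}$-type relations; concretely, conjugating the $U$-identity by $n_0$ and invoking $(U^-)^- = U$, $X_i^- = X_{-i}$ as recorded in the excerpt. The main subtlety here is bookkeeping the indices correctly under the $n_0$-conjugation and under passage to the opposite $BN$-pair, since the roles of $U_s$ and $U_{w_0 s}$ interchange in a way that must be tracked precisely against Carter's conventions.

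For part (b), I would argue that $L_s = \langle U_s, U_s^-, H \rangle$ normalizes both $U_{w_0 s}$ and $U_{w_0 s}^-$. The key observation is that $U_{w_0 s}$ consists of the root subgroups $X_\alpha$ for positive roots $\alpha$ other than the simple root $\alpha_i$ moved by $s$; equivalently, these are the positive roots whose expansion in simple roots has positive $\alpha_i$-coefficient restricted appropriately, or more precisely the positive roots fixed setwise under the action associated to $s$. One shows generation-by-generation: it suffices to check that each generator of $L_s$ normalizes $U_{w_0 s}$. For $H$ this is standard, since $H$ normalizes every root subgroup $X_\alpha$ (it acts by scalars/torus characters) and hence normalizes their product. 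For $U_s = X_i$ and $U_s^- = X_{-i}$, one uses the commutator relations \cite[p.61]{carter1985finite}: the commutator $[X_{\pm i}, X_\alpha]$ lies in the product of root subgroups $X_{m\alpha \pm n\alpha_i}$ over positive integer combinations, and one checks that for $\alpha \in \Phi^+ \setminus \{\alpha_i\}$ these resulting roots remain inside the set defining $U_{w_0 s}$ (i.e., they stay positive and distinct from $\alpha_i$, because $s$ permutes $\Phi^+ \setminus \{\alpha_i\}$). This closure under the commutator action is exactly what yields normalization. The same computation with $X_{-i}$ in place of $X_i$, together with the opposite-pair version, gives normalization of $U_{w_0 s}^-$.

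I expect the main obstacle to be part (b), specifically verifying that the commutator relations keep the relevant roots inside the prescribed set $\Phi^+ \setminus \{\alpha_i\}$ (respectively its negative). The crucial arithmetic fact is that a simple reflection $s_i$ sends $\alpha_i$ to $-\alpha_i$ but permutes all other positive roots among themselves; thus adding a multiple of $\alpha_i$ to a positive root $\alpha \neq \alpha_i$ cannot produce $\alpha_i$ itself nor a negative root that escapes $U_{w_0 s}$ after the action is accounted for. Making this precise against Carter's indexing of $U_w$ as a product of specific $X_\alpha$, and ensuring the commutator formula's output roots genuinely lie in the index set, is where the care is needed; once that combinatorial check on root systems is in place, the normalization statements follow formally, and the opposite-pair assertion is obtained by symmetry via conjugation by $n_0$.
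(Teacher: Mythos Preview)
Your approach is correct and covers the same ground as the paper, but the level of detail differs markedly between the two arguments.

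For part (a), you and the paper do essentially the same thing: cite the Carter factorization $U=U_{w}U_{w_0w}$ (the paper points to Proposition~2.5.11, you to 2.5.12; the latter is in fact the reference the paper itself uses in the proof of Lemma~\ref{Lem_reduction_to_UV} for exactly this identity), and then reverse the factors using that all three sets are subgroups. One point the paper makes explicit and you only allude to: before transferring the identity to $U^{-}$, one should note that $s$ remains a \emph{simple} reflection for the opposite pair $(H,U^{-},N)$, which the paper justifies via \cite[Propositions~2.2.6 and~2.2.7]{carter1985finite}. Without this, the dictionary $X_i^{-}=X_{-i}$, $U_s^{-}$, etc., is not well-defined in the required sense.

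For part (b), the paper simply invokes \cite[Proposition~2.6.4]{carter1985finite}, which already records that a standard Levi subgroup normalizes the unipotent radical of the corresponding parabolic, and specializes it to the minimal Levi $L_s$. Your route is genuinely different: you unpack this black box and verify the normalization generator by generator via the Chevalley commutator relations, using that if $\beta\in\Phi^{+}\setminus\{\alpha_i\}$ then any root of the form $m\beta\pm n\alpha_i$ (with $m\geq 1$) still has a positive coefficient on some simple root other than $\alpha_i$, hence remains in $\Phi^{+}\setminus\{\alpha_i\}$. That combinatorial check is exactly what is needed, and it goes through for both $X_i$ and $X_{-i}$ by the argument you sketch (the $X_{-i}$ case is the one requiring care, and your identification of the obstacle is accurate). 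Your argument is more self-contained and illuminates why the normalization holds; the paper's is shorter because it outsources the work to Carter.
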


\begin{proof}
Any $s=n_{s}H$ in $W$ is simple with respect to $(H,U,N)$ if and only if it
is simple with respect to $(H,U^{-},N)$: This follows from \cite[%
Propositions 2.2.6 and 2.2.7]{carter1985finite} and the fact that the
positive roots with respect to $(H,U^{-},N)$ are the negative roots with
respect to $(H,U,N)$. So $I:=\left\{ s_{1},...,s_{l}\right\} $, the set of
simple reflections for $(H,U,N)$, is a set of simple reflections for both of
these $BN$ pairs and $s=s_{i}$ for some $i\in \{1,\dots ,l\}$. Since $L_{s}$
is the subgroup $X_{i}H\cup X_{i}Hn_{i}X_{i}=\langle X_{i},X_{-i},H\rangle $
(see \cite[Corollary 2.6.2]{carter1985finite}), $X_{i}^{-}=X_{-i}$ and $%
X_{-i}^{-}=X_{i}$, it follows that $L_{s}=\langle
X_{i}^{-},X_{-i}^{-},H\rangle $ is a (minimal) standard Levi subgroup with
respect to both $(B,N)$ and $(B^{-},N)$.

Now (a) follows from \cite[Proposition 2.5.11]{carter1985finite} and (b) is
a particular case of \cite[Proposition 2.6.4]{carter1985finite}.
\end{proof}

The following lemma is \cite[Lemma 4]{VSS2011unitriangular}.

\begin{lemma}
\label{gen_set} Let $G$ be a group and let $X\subseteq G$ satisfy $X=X^{-1}$
and $G=\langle X\rangle $. If $\emptyset \neq Y\subseteq G$ is such that $%
XY\subseteq Y$ then $Y=G$.
\end{lemma}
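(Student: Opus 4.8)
The plan is to prove the equivalent inclusion $G\subseteq Y$, since $Y\subseteq G$ holds trivially. Because $Y\neq\emptyset$, I would first fix some base point $y_{0}\in Y$, and then show that every element of $G$ can be reached from $y_{0}$ by left multiplications that stay inside $Y$.

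The key preliminary observation is that the hypothesis $X=X^{-1}$ lets me dispense with inverses when expressing elements of $G$: the subgroup $\langle X\rangle$ consists of the identity together with all finite products of elements of $X\cup X^{-1}=X$, so every $g\in G$ with $g\neq 1$ can be written as $g=x_{1}x_{2}\cdots x_{n}$ with each $x_{i}\in X$. I would apply this not to $g$ directly but to $g y_{0}^{-1}$: given an arbitrary $g\in G$, either $g y_{0}^{-1}=1$, in which case $g=y_{0}\in Y$ and we are done, or $g y_{0}^{-1}=x_{1}x_{2}\cdots x_{n}$ for suitable $x_{i}\in X$.

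In the latter case the idea is to rebuild $g=x_{1}x_{2}\cdots x_{n}y_{0}$ by absorbing the factors one at a time from the right, repeatedly invoking $XY\subseteq Y$. Concretely, I would set $z_{0}:=y_{0}\in Y$ and $z_{j}:=x_{n-j+1}z_{j-1}$ for $1\leq j\leq n$, and verify by induction that $z_{j}\in Y$, the inductive step being exactly $z_{j}=x_{n-j+1}z_{j-1}\in XY\subseteq Y$. Since $z_{n}=x_{1}x_{2}\cdots x_{n}y_{0}=(g y_{0}^{-1})y_{0}=g$, this gives $g\in Y$. As $g\in G$ was arbitrary, $G\subseteq Y$, whence $Y=G$.

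There is no serious obstacle here; the single point that must not be overlooked is that the symmetry $X=X^{-1}$ is precisely what guarantees that $g y_{0}^{-1}$ admits an expression as a product of elements of $X$ involving no inverses, so that each factor peeled off on the right genuinely lies in $X$ and the absorption property $XY\subseteq Y$ applies at every step of the induction.
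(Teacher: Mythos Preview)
Your proof is correct. The paper does not actually supply a proof of this lemma; it simply quotes it as \cite[Lemma 4]{VSS2011unitriangular}, so there is nothing to compare your argument against beyond noting that it is a standard and complete verification of the statement.
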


\begin{proposition}
\label{reduction_rank1} Let $G$ be a group with a split $BN$-pair such that
the conjugates of $U$ in $G$ generate $G$. Let $k\geq 2$ be an integer and
assume further that $G_{s}=\left(U_{s}U_{s}^{-}\right) ^{k}$ for every
simple reflection $s$ then $G=\left(UU^{-}\right)^{k}$.
\end{proposition}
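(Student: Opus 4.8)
The plan is to reduce the rank-$1$ hypothesis $G_s=(U_sU_s^-)^k$ (valid for every simple reflection $s$) to the global factorization $G=(UU^-)^k$ by means of the generation criterion in Lemma \ref{gen_set}. The natural candidate set is $Y:=(UU^-)^k$, and the natural generating set is $X:=\bigcup_s (U_s\cup U_s^-)$ ranging over all simple reflections $s$. Since the $X_{\pm i}$ together with $H$ generate each $L_s$, and the conjugates of $U$ generate $G$ by hypothesis, one expects $\langle X\rangle=G$; moreover $X=X^{-1}$ because each $U_s$ and $U_s^-$ is a subgroup. If I can show $XY\subseteq Y$, then Lemma \ref{gen_set} forces $Y=G$, which is exactly the claim.

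The crux is therefore establishing the absorption property $U_s\cdot Y\subseteq Y$ and $U_s^-\cdot Y\subseteq Y$ for each simple $s$. First I would invoke Lemma \ref{min_levi}(a) to write $U=U_sU_{w_0s}=U_{w_0s}U_s$ and $U^-=U_s^-U_{w_0s}^-=U_{w_0s}^-U_s^-$, and Lemma \ref{min_levi}(b) to record that $L_s$ (hence $G_s$ and in particular $U_s,U_s^-$) normalizes both $U_{w_0s}$ and $U_{w_0s}^-$. The idea is that a leading factor $U_s$ can be commuted past the complementary pieces $U_{w_0s},U_{w_0s}^-$ using the normalization in (b), collecting all the $U_s,U_s^-$ content into a single rank-$1$ block inside $G_s$, which by hypothesis equals $(U_sU_s^-)^k$ and thus stays absorbed. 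Concretely, I expect to rewrite the leading $UU^-=U_sU_{w_0s}\cdot U_{w_0s}^-U_s^-$ and push $U_s$ (coming from the left multiplication by $X$) through so that the accumulated $G_s$-part never exceeds a $(U_sU_s^-)^k$ pattern, while the $U_{w_0s}$-type factors reassemble into the remaining $UU^-$ blocks.

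The main obstacle I anticipate is bookkeeping the interaction between the two factorizations of $U$ (and of $U^-$) from Lemma \ref{min_levi}(a): one must choose, at each step, the ordering $U_sU_{w_0s}$ versus $U_{w_0s}U_s$ so that the $U_{w_0s}$-factors are always adjacent to elements that normalize them, letting them slide freely, while the $U_s$-factors pile up only against $U_s^-$-factors to build the $(U_sU_s^-)^k$ pattern. The delicate point is verifying that after multiplying $Y$ on the left by a single $U_s$, the total number of alternating rank-$1$ blocks does not grow beyond $k$; this is where the equality $G_s=(U_sU_s^-)^k$ (rather than a mere containment) is essential, since it guarantees $(U_sU_s^-)^{k+1}=(U_sU_s^-)^k$ as subsets of $G_s$, so any extra $U_s$ is reabsorbed. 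Once the two cases $XY\subseteq Y$ for $X=U_s$ and $X=U_s^-$ are checked symmetrically (the latter using the opposite $BN$-pair $(H,U^-,N)$ and the analogous splitting of $U^-$), Lemma \ref{gen_set} closes the argument.
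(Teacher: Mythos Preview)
Your strategy is sound and close to the paper's, but the tactical choice of generating set differs, and one step you treat as obvious actually needs an argument.

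\textbf{What the paper does.} The paper also applies Lemma~\ref{gen_set} with $Y=(UU^-)^k$, but takes $X=\{u^g:u\in U,\ g\in G\}$, for which $\langle X\rangle=G$ is immediate from the hypothesis. Via the Bruhat form $g=u'h\dot w u''$ it reduces $u^gY\subseteq Y$ to the statement that $N$ normalizes $Y$, and then checks this for each simple reflection representative $n_s$. The key computation, using exactly your Lemma~\ref{min_levi}(a),(b), is the clean set-identity
\[
(UU^-)^k=(U_sU_s^-)^k\,(U_{w_0s}U_{w_0s}^-)^k=G_s\,(U_{w_0s}U_{w_0s}^-)^k,
\]
after which $n_s\in G_s\leq L_s$ clearly normalizes both factors.

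\textbf{How your route compares.} Your choice $X=\bigcup_s(U_s\cup U_s^-)$ bypasses the Bruhat reduction entirely: once the displayed decomposition is in hand, $U_sY=U_s\,G_s(U_{w_0s}U_{w_0s}^-)^k=G_s(U_{w_0s}U_{w_0s}^-)^k=Y$ and likewise for $U_s^-$, so no ``bookkeeping'' or separate opposite-$BN$ argument is needed, and the reabsorption $(U_sU_s^-)^{k+1}=(U_sU_s^-)^k$ is already packaged inside $G_s=(U_sU_s^-)^k$. This is arguably cleaner than the paper at the absorption step.

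\textbf{The point that needs work.} Your assertion $\langle X\rangle=G$ is not automatic from the hypothesis ``conjugates of $U$ generate $G$''. You must argue, for instance, that each $G_s$ contains a representative $n_s$ of $s$ (so $L_s=G_sH$ gives $n_s\in G_s$), that conjugating the $U_{s_i}$ by words in the $n_s$ produces all root subgroups (hence $U,U^-\subseteq\langle X\rangle$), and that $H$ normalizes each $U_s,U_s^-$; then $\langle X\rangle$ is normalized by $\langle U,N\rangle=G$, contains $U$, and therefore contains $U^G=G$. This is routine but should not be left as ``one expects''. The paper sidesteps this entirely by its choice of $X$.
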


\begin{proof}
Set $X:=\left\{ u^{g}|u\in U,g\in G\right\} $. Then $X=X^{-1}$ since $U$ is
a subgroup of $G$, and $G=\left\langle X\right\rangle $ since $G$ is the
normal closure of $U$. Set $Y:=\left( UU^{-}\right) ^{k}$. By Lemma \ref%
{gen_set} our claim will follow if we show that $XY\subseteq Y$. Thus it
suffices to show that $u^{g}Y\subseteq Y$ for any $u\in U$ and $g\in G$. By 
\cite[Theorem 2.5.14]{carter1985finite}, for any $g\in G$ there exist $%
u^{\prime }\in U$, $h\in H$, $w\in W$ and $u^{\prime \prime }\in U_{w}\leq U$
such that $g=u^{\prime }hn_{w}u^{\prime \prime }$. Hence $u^{g}=u^{u^{\prime
}hn_{w}u^{\prime \prime }}=\left( u^{u^{\prime }h}\right) ^{n_{w}u^{\prime
\prime }}$. \ But $u^{u^{\prime }h}\in U$, so it is sufficient to prove that 
$u^{nv}Y\subseteq Y$ for all $u,v\in U$ and $n\in N$. Now we claim that the
last statement follows if we prove that $N$ normalizes $Y$. For suppose that 
$N$ normalizes $Y=\left( UU^{-}\right) ^{k}$. We have: 
\begin{eqnarray*}
u^{nv}\left( UU^{-}\right) ^{k} &=&v^{-1}n^{-1}unv\left( UU^{-}\right)
^{k}=v^{-1}n^{-1}un\left( UU^{-}\right) ^{k}= \\
&=&v^{-1}n^{-1}u\left( UU^{-}\right) ^{k}n=v^{-1}n^{-1}\left( UU^{-}\right)
^{k}n= \\
&=&v^{-1}\left( UU^{-}\right) ^{k}n^{-1}n=\left( UU^{-}\right) ^{k}\text{.}
\end{eqnarray*}

Thus we prove that $N$ normalizes $\left( UU^{-}\right) ^{k}$. Since $H$
clearly normalizes $\left( UU^{-}\right) ^{k}$, and $N$ is generated by a
set $I$ of representatives for simple reflections together with $H$, it is
sufficient to prove that $\left( UU^{-}\right) ^{k}$ is normalized by all $n$
in $I$. Fix a simple reflection $s=nH$. By Lemma \ref{min_levi}.(a), $%
UU^{-}=U_{s}U_{w_{0}s}U_{w_{0}s}^{-}U_{s}^{-}$. By Lemma \ref{min_levi}.(b),
each of $U_{s}$ and $U_{s}^{-}$ commutes with both $U_{w_{0}s}$ and $%
U_{w_{0}s}^{-}$. This, and the assumption $G_{s}=\left(
U_{s}U_{s}^{-}\right) ^{k}$, give: 
\begin{equation*}
\left( UU^{-}\right) ^{k}=\left( U_{s}U_{s}^{-}\right) ^{k}\left(
U_{w_{0}s}U_{w_{0}s}^{-}\right) ^{k}=G_{s}\left(
U_{w_{0}s}U_{w_{0}s}^{-}\right) ^{k}\text{.}
\end{equation*}
Since $L_{s}=G_{s}H$ we can assume $n\in G_{s}$ and hence $nG_{s}=G_{s}n$.
Since $n\in G_{s}\leq L_{s}$, Lemma \ref{min_levi}.(b) gives $n\left(
U_{w_{0}s}U_{w_{0}s}^{-}\right)^{k}=\left( U_{w_{0}s}U_{w_{0}s}^{-}\right)
^{k}n$. Combining everything together yields: 
\begin{eqnarray*}
n \left( UU^{-}\right) ^{k} &=&n G_{s }\left( U_{w_{0}s }U_{w_{0}s
}^{-}\right) ^{k}=G_{s }n \left( U_{w_{0}s }U_{w_{0}s }^{-}\right) ^{k}= \\
&=&G_{s }\left( U_{w_{0}s }U_{w_{0}s }^{-}\right) ^{k}n =\left(
UU^{-}\right) ^{k}n \text{,}
\end{eqnarray*}
and the proof that $N$ normalizes $\left( UU^{-}\right) ^{k}$ is concluded.
\end{proof}

\begin{remark}
If $G$ is generated by $U$ and $U^{-}$ then, in the above proof, Lemma \ref%
{gen_set} can be avoided: if $N$ normalizes $\left( UU^{-}\right) ^{k}$ then 
$\left( UU^{-}\right) ^{k}$ is stable in particular under conjugation by $%
n_{0}H$ so it equals $\left( U^{-}U\right) ^{k}$. It is easy to see that if
this equality holds then $G=(UU^{-})^{k}$.
\end{remark}

\subsection{The case $\left\vert W\right\vert =2$}

\begin{lemma}
\label{Lem_HtildeProperties}Let $G$ be a group with a split $BN$-pair $%
(H,U,N)$ and a Weyl group $W=\left\{ 1,s_{1}\right\} $. Set $\left(
U^{-}\right) ^{\ast }:=U^{-}-\left\{ 1\right\} $. Fix an arbitrary $n_{1}\in
N$ such that $s_{1}=n_{1}H$, and set%
\begin{equation*}
\widetilde{H}:=\left\{ h\in H|\exists u^{-}\in \left( U^{-}\right) ^{\ast
},~Uu^{-}U=Un_{1}hU\right\} \text{.}
\end{equation*}%
Then:

\begin{enumerate}
\item[(a)] $U\left( U^{-}\right) ^{\ast }U=Un_{1}\widetilde{H}U$.

\item[(b)] $UU^{-}UU^{-}=UU^{-}\left( \left\{ 1\right\} \cup n_{1}\widetilde{%
H}n_{1}\widetilde{H}\right) \cup Un_{1}\widetilde{H}$.
\end{enumerate}
\end{lemma}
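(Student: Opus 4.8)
The plan is to prove (a) first by a direct appeal to the Bruhat decomposition in the rank $1$ setting, and then to derive (b) from (a) by purely set-theoretic algebra in the group, shuttling the elements $n_1$ and $\widetilde{H}$ across the factors $U$ and $U^-$. Throughout I would use only that $W=\{1,s_1\}$, that $s_1=w_0$ is the longest element (so $U_{s_1}=U$), and the standard facts of a split $BN$-pair.

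For (a), the starting observation is that $U^-\cap B=1$ (a consequence of $B\cap B^-=H$ and $U^-\cap H=1$), so every $u^-\in (U^-)^{\ast}$ lies outside $B$ and hence in the unique non-trivial Bruhat cell $Bn_1B$. Writing its Bruhat form $u^-=u_1 h n_1 u_2$ with $u_1\in U$, $h\in H$ and $u_2\in U_{s_1}\leq U$ via \cite[Theorem 2.5.14]{carter1985finite}, I would absorb $u_1$ and $u_2$ to get $Uu^-U=Uhn_1U$, and then rewrite this as $Un_1 h^{n_1}U$ using the elementary identity $hn_1=n_1 h^{n_1}$ together with $H\trianglelefteq N$ (so $h^{n_1}\in H$). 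By the very definition of $\widetilde{H}$ this forces $h^{n_1}\in\widetilde{H}$, giving $U(U^-)^{\ast}U\subseteq Un_1\widetilde{H}U$ after taking the union over all $u^-$. The reverse inclusion is immediate from the definition: for $h\in\widetilde{H}$ there is some $u^-\in (U^-)^{\ast}$ with $Un_1hU=Uu^-U\subseteq U(U^-)^{\ast}U$.

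For (b) I would first record the normalization facts to be used repeatedly: since $\widetilde{H}\subseteq H$ and $H$ normalizes both $U$ and $U^-$, we have $\widetilde{H}U=U\widetilde{H}$ and $\widetilde{H}U^-=U^-\widetilde{H}$; and since $U^-=U^{n_1}$ with $n_1^2\in H$, one gets $n_1U=U^-n_1$ and $n_1U^-=Un_1$. Combining (a) with $U^-=\{1\}\cup (U^-)^{\ast}$ and $UU=U$ yields the intermediate formula $UU^-U=U\cup Un_1\widetilde{H}U$, which is the real workhorse. Expanding $UU^-UU^-=(UU^-U)U^-=UU^-\cup Un_1\widetilde{H}UU^-$, I would then shuttle $n_1$ and $\widetilde{H}$ through the middle using the normalization facts to obtain $Un_1\widetilde{H}UU^-=(UU^-U)n_1\widetilde{H}$, resubstitute $UU^-U=U\cup Un_1\widetilde{H}U$, and finally apply the one-step simplification $Un_1\widetilde{H}U=UU^-n_1\widetilde{H}$ (again from the normalization facts) to land exactly on $UU^-\cup Un_1\widetilde{H}\cup UU^-n_1\widetilde{H}n_1\widetilde{H}$, which is the claimed right-hand side.

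The step I expect to be the main obstacle is the bookkeeping in (b): these are equalities of unions of double cosets rather than of single cosets, so I must check that every shuttle identity (such as $n_1U=U^-n_1$) is a genuine set equality that survives the unions, and that no cell is silently dropped or duplicated when splitting $U^-=\{1\}\cup(U^-)^{\ast}$. The one genuinely non-formal input is (a), and within it the only delicate point is justifying $Uhn_1U=Un_1h^{n_1}U$; this is not an illegitimate absorption of $h$ into $U$ but the literal element identity $hn_1=n_1h^{n_1}$, so it causes no trouble once stated explicitly.
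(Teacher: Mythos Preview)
Your proposal is correct and follows essentially the same route as the paper. For (a) the paper likewise uses $B\cap U^{-}=1$ to place $(U^{-})^{\ast}$ inside $Un_{1}HU$ and then reads off the inclusion from the definition of $\widetilde{H}$; for (b) the paper carries out the same sequence of shuttles (using $\widetilde{H}U=U\widetilde{H}$, $n_{1}U=U^{-}n_{1}$, $n_{1}U^{-}=Un_{1}$) and the same splitting of $U^{-}$ into $\{1\}\cup (U^{-})^{\ast}$, only inlining your intermediate identity $UU^{-}U=U\cup Un_{1}\widetilde{H}U$ rather than stating it separately.
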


\begin{proof}
(a) Since $W=\left\{ 1,s_{1}\right\} $ we have 
\begin{equation*}
G=B\cup Bn_{1}B=UH\cup UHn_{1}HU=UH\cup Un_{1}HU\text{,}
\end{equation*}%
where the union on the right is disjoint. By \cite[Proposition 2.5.5(i)]%
{carter1985finite}, $B\cap U^{-}=1$. Hence $\left( U^{-}\right) ^{\ast
}\subseteq Un_{1}HU$. Thus, for every $u^{-}\in \left( U^{-}\right) ^{\ast }$
there exist $h\in H$ such that $Uu^{-}U=Un_{1}hU$. But, by definition, $h\in 
\widetilde{H}$, so this proves $U\left( U^{-}\right) ^{\ast }U\subseteq
Un_{1}\widetilde{H}U$. The reverse inclusion is also clear and hence $%
U\left( U^{-}\right) ^{\ast }U=Un_{1}\widetilde{H}U$.

(b) Note that since each element of $H$ normalizes both $U$ and $U^{-}$, the
set $\widetilde{H}$ commutes with $U$. Also, $w_{0}=s_{1}$ and hence $%
n_{1}Un_{1}^{-1}=U^{-}$ and $n_{1}U^{-}n_{1}^{-1}=U$. Given this and the
relation in (a) we get:%
\begin{gather*}
UU^{-}UU^{-}=U\left( U^{-}\right) ^{\ast }UU^{-}\cup UU^{-}=Un_{1}\widetilde{%
H}UU^{-}\cup UU^{-}= \\
=UU^{-}n_{1}\widetilde{H}U^{-}\cup UU^{-}=UU^{-}Un_{1}\widetilde{H}\cup
UU^{-}= \\
=U\left( U^{-}\right) ^{\ast }Un_{1}\widetilde{H}\cup Un_{1}\widetilde{H}%
\cup UU^{-}= \\
=Un_{1}\widetilde{H}Un_{1}\widetilde{H}\cup Un_{1}\widetilde{H}\cup
UU^{-}=UU^{-}n_{1}\widetilde{H}n_{1}\widetilde{H}\cup Un_{1}\widetilde{H}%
\cup UU^{-}= \\
=UU^{-}\left( \left\{ 1\right\} \cup n_{1}\widetilde{H}n_{1}\widetilde{H}%
\right) \cup Un_{1}\widetilde{H}\text{.}\qedhere
\end{gather*}
\end{proof}

\begin{lemma}
\label{Lem_characterization_rank1} Let $G$ be a group with a split $BN$-pair 
$(H,U,N)$ and Weyl group $W=\left\{ 1,s_{1}\right\} $. Using the notation of
Lemma \ref{Lem_HtildeProperties}, the following conditions are equivalent:

\begin{enumerate}
\item[(a)] $\left( U^{-}\right) ^{\ast }\cap Un_{1}hU\neq \emptyset $ for
all $h\in H$. Equivalently $H=\widetilde{H}$.

\item[(b)] $U\left( U^{-}\right) ^{\ast }U=Un_{1}HU$.

\item[(c)] $G=\left( UU^{-}\right) ^{2}$.
\end{enumerate}
\end{lemma}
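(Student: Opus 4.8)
The plan is to prove the cyclic chain of implications, treating (a) $\Leftrightarrow$ (b) as essentially formal and concentrating the real work on (c). The backbone of the whole argument is the \emph{sharp} Bruhat decomposition of the big cell: I would first record that the $(U,U)$-double cosets $Un_{1}hU$, as $h$ ranges over $H$, partition $Bn_{1}B$, and that $h$ is a well-defined invariant of such a coset. The union is $Bn_{1}B$ because $Hn_{1}=n_{1}H$ gives $\bigcup_{h}Un_{1}hU=Un_{1}HU=UHn_{1}HU=Bn_{1}B$; for disjointness I would show that $n_{1}h\in Un_{1}h'U$ forces $h=h'$, by conjugating the relation $n_{1}h=un_{1}h'v$ through $n_{1}^{-1}Un_{1}=U^{-}$ and then using the split $BN$-pair facts $U\cap H=1$ and $B\cap U^{-}=1$ (the latter is \cite[Proposition 2.5.5(i)]{carter1985finite}, already used in the proof of Lemma \ref{Lem_HtildeProperties}). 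Since each $Un_{1}hU$ is a single $(U,U)$-double coset, this partition also shows that any $u^{-}\in (U^{-})^{\ast}\subseteq Bn_{1}B$ lies in exactly one cell $Un_{1}hU$, so that $u^{-}\in Un_{1}hU$ is equivalent to $Uu^{-}U=Un_{1}hU$; this is exactly what identifies the two phrasings of (a) with the condition $\widetilde{H}=H$.

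For (a) $\Leftrightarrow$ (b): by Lemma \ref{Lem_HtildeProperties}(a) the left-hand side of (b) is $U(U^{-})^{\ast}U=Un_{1}\widetilde{H}U$, so (b) asserts precisely $Un_{1}\widetilde{H}U=Un_{1}HU$. Viewing both sides as disjoint unions of the cells above, this equality holds if and only if $\widetilde{H}=H$, which is (a).

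For (b) $\Rightarrow$ (c): assuming $\widetilde{H}=H$, the big cell is immediately inside the product, since $Bn_{1}B=U(U^{-})^{\ast}U\subseteq UU^{-}U\subseteq (UU^{-})^{2}$. To capture the remaining cell $B$ I would feed $\widetilde{H}=H$ into Lemma \ref{Lem_HtildeProperties}(b) and simplify $n_{1}Hn_{1}H=H$, which holds because $Hn_{1}=n_{1}H$ and $n_{1}^{2}\in H$ (as $s_{1}^{2}=1$ in $W$); this collapses the formula to $(UU^{-})^{2}=UU^{-}H\cup Un_{1}H$, and then $B=UH\subseteq UU^{-}H$ because $1\in U^{-}$. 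Hence $G=B\cup Bn_{1}B\subseteq (UU^{-})^{2}$, and the reverse inclusion is trivial.

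The main step, and the place where I expect to spend the most care, is (c) $\Rightarrow$ (a). The key local computation is that for a fixed $h\in H$ one has $n_{1}h\in (UU^{-})^{2}$ if and only if $n_{1}h\in UU^{-}U$. The nontrivial direction starts from $n_{1}h=abcd$ with $a,c\in U$ and $b,d\in U^{-}$; using that $H$ normalizes $U^{-}$ and that $n_{1}U^{-}n_{1}^{-1}=U$ rewrites $n_{1}hd^{-1}$ as $d_{2}n_{1}h$ with $d_{2}\in U$, so that $bc=a^{-1}n_{1}hd^{-1}=a^{-1}d_{2}n_{1}h$ and hence $n_{1}h=d_{2}^{-1}a\,bc\in UU^{-}U$. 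Granting this, since $UU^{-}U=U\cup Un_{1}\widetilde{H}U$ by Lemma \ref{Lem_HtildeProperties}(a) and $n_{1}h\in Bn_{1}B$ is disjoint from $U$, membership $n_{1}h\in UU^{-}U$ is equivalent to $h\in\widetilde{H}$ via the partition. As $G=(UU^{-})^{2}$ contains every element $n_{1}h$, this forces $\widetilde{H}=H$, i.e.\ (a). The delicate points to get right are the bookkeeping of the conjugation identities, so that the two occurrences of $n_{1}$ genuinely cancel, and the clean use of the partition to read off the $H$-invariant; once the partition statement of the first paragraph is in place, all three equivalences follow without further case analysis.
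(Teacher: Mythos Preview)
Your proof is correct. For (a) $\Leftrightarrow$ (b) and for (a) $\Rightarrow$ (c) you follow essentially the paper's line, invoking Lemma~\ref{Lem_HtildeProperties}. For (c) $\Rightarrow$ (a), however, you take a genuinely different route. The paper observes that multiplying the Bruhat decomposition on the right by $n_{1}$ yields the \emph{disjoint} union $G=Un_{1}H\cup UU^{-}H$; comparing this globally with the formula of Lemma~\ref{Lem_HtildeProperties}(b), one sees that $(UU^{-})^{2}=G$ forces $Un_{1}\widetilde{H}=Un_{1}H$, whence $\widetilde{H}=H$. Your argument instead works element by element: you show by a short conjugation calculation that $n_{1}h\in(UU^{-})^{2}$ already implies $n_{1}h\in UU^{-}U$, and then read off $h\in\widetilde{H}$ from the partition of $Bn_{1}B$ into the cells $Un_{1}hU$. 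Both arguments rest on the same structural facts ($H$ normalises $U^{-}$, $n_{1}U^{-}n_{1}^{-1}=U$, $n_{1}^{2}\in H$, and the sharp Bruhat form), but the paper packages them into a single set-theoretic comparison, while you isolate the key element $n_{1}h$ and strip off the trailing $U^{-}$-factor directly. Your approach has the merit of making the reduction $(UU^{-})^{2}\ni n_{1}h\Rightarrow n_{1}h\in UU^{-}U$ completely explicit; the paper's is shorter once one has spotted the alternative disjoint decomposition of $G$.
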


\begin{proof}
By definition $\widetilde{H}\subseteq H$ and by Lemma \ref%
{Lem_HtildeProperties} (a), $U\left( U^{-}\right) ^{\ast }U=Un_{1}\widetilde{%
H}U$. Hence (a) and (b) are equivalent. To finish the proof observe that 
\begin{gather*}
G=B\cup Bn_{1}B=\left( B\cup Bn_{1}B\right) n_{1}=Bn_{1}\cup
Bn_{1}Bn_{1}=Bn_{1}\cup BB^{-}= \\
=Un_{1}H\cup UU^{-}H\text{,}
\end{gather*}%
where the union on the r.h.s. is disjoint. Since the sets $\widetilde{H}$
and $n_{1}\widetilde{H}n_{1}$ are both contained in $H$, we have $Un_{1}%
\widetilde{H}\subseteq Un_{1}H$, and $UU^{-}\left( \left\{ 1\right\} \cup
n_{1}\widetilde{H}n_{1}\widetilde{H}\right) \subseteq UU^{-}H$. Since $%
G=Un_{1}H\cup UU^{-}H$ is a disjoint union, Lemma \ref{Lem_HtildeProperties}
(b) implies that $G=\left( UU^{-}\right) ^{2}$ if and only if $Un_{1}%
\widetilde{H}=Un_{1}H$ and $UU^{-}\left( \left\{ 1\right\} \cup n_{1}%
\widetilde{H}n_{1}\widetilde{H}\right) =UU^{-}H$. Thus, by Lemma \ref%
{Lem_HtildeProperties} (a), we get that (c) implies (b), and it is also
clear that (a) implies (c).
\end{proof}

\subsection{Groups with a $\protect\sigma $-setup}

Any $K\in {\mathcal{L}ie}$ has a split $BN$-pair $(H,U,N)$, where $U$ is a
Sylow $p$-subgroup for the defining characteristic $p$, descending from the
algebraic group $\overline{K}$ (see \cite[Theorem 2.3.4]{GLS1998number3}).
More precisely, if $\overline{T}\subseteq \overline{B}$ is a pair of $\sigma 
$-stable maximal torus and Borel subgroup of $\overline{K}$ then $B=%
\overline{B}\cap K$ and $N=N_{\overline{K}}(\overline{T})\cap K$ form a $BN$%
-pair for $K$ and if $\overline{U}$ is the unipotent radical of $\overline{K}
$, i.e., $\overline{B}=\overline{T}\ltimes \overline{U\text{,}}$ then $%
B=H\ltimes U$ where $H=\overline{T}\cap K$ and $U=\overline{U}\cap K$ is a
Sylow $p$-subgroup of $K$ (as in \cite[Section 3.4]{GLS1998number3}).

\begin{remark}
\label{remarks_lie_type}

1.) Some groups in ${\mathcal{L}ie}$ have split $BN$-pairs for different
primes $p$, e.g. $A_{1}(4)=A_{1}(5)$ (see \cite[Theorem 2.2.10]%
{GLS1998number3}).

2.) If $K\in {\mathcal{L}ie}(p)$ then $K_{s}\in {\mathcal{L}ie}(p)$.
Moreover if $K$ is universal then so is $K_{s}$ by \cite[Theorem 2.6.5.(f)]%
{GLS1998number3}.

3.) Note also that if $\overline{K}$ is universal (see \cite[Theorem 1.10.4]%
{GLS1998number3}) then, by a result of Steinberg (see \cite[Theorem 24.15]%
{MaTe2011linear}), $K_{u}=C_{\overline{K}}(\sigma )$ so $B$, $N$, $H$ and $U$
are the centralizers of $\sigma $ in $\overline{B}$, $\overline{N}$, $%
\overline{T}$ and $\overline{U}$ respectively.
\end{remark}

\begin{lemma}
\label{Lem_universal_rank_1} Let $K_{u}\in {\mathcal{L}ie}(p)$ be universal
of rank $1$ and let $U$ be a Sylow $p$-subgroup of $K_{u}$. Then $%
K_{u}=\left( UU^{-}\right) ^{2}$.
\end{lemma}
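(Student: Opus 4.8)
The plan is to reduce the claim to the general rank $1$ criterion of Lemma \ref{Lem_characterization_rank1} and then to verify its hypothesis by an explicit Gauss--Bruhat computation. First I would record that a universal rank $1$ group $K_u$ with a $\sigma$-setup carries, by Remark \ref{remarks_lie_type} and \cite[Theorem 2.3.4]{GLS1998number3}, a split $BN$-pair $(H,U,N)$ with $U$ a Sylow $p$-subgroup and Weyl group $W=\{1,s_1\}$. Thus $K_u$ lies exactly in the situation of Lemma \ref{Lem_characterization_rank1}, and by the equivalence (a)$\Leftrightarrow$(c) there it suffices to prove condition (a), namely $\widetilde{H}=H$: for every $h\in H$ the double coset $Un_1hU$ contains a nontrivial element $u^-\in(U^-)^{\ast}$.

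Equivalently, I want the ``torus part'' map $\mu\colon (U^-)^{\ast}\to H$, sending a nontrivial $u^-$ to the unique $h\in H$ with $Uu^-U=Un_1hU$ (well defined by the sharp form of the Bruhat decomposition in \cite[Theorem 2.5.14]{carter1985finite}, using $W=\{1,s_1\}$), to be surjective. To see how this goes, consider first the untwisted case $A_1$: a nontrivial lower-unipotent element $x_{-\alpha}(c)$ with $c\neq 0$ decomposes, via the Steinberg relation $x_{-\alpha}(-t^{-1})=x_{\alpha}(-t)\,w_{\alpha}(t)\,x_{\alpha}(-t)$ together with $w_{\alpha}(t)=h_{\alpha}(t)w_{\alpha}(1)$, so that its torus part is $h_{\alpha}(\lambda)$ with $\lambda=\lambda(c)$ ranging over all nonzero field elements as $c$ does. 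Because $K_u$ is universal the cocharacter lattice of $\overline{T}$ is the coroot lattice $\mathbb{Z}\alpha^{\vee}$, so $\lambda\mapsto h_{\alpha}(\lambda)$ already surjects onto $H=\overline{T}\cap K_u$, giving $\mu$ surjective and hence $\widetilde H=H$. For the twisted rank $1$ types $^2A_2$, $^2B_2$ and $^2G_2$, where $U$ is non-abelian, the same surjectivity is precisely a Gauss decomposition with prescribed semisimple part, and I would obtain it uniformly from the result of \cite{CEG2000gauss}: every prescribed $h\in H$ is realized as the torus part of some $u^-\in(U^-)^{\ast}$.

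The main obstacle is exactly this last, twisted verification. There $U$ is not a single root subgroup but a $p$-group with non-trivial centre, and $H$ is parametrized by a field or a norm-one subgroup determined by $\sigma$ rather than by $\mathbb{F}_q^{\times}$, so the naive $\mathrm{SL}_2$-style identity does not apply directly; this is the step that genuinely requires the input of \cite{CEG2000gauss}. The hypothesis that $K_u$ is \emph{universal} is essential here: it is what guarantees that the torus part sweeps out all of $H$ rather than a proper subgroup, which is what can fail for the adjoint version. Once surjectivity of $\mu$, equivalently $\widetilde H=H$, is established, condition (a) of Lemma \ref{Lem_characterization_rank1} holds, and its equivalence (a)$\Leftrightarrow$(c) yields $K_u=(UU^-)^2$, as required.
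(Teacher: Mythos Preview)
Your proposal is correct and follows essentially the same route as the paper: reduce to condition~(a) of Lemma~\ref{Lem_characterization_rank1}, verify it directly for $\mathrm{SL}_2$ via the explicit Bruhat/Steinberg identity, and invoke \cite{CEG2000gauss} for the twisted rank~$1$ types ${}^2A_2$, ${}^2B_2$, ${}^2G_2$. The only notable difference is that the paper spells out the short manipulation needed to extract condition~(a) from \cite[Proposition~4.1]{CEG2000gauss} (conjugating by $n_1^{-1}$ to land in $(U^{-})^{\ast}$), whereas you leave this as a direct appeal; you may want to make that step explicit. Your side remark that the surjectivity ``can fail for the adjoint version'' is not quite right---by the equivalence (a)$\Leftrightarrow$(c) and the surjection $K_u\to K_a$, condition~(a) in fact holds for the adjoint version as well---but this does not affect the argument for the universal case you are proving.
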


\begin{proof}
First note that since $K_{u}$ is universal, the corresponding algebraic
group $\overline{K}_{u}$ is universal (or simply connected in a different
terminology \cite[Definition 1.10.5]{GLS1998number3}). By Remark \ref%
{remarks_lie_type}.3, $K_{u}$ is a finite group of Lie type. The possible
types for rank $1$ are $A_{1}$, ${}^{2}A_{2}$, ${}^{2}B_{2}$ and $%
{}^{2}G_{2} $ (see for example \cite[Table 23.1]{MaTe2011linear}) and the
possibilities for $\overline{K}_{u}$ can be read off from \cite[Theorem
1.10.7]{GLS1998number3}.

Let $p$ be the defining characteristic of $K_{u}$. By \cite[\S 1.19]%
{carter1985finite}, we need to consider, for all powers $q$ of $p$, the
groups $\func{SL}_{2}(q)$, $\func{SU}_{3}(q^{2})$, ${}^{2}B_{2}(q^{2})$ if $%
p=2$ and $q^{2}=2^{2n-1}$ for some $n\geq 0$ and ${}^{2}G_{2}(q^{2})$ if $%
p=3 $ and $q^{2}=3^{2n-1}$ for some $n\geq 0$.

Now $K_{u}$ satisfies the assumptions of Lemma \ref%
{Lem_characterization_rank1}, so, in particular we use the notation of Lemma %
\ref{Lem_characterization_rank1}. For $K_{u}=\func{SL}_{2}(q)$ condition (a)
of the lemma is easily verified - for the calculation see \cite[\S 6.1]%
{carter1989simple}. For the remaining cases we use \cite[Proposition 4.1]%
{CEG2000gauss}. By this result, for every $h\in H$ there exists $y\in U$
such that $yn_{1}\in U^{-}hU=n_{1}^{-1}Un_{1}hU$. Multiplying by $n_{1}^{-1}$
on the left, and using $\left( n_{1}^{-1}\right) ^{2}\in H$, we obtain $%
n_{1}^{-1}yn_{1}\in Un_{1}\left( \left( n_{1}^{-1}\right) ^{2}h\right) U$.
Observe that $1\notin Un_{1}\left( \left( n_{1}^{-1}\right) ^{2}h\right) U$,
and hence $n_{1}^{-1}yn_{1}\in \left( U^{-}\right) ^{\ast }$. Moreover, as $%
h $ varies over $H$, so does $\left( n_{1}^{-1}\right) ^{2}h$. Hence,
condition (a) of Lemma \ref{Lem_characterization_rank1} holds for this case,
and the claim follows.
\end{proof}

\begin{remark}
Note that the groups denoted by $\func{SU}_{n}(q^{2})$ in \cite[\S 1.19]%
{carter1985finite} are denoted by $\func{SU}_{n}(q)$ in \cite[Example 21.2]%
{MaTe2011linear}. Note also that for the groups ${}^{2}B_{2}(2^{2n-1})$ the
universal and the adjoint versions are isomorphic (see \cite[\S 1.19]%
{carter1985finite}). Moreover since the center $Z(K_{u})$ lies in $C_{Z(%
\overline{K}_{u})}(\sigma )$ (see \cite[Corollary 24.13]{MaTe2011linear}) it
follows that $Z(K_{u})=1$ except if $K_{u}=\func{SL}_{2}(q)$ and $q$ is odd
(here $Z(K_{u})=Z_{2}$) or $K_{u}=\func{SU}_{3}(q^{2})$ and $3$ divides $q+1$
(here $Z(K_{u})=Z_{3}$). Excluding these exceptions, $K_{u}$ is isomorphic
to its adjoint version, i.e. $K_{u}\cong K_{a}$ and condition (a) of Lemma %
\ref{Lem_characterization_rank1} can be checked with the calculation in \cite%
[\S 13.7]{carter1989simple}.
\end{remark}

The next lemma extends an observation of \cite{VSS2011unitriangular} to the
split $BN$-pair setting.

\begin{lemma}
\label{Lem_HIntesrsectUU_Utrivially}If $G$ is a group with a split $BN$-pair 
$(H,U)$, then $H\cap UU^{-}U=\{1\}$.
\end{lemma}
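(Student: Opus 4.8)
The plan is to take an arbitrary $h\in H\cap UU^{-}U$ and show directly that $h=1$, relying on just two structural features of the split $BN$-pair: that $H$ normalizes $U$ (because $B=H\ltimes U$, which also gives $H\cap U=1$), and that $B\cap U^{-}=1$. The latter is \cite[Proposition 2.5.5(i)]{carter1985finite}, already invoked in the proof of Lemma \ref{Lem_HtildeProperties}, and it is precisely the observation of \cite{VSS2011unitriangular} that the lemma generalizes to the abstract split $BN$-pair setting.

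First I would write $h=u_{1}u^{-}u_{2}$ with $u_{1},u_{2}\in U$ and $u^{-}\in U^{-}$, and isolate the $U^{-}$-factor as $u^{-}=u_{1}^{-1}hu_{2}^{-1}$. The key manipulation is to absorb $h$ into the unipotent part: since $h\in H$ normalizes $U$, the element $hu_{2}^{-1}h^{-1}$ lies in $U$, and therefore
\begin{equation*}
u^{-}=u_{1}^{-1}\bigl(hu_{2}^{-1}h^{-1}\bigr)h\in U\cdot U\cdot H=UH=B.
\end{equation*}
Hence $u^{-}\in U^{-}\cap B=\{1\}$, so $u^{-}=1$. Then $h=u_{1}u_{2}\in U$, and since $H\cap U=1$ this forces $h=1$, which is exactly the claim.

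There is no genuine obstacle in this argument; the whole content is the two displayed containments $U^{-}\cap B=\{1\}$ and $H\cap U=\{1\}$. The only point that requires care is the direction in which $h$ is pushed past the unipotent factors, i.e.\ that one uses $H\leq N_{G}(U)$ to rewrite $u_{1}^{-1}hu_{2}^{-1}$ as an element of $UH$ rather than attempting to move $u^{-}$ out of $U^{-}$. Everything needed is already available from the split $BN$-pair axioms and the cited proposition, so the proof should be only a few lines.
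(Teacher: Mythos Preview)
Your argument is correct and essentially identical to the paper's: both isolate $u^{-}=u_{1}^{-1}hu_{2}^{-1}$, use $H\leq N_{G}(U)$ to recognize this element as lying in $HU=B$, invoke $B\cap U^{-}=\{1\}$ from \cite[Proposition 2.5.5(i)]{carter1985finite}, and finish with $H\cap U=\{1\}$. The only cosmetic difference is that the paper conjugates $u_{1}^{-1}$ by $h$ (writing $u_{1}^{-1}hu_{2}^{-1}=h(u_{1}^{-1})^{h}u_{2}^{-1}\in HU$) whereas you conjugate $u_{2}^{-1}$; the content is the same.
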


\begin{proof}
Let $h\in H\cap UU^{-}U$. Then $h\in u_{1}U^{-}u_{2}$, with $u_{1},u_{2}\in
U $. Equivalently, $u_{1}^{-1}hu_{2}^{-1}=h\left( u_{1}^{-1}\right)
^{h}u_{2}^{-1}\in U^{-}$. But $h\left( u_{1}^{-1}\right) ^{h}u_{2}^{-1}\in
B=HU$, and hence $h\left( u_{1}^{-1}\right) ^{h}u_{2}^{-1}\in B\cap
U^{-}=\left\{ 1\right\} $ (\cite[Proposition 2.5.5(i)]{carter1985finite}).
Using $H\cap U=\left\{ 1\right\} $ this gives $h=1$.
\end{proof}

\begin{proof}[\textbf{Proof of Theorem \protect\ref{Th_unipotent_Sylows}}]
We have to show that $G=(UU^{-})^{2}$ for each prime $p$ and each $G\in {%
\mathcal{L}ie}(p)$. Since $G$ satisfies the assumptions of Proposition \ref%
{reduction_rank1}, we can assume that $G$ is in ${\mathcal{L}ie}$ of rank $1$%
. Moreover, since there is a surjective homomorphism $\phi :K_{u}\rightarrow
K$ which maps unipotent Sylows onto unipotent Sylows, we can assume that $G$
is universal. A universal $G$ in ${\mathcal{L}ie}$ of rank $1$ satisfies $%
G=(UU^{-})^{2}$ by Lemma \ref{Lem_universal_rank_1}.

By Lemma \ref{Lem_HIntesrsectUU_Utrivially}, $H\cap UU^{-}U=\{1\}$ and so,
employing Lemma \ref{Lem_reduction_to_UV}, if $H\neq 1$ we must have $\gamma
_{\func{cp}}^{U}\left( G\right) >3$, and hence $G=(UU^{-})^{2}$ is a minimal
length $\func{cp}$-factorization of $G$ by $U$ (i.e., $\gamma _{\func{cp}%
}^{U}\left( G\right) =4$). On the other hand, if $H=1$ then $B=H\ltimes U=U$
and it follows (see \cite[Theorem 5]{CGLMS2014conjugates}) that $G=UU^{-}U$
(i.e., $\gamma _{\func{cp}}^{U}\left( G\right) =3$). Moreover $H=1$ if and
only if $U$ is self-normalizing, i.e., if $U$ is a Carter subgroup.
\end{proof}

\section{Solvable $\func{cp}$-factorizations\label%
{Sect_solvable_factorizations}}

In this section we prove Theorem \ref{Th_loglog}. The proof is based on
reducing the problem of finding an upper bound on $\gamma _{\func{cp}}^{%
\func{s}}(G)$ for a general finite\footnote{%
From here to the end of the paper, all groups are assumed to be finite
unless otherwise stated.} group $G$ to finding an upper bound on the minimal
length of a special kind of a solvable conjugate factorization of a simple
non-abelian group.

\begin{definition}
\label{Def_special_solvable_cp} A $\func{cp}$-factorization $G=A_{1}\cdots
A_{k}$ of a group $G$ by $A\leq G$ will be called a special solvable $\func{%
cp}$-factorization if the following conditions hold:

\begin{enumerate}
\item[(i)] $A$ is solvable.

\item[(ii)] $A$ is self-normalizing in $G$.

\item[(iii)] For any $\alpha \in Aut\left( G\right) $ there exists $g\in G$
such that $A^{\alpha }=A^{g}$.
\end{enumerate}
\end{definition}

The next lemma shows the existence of special solvable $\func{cp}$%
-factorizations for any finite group $G$.

\begin{lemma}
\label{Lem_ExistenceOfSpecialSolFacs}Let $G$ be a group, $p$ a prime, and
let $P$ be a Sylow $p$-subgroup of $G$. Then $A:=N_{G}\left( P\right) $
satisfies properties (ii) and (iii) in Definition \ref%
{Def_special_solvable_cp}, and $G$ is a product of some conjugates of $A$ in 
$G$. If, in addition, $A$ is solvable then this product is a special
solvable conjugate factorization of $G$. Furthermore, if $p=2$ then $A$ is
solvable so any group $G$ has at least one special solvable conjugate
factorization.
\end{lemma}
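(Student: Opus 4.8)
The plan is to establish the four assertions in turn: the first three are elementary consequences of Sylow theory, while the last one alone draws on the Feit--Thompson odd order theorem.

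For property (ii) I would first note that $P$ is the \emph{unique} Sylow $p$-subgroup of $A=N_{G}(P)$: by definition $P\trianglelefteq A$, and $P$ is a Sylow $p$-subgroup of $A$ because it is already one in the larger group $G$. Now take any $g\in N_{G}(A)$. Then $P^{g}\leq A^{g}=A$, so $P^{g}$ is a Sylow $p$-subgroup of $A$; by uniqueness $P^{g}=P$, whence $g\in N_{G}(P)=A$. This gives $N_{G}(A)=A$, i.e. $A$ is self-normalizing.

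For property (iii), fix $\alpha \in Aut(G)$. Since $\alpha$ preserves orders, $P^{\alpha}$ is again a Sylow $p$-subgroup of $G$, so by Sylow's theorem $P^{\alpha}=P^{g}$ for some $g\in G$. The key bookkeeping is that both automorphisms and inner conjugations carry normalizers to normalizers: one checks directly that $A^{\alpha}=N_{G}(P)^{\alpha}=N_{G}(P^{\alpha})$ and likewise $A^{g}=N_{G}(P^{g})$. Since $P^{\alpha}=P^{g}$ these two subgroups coincide, giving $A^{\alpha}=A^{g}$ as required. To see that $G$ is a product of conjugates of $A$, I would show $A^{G}=G$ by a Frattini argument: set $N:=A^{G}\trianglelefteq G$; as $P\leq A\leq N$ and $P$ is a Sylow $p$-subgroup of $G$, it is a Sylow $p$-subgroup of $N$, so the Frattini argument yields $G=N\cdot N_{G}(P)=N\cdot A=N$, the final equality because $A\leq A^{G}=N$. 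Hence $A^{G}=G$, and by the elementary fact recalled in the introduction (that $A^{G}$ is always a product of finitely many conjugates of $A$), $G$ is itself such a product. When $A$ is solvable this product satisfies condition (i) in addition to (ii) and (iii), so it is a special solvable $\func{cp}$-factorization.

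The final and only non-elementary assertion is that $A=N_{G}(P)$ is solvable when $p=2$. Here I would use that $P\trianglelefteq A$ is a Sylow $2$-subgroup of $A$, so the quotient $A/P$ has odd order; by the Feit--Thompson theorem $A/P$ is solvable. As $P$ is a $2$-group, hence solvable, the extension $A$ of $P$ by $A/P$ is solvable. Taking $p=2$ then produces a special solvable $\func{cp}$-factorization of an arbitrary finite $G$. The main (indeed the only) substantive obstacle is this last step: it is precisely where the odd order theorem enters, and without it the solvability of the normalizer of a Sylow $2$-subgroup would not be available.
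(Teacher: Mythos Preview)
Your proof is correct and follows essentially the same route as the paper: properties (ii) and (iii) via standard Sylow consequences (which the paper simply cites from Rose), the Frattini argument to get $A^{G}=G$, and the Odd Order Theorem for the $p=2$ case. You have just spelled out in detail what the paper leaves to a reference.
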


\begin{proof}
It is well-known that as a consequence of Sylow's theorems, properties (ii)
and (iii) in Definition \ref{Def_gamma_sscp} are satisfied by any Sylow
normalizer subgroup of $G$ (see, for instance, \cite[5.13, 5.14]%
{Rose1978groups}). In order to show that $G$ is a product of conjugates of $%
A $ it suffices to prove that $G=$ $A^{G}$. Observe that $P$ is a Sylow $p$%
-subgroup of $A^{G}$ and clearly $A^{G}\trianglelefteq G$. Hence, by
Frattini's argument, $G=A\left( A^{G}\right) =A^{G}$. Finally, if $p=2$ then 
$A$ is solvable by the Odd Order Theorem.
\end{proof}

\begin{definition}
\label{Def_gamma_sscp}For a finite group $G$ we denote by $\gamma _{\func{cp}%
}^{\func{ss}}(G)$ the minimal length of a special solvable $\func{cp}$%
-factorization of $G$. For a prime $p$ we let $\gamma _{\func{cp}}^{p}(G)$
denote the minimal length of a $\func{cp}$-factorization of $G$ whose
factors are conjugates of a solvable normalizer in $G$ of a Sylow $p$%
-subgroup of $G$, if such a factorization exists, or $\gamma _{\func{cp}%
}^{p}(G)=\infty $ otherwise.
\end{definition}

Note that for every prime $p$ we have $\gamma _{\func{cp}}^{\func{ss}%
}(G)\leq \gamma _{\func{cp}}^{p}(G)$ by Lemma \ref%
{Lem_ExistenceOfSpecialSolFacs}.

\subsection{Special solvable factorizations of simple groups\label%
{SubSection_SpecialSolvable}}

Here we obtain an upper bound on $\gamma _{\func{cp}}^{\func{ss}}(G)$, where 
$S$ is a simple non-abelian group. We discuss separately simple groups of
Lie type, alternating groups and simple sporadic groups, and then combine
the various results in Theorem \ref{Th_gammaSpecialForSimple}.

\begin{lemma}
\label{Lem_ss_Lie} If $S$ is a simple group of Lie type of characteristic $p$
then $\gamma _{\func{cp}}^{\func{ss}}(S)=\gamma _{\func{cp}}^{p}(S)=3$.
\end{lemma}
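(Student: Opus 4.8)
The plan is to prove that for a simple group $S$ of Lie type in characteristic $p$, the normalizer $A:=N_S(P)$ of a Sylow $p$-subgroup $P$ satisfies $\gamma_{\func{cp}}^p(S)=3$, and that $3$ is the minimal possible value of $\gamma_{\func{cp}}^{\func{ss}}(S)$ as well. The first observation is that in the split $BN$-pair language of Section \ref{Section_uniSylow}, the Sylow $p$-subgroup is $U$ and its normalizer in $S$ is exactly the Borel subgroup $B=H\ltimes U$. Indeed, $U$ is normal in $B$ and $N_S(U)=B$ by the standard theory of groups with a split $BN$-pair. So the factorization I am looking for is a $\func{cp}$-factorization of $S$ by the Borel subgroup $B$.

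First I would recall that the Borel subgroup is solvable (it is $H\ltimes U$ with $U$ a $p$-group and $H$ abelian), so condition (i) of Definition \ref{Def_special_solvable_cp} holds; conditions (ii) and (iii) hold by Lemma \ref{Lem_ExistenceOfSpecialSolFacs}. Hence any $\func{cp}$-factorization of $S$ by $B$ is automatically a special solvable one, which gives the inequality $\gamma_{\func{cp}}^{\func{ss}}(S)\le\gamma_{\func{cp}}^p(S)$ and reduces everything to computing $\gamma_{\func{cp}}^B(S)$. The key input is the result cited in the introduction from \cite{CGLMS2014conjugates}: every group with a $BN$-pair and finite Weyl group satisfies $\gamma_{\func{cp}}(S)=3$, realized by the Borel subgroup. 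Concretely, one has the factorization $S=Bn_0B\cdot\text{(something)}$; more cleanly, the standard fact is that $S=Bn_0B\cup B$ and that three conjugates of $B$ suffice, i.e. $S=B\,B^{n_0}\,B=BB^-B$. I would quote this directly to get $\gamma_{\func{cp}}^B(S)\le 3$.

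For the lower bound, I would invoke the general fact noted in the introduction that $\gamma_{\func{cp}}^A(S)\ge 3$ whenever $A<S$ (\cite[Lemma 6]{GL2014conjugates}), since $B$ is a proper subgroup of the simple group $S$. Combining, $\gamma_{\func{cp}}^B(S)=3$, hence $\gamma_{\func{cp}}^p(S)=3$ (as the Sylow normalizer is $B$), and therefore $\gamma_{\func{cp}}^{\func{ss}}(S)\le 3$. The matching lower bound $\gamma_{\func{cp}}^{\func{ss}}(S)\ge 3$ follows because any special solvable factorization is in particular a $\func{cp}$-factorization by a proper (solvable) subgroup $A<S$, so again $\gamma_{\func{cp}}^A(S)\ge 3$ by \cite[Lemma 6]{GL2014conjugates}. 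This forces all three quantities to equal $3$.

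The main conceptual point — and the only place requiring care — is the identification $N_S(U)=B$ together with the solvability of $B$, and the verification that the abstract result $\gamma_{\func{cp}}(S)=3$ from \cite{CGLMS2014conjugates} is genuinely realized by the \emph{Borel} subgroup rather than some other solvable subgroup, so that it coincides with the Sylow $p$-normalizer appearing in the definition of $\gamma_{\func{cp}}^p$. I expect no substantial obstacle beyond bookkeeping, since all the hard analytic content (the three-fold factorization $S=BB^-B$) is already available from the cited companion paper; the present lemma is essentially a translation of that result into the $\gamma_{\func{cp}}^{\func{ss}}$ and $\gamma_{\func{cp}}^p$ notation. One subtlety to flag is the possibility of exceptional small groups or the Tits group where the generic $BN$-pair argument might need a separate check, but since $S$ is assumed to lie in $\mathcal{L}ie(p)$ with a genuine split $BN$-pair, the uniform argument applies.
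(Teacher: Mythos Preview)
Your proposal is correct and follows essentially the same approach as the paper: identify the Sylow $p$-normalizer with the Borel subgroup $B$, note that $B$ is solvable, and invoke \cite[Theorem 3]{CGLMS2014conjugates} to get $\gamma_{\func{cp}}^{B}(S)=3$, with the lower bound coming from the general fact $\gamma_{\func{cp}}^{A}(S)\ge 3$ for proper $A$. The paper's proof is a single sentence doing exactly this; your version simply spells out the implicit inequalities $\gamma_{\func{cp}}^{\func{ss}}(S)\le\gamma_{\func{cp}}^{p}(S)$ (already recorded after Definition~\ref{Def_gamma_sscp}) and the matching lower bound.
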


\begin{proof}
$S$ is a group with a $BN$-pair, where $B$, the Borel subgroup of $S$ is
solvable and is the normalizer of a Sylow $p$-subgroup of $S$. Hence $\gamma
_{\func{cp}}^{p}(S)=3$ by \cite[Theorem 3]{CGLMS2014conjugates}.
\end{proof}

\begin{lemma}
\label{Lem_ss_An} If $S\cong A_{n}$ for $n\geq 5$ then $\gamma _{\func{cp}}^{%
\func{ss}}(S)<12\log _{2}(n)$.
\end{lemma}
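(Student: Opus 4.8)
The plan is to bound the a priori smaller quantity $\gamma _{\func{cp}}^{2}(S)$ and then invoke the inequality $\gamma _{\func{cp}}^{\func{ss}}(S)\leq \gamma _{\func{cp}}^{2}(S)$ recorded just after Definition~\ref{Def_gamma_sscp}. Concretely, fix a Sylow $2$-subgroup $P$ of $S=A_{n}$ and set $A:=N_{S}(P)$. By Lemma~\ref{Lem_ExistenceOfSpecialSolFacs} (applied with $p=2$) the subgroup $A$ is solvable by the Odd Order Theorem, is self-normalizing, has an $\operatorname{Aut}(S)$-invariant conjugacy class, and $S$ is a product of conjugates of $A$; hence \emph{every} $\func{cp}$-factorization of $S$ by conjugates of $A$ is automatically a special solvable one, and it remains only to control how many conjugates are needed. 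Since $P\leq A$, each conjugate of $P$ lies in a conjugate of $A$, so it suffices to exhibit a \emph{fixed} sequence of conjugates of $P$ whose set-product is all of $S$. As a sanity check on the target, the estimates $\log _{2}\left\vert A\right\vert \geq \log _{2}\left\vert P\right\vert =n-s_{2}(n)-1\geq n-\log _{2}n-2$ (Legendre's formula, with $s_{2}(n)$ the binary digit-sum) and $\log _{2}\left\vert S\right\vert =\log _{2}(n!/2)<n\log _{2}n$ show that $\log _{2}\left\vert S\right\vert /\log _{2}\left\vert A\right\vert$ is $\Theta (\log _{2}n)$, so $\Theta (\log _{2}n)$ factors are genuinely necessary and the whole game is to match this up to the constant $12$.

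To produce such a short fixed factorization I would use a rearrangeable permutation network. When $n=2^{m}$ the group $P$ is the iterated wreath product $C_{2}\wr \cdots \wr C_{2}$, i.e. the automorphism group of the rooted binary tree on $n$ leaves, and its bottom layer $E\cong (C_{2})^{n/2}$ is the group of independent swaps of the $n/2$ leaf-pairs. A Bene\v{s} (butterfly) network realizes every permutation of the leaves as a product $e_{1}e_{2}\cdots e_{2m-1}$ in which the $j$-th factor ranges over a \emph{fixed} elementary abelian group $E_{j}$ of stage-$j$ switches, and each $E_{j}$ is a single $S_{n}$-conjugate $E^{h_{j}}$ of $E\leq P$ (consecutive stages pair positions differing in a fixed bit, and any two such pairings are conjugate by a bit-permutation). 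Thus $S_{n}=E_{1}\cdots E_{2m-1}\subseteq A^{h_{1}}\cdots A^{h_{2m-1}}$ for a fixed choice of the $h_{j}$, giving at most $2m-1=2\log _{2}n-1$ conjugates. Passing from $S_{n}$ to $A_{n}$, and from $E$ up to $P\leq A=N_{A_{n}}(P)$, forces one to track the parity of each stage, which I would arrange by working inside the even parts of the $E_{j}$ (or by absorbing a single correcting factor).

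For general $n$ I would expand $n$ in binary, so that $P$ becomes a direct product of iterated wreath products indexed by the $1$-bits of $n$, and use a Clos/Waksman-type rearrangeable network adapted to this mixed structure; such a network still has $O(\log _{2}n)$ stages. The hard part, and the reason the stated constant is the loose value $12$ rather than the $2$ suggested above, is precisely this bookkeeping: one must realize each network stage as a fixed conjugate of the \emph{single} subgroup $A=N_{A_{n}}(P)$ uniformly in $n$ while respecting the alternating-group parity constraint, and the crudest uniform way of doing so costs a bounded factor over the optimal $2\log _{2}n$. The finitely many small cases (e.g. $n=5,6$) are checked directly, where the inequality is very far from tight because $\gamma _{\func{cp}}^{2}(A_{n})$ is a small absolute constant there. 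Alternatively, one could bypass the explicit routing entirely and quote a general product-decomposition theorem for finite simple groups, asserting that $S$ is a product of $O(\log \left\vert S\right\vert /\log \left\vert A\right\vert )$ conjugates of any nontrivial subgroup $A$; this yields the same $\Theta (\log _{2}n)$ bound immediately, at the price of a less explicit constant.
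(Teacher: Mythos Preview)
Your proposal follows a genuinely different route from the paper's, but as written it has real gaps that you yourself flag as ``the hard part'' without resolving.

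The paper does not use routing networks. It first bounds the number of Sylow $2$-subgroups needed to factor $S_n$ by proving two recursive inequalities: $f(n+1)\leq f(n)+2$ (obtained by showing that two well-chosen Sylow $2$-subgroups of $S_{n+1}$ contain a transversal of a point stabilizer $S_n$) and, for even $n$, $f(n)\leq 2+f(n/2)$ (via the decomposition $S_n=BAB$ with $A\cong (S_{n/2})^2$ and $B\cong (S_2)^{n/2}$, taken from Ab\'ert). Induction on these gives $f(n)<4\log_2 n$. The passage from $S_n$ to $A_n$ is then handled, not by parity bookkeeping inside the factors, but by embedding copies of $S_{n-2}$ into $A_n$ as setwise stabilizers of $2$-element subsets and proving the triple-product identity $A_n=H_1H_2H_1$ with $H_i\cong S_{n-2}$; this is exactly where the factor $3$ (and hence the constant $12=3\cdot 4$) comes from.

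Your Bene\v{s}-network idea is sound for $S_{2^m}$ and would indeed give roughly $2\log_2 n$ factors there, potentially improving on the paper's $4\log_2 n$. But the two places where you write ``I would arrange'' are precisely the substantive steps. First, the stage groups $E_j$ contain odd permutations, so replacing each $E_j$ by $E_j\cap A_n$ or by a Sylow $2$-subgroup of $A_n$ containing it is not automatic: you must check that the even parts still route every even permutation, or exhibit an explicit correction that does not inflate the count. Second, for general $n$ you invoke a Clos/Waksman network ``adapted to this mixed structure'' without saying how the stages are realized as conjugates of a \emph{single} fixed subgroup of $A_n$, or deriving any constant at all. The alternative you mention---quoting a black-box product-decomposition theorem for simple groups---would give the result only with an inexplicit constant, which defeats the point of the lemma. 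As it stands, the proposal is a plausible plan rather than a proof; the paper's recursion-plus-triple-product argument, while cruder in constant, is complete and self-contained.
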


\begin{proof}
We first show that the symmetric group $S_{n}$ is a product of less than $%
4\log _{2}(n)$ Sylow $2$-subgroups, adjusting the ideas of the proof of \cite%
[Theorem 2]{Abert2002abelian} to our needs. For any positive integer $n$ set 
$\Omega _{n}:=\left\{ 1,2,...,n\right\} $. Denote the minimal length of a $%
\func{cp}$-factorization of $G$ whose factors are Sylow $2$-subgroups, by $%
f(n)$. First we show that $f(n+1)\leq f(n)+2$. Let $A\cong S_{n}$ be the
point stabilizer of $1$, with respect to the natural action of $S_{n+1}$ on $%
\Omega _{n+1}$. Then $A$ is a product of $f(n)$ Sylow $2$-subgroups of $A$
each of which is a subgroup of a Sylow $2$-subgroup of $S_{n+1}$. Next we
prove that there exist two Sylow $2$-subgroups, $P$ and $Q$ of $S_{n+1}$,
such that $PQ$ contains elements $g_{1}=1_{S_{n+1}},g_{2},...,g_{n+1}$
satisfying $\left( 1\right) g_{i}=i$ for each $1\leq i\leq n+1$ ($\left(
1\right) g_{i}$ stands for the image of $1\in \Omega _{n+1}$ under the
action of $g_{i}\in S_{n+1}$). Note that a subset $\left\{
g_{1},...,g_{n+1}\right\} $ of $S_{n+1}$ whose elements satisfy the last
condition is a right transversal of $A$ in $S_{n+1}$, for if $i\neq j$ then $%
\left( 1\right) g_{i}g_{j}^{-1}\neq 1$, implying $g_{i}g_{j}^{-1}\notin A$.
Clearly, if $PQ$ contains a right transversal of $A$ in $S_{n+1}$, we have $%
APQ=S_{n+1}$, and $f(n+1)\leq f(n)+2$ follows.

Let $k$ be the unique integer satisfying $2^{k}\leq n+1<2^{k+1}$. We can
choose $P$ to be a Sylow $2$-subgroup of $S_{n+1}$ containing $\left\langle
\left( 1,...,2^{k}\right) \right\rangle $, and $Q$ a Sylow $2$-subgroup of $%
S_{n+1}$ containing $\left\langle \left(
n-2^{k}+2,...,n+1\right)\right\rangle $. These two cyclic subgroups act
transitively on their supports, and their supports have at least one point
in common. Hence $PQ$ contains a subset $\left\{ g_{1},...,g_{n+1}\right\} $
having the desired property.

Next we show that if $n$ is even then $f(n)\leq 2f(2)+f(n/2)$. In this case, 
$\Omega _{n}$ is in bijection with the set $\widetilde{\Omega }_{n}:=\left\{
1,2\right\} \times \Omega _{n/2}$. The natural action of $S_{n}$ on $\Omega
_{n}$ induces an action of $S_{n}$ on $\widetilde{\Omega }_{n}$. Let $A$ be
the subgroup consisting of all $g\in S_{n}$ such that for any $\left(
a,b\right) \in \widetilde{\Omega }_{n}$ we have $\left( a,b\right) g=\left(
a,x\right) $ for some $x\in \Omega _{n/2}$ and similarly, $B$ is the
subgroup of $S_{n}$ preserving the second coordinate of the $\widetilde{%
\Omega }_{n}$ element. Then, $A\cong {(S_{n/2})}^{2}$ and $B\cong {(S_{2})}%
^{n/2}$. By \cite[Lemma 4]{Abert2002abelian}, $S_{n}=BAB$. This gives $%
f(n)\leq 2f(2)+f(n/2)$.

Using these two inequalities we prove $f\left( n\right) <4\log _{2}(n)$ by
induction. If $n$ is even then 
\begin{equation*}
f(n)\leq 2+f(n/2)<2+4\log _{2}(n/2)<4\log _{2}(n)\text{.}
\end{equation*}
If $n$ is odd, then 
\begin{equation*}
f(n)\leq 2+f(n-1)\leq 4+f((n-1)/2)<4+4\log _{2}((n-1)/2)<4\log _{2}(n)\text{.%
}
\end{equation*}

Next we show that for $n\geq 6$ the group $A_{n}$ is a product of at most $%
12\log _{2}(n)$ Sylow $2$-subgroups. The group $A_{n}$ acts transitively on $%
P_{2}\left( \Omega _{n}\right) $ the set of all $n(n-1)/2$ subsets of $%
\Omega _{n}$ of size $2$. One can check that the stabilizer of a subset of
size $2$ of $\Omega _{n}$ is isomorphic to $S_{n-2}$. Let $H_{1}$ and $H_{2}$
be the stabilizers of $\{1,2\}$ and $\{n-1,n\}$ respectively. We claim that $%
A_{n}=H_{1}H_{2}H_{1}$. Notice that this claim together with our previous
claim that $S_{n}$ is a product of less than $4\log _{2}(n)$ Sylow $2$%
-subgroups, finish the proof. We have $H_{2}=H_{1}^{g}$ with $g=\left(
1,n-1\right) \left( 2,n\right) $. By \cite[Theorem 1 part 2 (ii)]%
{CGLMS2014conjugates} it is sufficient to show that $\{1,2\}H_{2}$
intersects every $H_{1}$ orbit $O$ on $P_{2}\left( \Omega _{n}\right) $. Let 
$\{i,j\}\in O$ be arbitrary. If $\{i,j\}\cap \{n-1,n\}=\emptyset $, then
there exists an $h\in H_{2}$ so that $\{1,2\}h=\{i,j\}$. On the other hand,
if $\{i,j\}\cap \{n-1,n\}\not=\emptyset $ then, since $n\geq 6$, there
exists $h_{1}\in H_{1}$ so that $\{i,j\}h_{1}\cap \{n-1,n\}=\emptyset $, and
so, since $\{i,j\}h_{1}\in O$ we reduce to the previous case.

Finally, $\gamma _{\func{cp}}^{\func{ss}}(A_{5})=3$ by Lemma \ref{Lem_ss_Lie}
since $A_{5}\cong PSL\left( 2,4\right) $ is of Lie type in characteristic $2$%
. For $n\geq 6$ we have shown that $A_{n}$ is a product of at most $12\log
_{2}(n)$ Sylow $2$-subgroups, hence $\gamma _{\func{cp}}^{2}(A_{n})$ exists
and satisfies $\gamma _{\func{cp}}^{2}(A_{n})<12\log _{2}(n)$. The claim of
the lemma follows.
\end{proof}

\begin{lemma}
\label{Lem_ss_sporadic} If $S$ is a sporadic simple group or the Tits group
then upper bounds on $\gamma _{\func{cp}}^{\func{ss}}(S)$ are given in the
Appendix, in Table \ref{TblSporadics}, under the column heading $\gamma _{%
\func{cp}}^{p}(S)\leq $. It follows that if $S$ is a sporadic simple group
or the Tits group or a simple group of Lie type then $\gamma _{\func{cp}}^{%
\func{ss}}(S)<4.84\log _{2}\log _{2}|S|$.
\end{lemma}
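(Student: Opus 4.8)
The plan is to treat the lemma's two assertions by completely different methods. The first assertion is that Table~\ref{TblSporadics} lists correct upper bounds on $\gamma_{\func{cp}}^{\func{ss}}(S)$ for each of the finitely many groups under consideration (the sporadic simple groups and the Tits group), and this I would establish by an explicit, largely computational, case analysis. For a fixed such $S$ I would choose a prime $p$ for which the Sylow normalizer $A:=N_S(P)$ is solvable — the choice $p=2$ always works by the Odd Order Theorem (Lemma~\ref{Lem_ExistenceOfSpecialSolFacs}), but a different prime often yields a shorter factorization — and then produce a $\func{cp}$-factorization $S=A^{x_1}\cdots A^{x_k}$ of some concrete length $k$. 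Because $\gamma_{\func{cp}}^{\func{ss}}(S)\leq\gamma_{\func{cp}}^{p}(S)$ (the remark following Definition~\ref{Def_gamma_sscp}), each such $k$ is a valid upper bound on $\gamma_{\func{cp}}^{\func{ss}}(S)$, which is precisely what is tabulated under the heading $\gamma_{\func{cp}}^{p}(S)\leq$.

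To locate a factorization of length $k$ I would use the standard reformulation that $S=A^{x_1}\cdots A^{x_k}$ holds for some $x_i$ if and only if $S=Ag_1A\cdots g_{k-1}A$ holds for some $g_1,\dots,g_{k-1}$ (as in the proof of Lemma~\ref{Lem_reduction_to_UV}), and then search for double-coset representatives $g_1,\dots,g_{k-1}$ for which the product set exhausts $S$, certifying exhaustion by a cardinality comparison with $|S|$. In a convenient faithful permutation or matrix representation this search is routine for the smaller groups, and short factorizations (often of length $3$, via the orbit-intersection criterion of \cite{CGLMS2014conjugates}) can frequently be found directly. For the large sporadic groups a brute-force covering search is hopeless, so I expect the real work to lie in bounding $\gamma_{\func{cp}}^{p}(S)$ indirectly — for instance by factoring through a chain of overgroups, or by using maximal subgroups together with permutation characters to control the growth of the double-coset products. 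Certifying these indirect bounds, especially for the Monster, is the step I anticipate as the main obstacle.

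The second assertion is then a short arithmetic deduction. If $S$ is of Lie type, Lemma~\ref{Lem_ss_Lie} gives $\gamma_{\func{cp}}^{\func{ss}}(S)=3$; since every simple group of Lie type has order at least $|A_5|=60$, we get $4.84\log_2\log_2|S|\geq 4.84\log_2\log_2 60>12>3$, so the desired inequality holds with ample margin. For the sporadic groups and the Tits group I would simply compare, entry by entry, the tabulated upper bound on $\gamma_{\func{cp}}^{\func{ss}}(S)$ against $4.84\log_2\log_2|S|$ using the known order $|S|$. The universal constant $4.84$ is calibrated to be just large enough for this finite collection of comparisons: it exceeds the largest value of the ratio $\gamma_{\func{cp}}^{p}(S)/\log_2\log_2|S|$ occurring across the table, so once the table is in hand the verification reduces to finitely many numerical checks.
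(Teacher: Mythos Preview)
Your plan matches the paper's approach: both establish the table case by case via $\gamma_{\func{cp}}^{\func{ss}}(S)\leq\gamma_{\func{cp}}^{p}(S)$ for a well-chosen prime, and both deduce the constant $4.84$ by maximizing the ratio across the finitely many entries (the paper records that the maximum occurs at $HN$). Where you are deliberately vague --- the ``factoring through a chain of overgroups'' and ``permutation characters'' you anticipate for the large groups --- the paper is specific. The overgroup step is the multiplicative inequality $\gamma_{\func{cp}}^{p}(S)\leq\gamma_{\func{cp}}^{A}(S)\cdot\gamma_{\func{cp}}^{p}(A)$ for any $A\leq S$ containing a Sylow $p$-subgroup (stated and proved as Lemma~\ref{Lem_ss_SporadicTools}(a)); the factor $\gamma_{\func{cp}}^{A}(S)$ is then shown to equal $3$ not by a direct covering search but by computing the structure constants of the double-coset Hecke algebra from a multiplicity-free permutation character, using the \texttt{mfer} database of Breuer and M\"uller. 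The remaining factor $\gamma_{\func{cp}}^{p}(A)$ is typically $3$ by Lemma~\ref{Lem_ss_Lie} (or Theorem~\ref{Th_unipotent_Sylows}) when $A$ is close to Lie type, or is bounded recursively from earlier table entries; this gives $\gamma_{\func{cp}}^{p}(S)\leq 9$ in most cases and handles even $B$ and $M$ (via $A=2.{}^{2}E_{6}(2).2$ and $A=2.B$) without any brute-force search in a faithful representation.
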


\begin{proof}
The deduction of the upper bounds in Table \ref{TblSporadics} uses several
ingredients. The first one is the detailed information about the maximal
subgroups of the sporadic simple groups which is available in \cite{ATLAS}.
A second ingredient is a basic inequality which relates $\gamma _{\func{cp}%
}^{p}(G)$ to $\gamma _{\func{cp}}^{p}(A)$ for $A\leq G$. This and other
useful relations are summarized in the following lemma.

\begin{lemma}
\label{Lem_ss_SporadicTools}Let $G$ be a group, and $p$ a prime divisor of $%
\left\vert G\right\vert $. Let $P$ be a Sylow $p$-subgroup of $G$.

\begin{enumerate}
\item[(a)] Suppose that $A\leq $ $G$ contains $P$, and that $\gamma _{\func{%
cp}}^{A}(G)$, $\gamma _{\func{cp}}^{p}(G)$ and $\gamma _{\func{cp}}^{p}(A)$
all exist. Then $\gamma _{\func{cp}}^{p}(G)\leq \gamma _{\func{cp}%
}^{A}(G)\gamma _{\func{cp}}^{p}(A)$.

\item[(b)] If $G$ is an almost simple group with socle $S$, $\left\vert
G:S\right\vert $ is not divisible by $p$ and $\gamma _{\func{cp}}^{p}(S)$
exists, then $\gamma _{\func{cp}}^{p}(G)\leq \gamma _{\func{cp}}^{p}(S)$.

\item[(c)] Let $N\trianglelefteq $ $G$. If $N\leq N_{G}\left( P\right) $ is
solvable then $\gamma _{\func{cp}}^{p}(G)=\gamma _{\func{cp}}^{p}(G/N)$.

\item[(d)] If $P$ is non-normal (e.g., $G$ is simple) then $N_{G}\left(
P\right) $ is solvable if and only if for each maximal subgroup $M$ of $G$
such that $\left\vert P\right\vert $ divides $\left\vert M\right\vert $, the
normalizer of a Sylow $p$-subgroup of $M$ is solvable.

\item[(e)] If $G$ is a product of $n$ arbitrary $p$-subgroups, and $%
N_{G}\left( P\right) $ is solvable then $\gamma _{\func{cp}}^{p}(G)\leq n$.

\item[(f)] If $G$ has a normal subgroup $N$ such that $N$ is a product of $n$
arbitrary $p$-subgroups, $G/N$ is a $p$-group, and $N_{G}\left( P\right) $
is solvable, then $\gamma _{\func{cp}}^{p}(G)\leq n$.
\end{enumerate}
\end{lemma}

\begin{proof}
(a) By assumption, $A$ is a product of $\gamma _{\func{cp}}^{p}(A)$ $A$%
-conjugates of $N_{A}\left( P\right) $, and $G$ is a product of $\gamma _{%
\func{cp}}^{A}(G)$ $G$-conjugates of $A$. Hence $G$ is a product of $\gamma
_{\func{cp}}^{A}(G)\gamma _{\func{cp}}^{p}(A)$ $G$-conjugates of $%
N_{A}\left( P\right) $. The claim now follows from $N_{A}\left( P\right)
\leq N_{G}\left( P\right) $.

(b) Since $S\trianglelefteq G$, we have that $S\cap P$ is a Sylow $p$%
-subgroup of $S$, and Since $\left\vert G:S\right\vert $ is not divisible by 
$p$ we get $P=S\cap P$. In \cite[Lemma 14]{GL2014conjugates}, take $m=1$, $%
X=G$ and $U=N_{G}\left( P\right) $. Then $US=N_{G}\left( P\right) S=G$ by
the Frattini's argument, and $U\cap S=$ $N_{S}\left( P\right) $. Since $%
1<N_{S}\left( P\right) <S$ we can deduce from \cite[Lemma 14]%
{GL2014conjugates}, that $G$ is a product of $h=\gamma _{\func{cp}}^{p}(S)$
conjugates of $N_{G}\left( N_{S}\left( P\right) \right) $. Now, because $%
S\trianglelefteq G$, $U=N_{G}\left( P\right) $ normalizes $U\cap S=$ $%
N_{S}\left( P\right) $, and hence $N_{G}\left( P\right) \leq N_{G}\left(
N_{S}\left( P\right) \right) $. Using $N_{G}\left( P\right) S=G$ and
Dedekind's argument we get:%
\begin{gather*}
N_{G}\left( N_{S}\left( P\right) \right) =N_{G}\left( N_{S}\left( P\right)
\right) \cap \left( N_{G}\left( P\right) S\right) =N_{G}\left( P\right)
\left( N_{G}\left( N_{S}\left( P\right) \right) \cap S\right) = \\
N_{G}\left( P\right) N_{S}\left( N_{S}\left( P\right) \right) =N_{G}\left(
P\right) N_{S}\left( P\right) =N_{G}\left( P\right) \text{. }
\end{gather*}%
Therefore $G$ is a product of $\gamma _{\func{cp}}^{p}(S)$ conjugates of $%
N_{G}\left( P\right) $. Finally note that the existence of $\gamma _{\func{cp%
}}^{p}(S)$ implies the solvability of $N_{S}\left( P\right) $ which implies,
using Schreier's Conjecture, the solvability of $N_{G}\left( P\right) $. The
claim follows.

(c) For each $A\leq G$ set $\overline{A}:=AN/N$. Then, in general (without
assuming $N\leq N_{G}\left( P\right) $) we have $\overline{N_{G}\left(
P\right) }=N_{\overline{G}}\left( \overline{P}\right) $ (\cite[3.2.8]%
{KurzUndStell}). If $N\leq N_{G}\left( P\right) $ is solvable then $%
\overline{N_{G}\left( P\right) }=N_{G}\left( P\right) /N=N_{\overline{G}%
}\left( \overline{P}\right) $, and $N_{G}\left( P\right) $ is solvable if
and only if $N_{\overline{G}}\left( \overline{P}\right) $ is solvable.
Moreover, in this case $G$ is a product of $k$ conjugates of $N_{G}\left(
P\right) $ if and only if $\overline{G}$ is a product of $k$ conjugates of $%
N_{\overline{G}}\left( \overline{P}\right) $. The claim follows.

(d) This follows from the fact that $N_{M}\left( P\right) \leq N_{G}\left(
P\right) $ for any $M\leq G$, and from the fact that if $P$ is non-normal,
then $N_{G}\left( P\right) $ is contained in some maximal subgroup of $G$,
so $N_{M}\left( P\right) =N_{G}\left( P\right) $ for some maximal subgroup $%
M $ of $G$, which contains $P$.

(e) Each $p$-subgroup of $G$ is contained in a Sylow $p$-subgroup of $G$
which is, in turn, contained in its normalizer.

(f) By assumption $N=Q_{1}\cdots Q_{n}$ where each $Q_{i}$ is a $p$%
-subgroup. Assume, without loss of generality, $Q_{n}\leq $ $P$. We have $%
G=NP$ because $G/N$ is a $p$-group. Now $G=NP=Q_{1}\cdots Q_{n-1}P$ and the
claim follows from (e).
\end{proof}

A third and crucial ingredient is the possibility to calculate $\gamma _{%
\func{cp}}^{A}(G)$ for many pairs $\left( G,A\right) $ of interest, using
the permutation character $1_{A}^{G}$. If the irreducible decomposition of
this character in terms of the complex irreducible characters of $G$ is
multiplicity free, one can employ a method, developed and implemented in GAP
as a tool called "mfer" by T. Breuer, I. H\"{o}hler and J. M\"{u}ller (\cite%
{BreuerLux}, \cite{BreuerMuller},\cite{GAP} and \cite{GAPCTblLib1.2.1}), in
order to obtain the structure constants of the Hecke algebra of the double
cosets of $A$. From these structure constants one can compute $\gamma _{%
\func{cp}}^{A}(G)$ as explained in \cite[Sections 2.1 and 5]%
{CGLMS2014conjugates}. Note that the "mfer" tool can be applied to groups $G$
where $G$ is a simple sporadic group, as well as to some of the groups
stored in the TomLib library of \cite{GAP}. The fourth ingredient are our
two results from the current paper: Lemma \ref{Lem_ss_Lie} and Theorem \ref%
{Th_unipotent_Sylows}. Further details on how these four ingredients are
used for deducing Table \ref{TblSporadics} are given in the Appendix.

Finally, in order to prove the statement that $\gamma _{\func{cp}}^{\func{ss}%
}(S)<4.84\log _{2}\log _{2}|S|$ holds for all simple non-abelian groups
which are either of Lie type or sporadic or the Tits group, we have to find
the maximum of $u\left( S\right) /\log _{2}\log _{2}|S|$ where $u\left(
S\right) $ is the upper bound we have on $\gamma _{\func{cp}}^{\func{ss}}(S)$%
, and $S$ varies over the groups in question. Let $S$ be a simple group of
Lie type. Then, by Lemma \ref{Lem_ss_Lie}, $u\left( S\right) =3$ and $%
u\left( S\right) /\log _{2}\log _{2}|S|~<1.18$, where the value $1.18$ is
obtained from the minimal value $60$ that $\left\vert S\right\vert $
attains. For the groups in Table \ref{TblSporadics} one gets $u\left(
S\right) /\log _{2}\log _{2}|S|~<4.84$, where the maximum is realized by the 
$HN$ group.
\end{proof}

\begin{theorem}
\label{Th_gammaSpecialForSimple} Let $S$ be a simple non-abelian group. Then 
$\gamma _{\func{cp}}^{\func{ss}}(S)<c\log _{2}\log _{2}|S|$, where $0<c\leq
12$ is a universal constant.
\end{theorem}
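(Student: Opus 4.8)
The plan is to invoke the Classification of Finite Simple Groups (CFSG) to reduce to the three families already treated in the preceding lemmas, and then to reconcile the two different shapes of bound that those lemmas produce: the logarithmic bound $12\log_2 n$ for alternating groups versus the doubly-logarithmic bound for the remaining families. Essentially all of the substantive content lives in Lemmas \ref{Lem_ss_Lie}, \ref{Lem_ss_An} and \ref{Lem_ss_sporadic}, so this theorem is the synthesis step together with one elementary estimate.

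First I would partition the non-abelian simple groups $S$ into: (1) the simple groups of Lie type, (2) the sporadic simple groups together with the Tits group ${}^{2}F_{4}(2)^{\prime }$, and (3) the alternating groups $A_{n}$ with $n\geq 5$. For families (1) and (2) there is nothing more to do: the final assertion of Lemma \ref{Lem_ss_sporadic} already gives $\gamma _{\func{cp}}^{\func{ss}}(S)<4.84\log _{2}\log _{2}|S|$, and since $4.84<12$ the asserted inequality holds at once with $c=12$ (indeed with the smaller constant $4.84$).

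The only case requiring a short computation is family (3). Lemma \ref{Lem_ss_An} supplies $\gamma _{\func{cp}}^{\func{ss}}(A_{n})<12\log _{2}(n)$, and the task is to absorb $12\log _{2}(n)$ into $12\log _{2}\log _{2}|A_{n}|$. Using $|A_{n}|=n!/2$ and the monotonicity of $\log _{2}$, it suffices to verify the elementary inequality $n\leq \log _{2}(n!/2)$ for all $n\geq 5$, equivalently $n+1\leq \log _{2}(n!)$, since this yields $\log _{2}(n)\leq \log _{2}\log _{2}|A_{n}|$ and hence $\gamma _{\func{cp}}^{\func{ss}}(A_{n})<12\log _{2}\log _{2}|A_{n}|$. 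I would establish $n+1\leq \log _{2}(n!)$ by a one-line induction on $n$ (the step uses $(n+1)!=(n+1)\cdot n!$ and $\log _{2}(n+1)\geq 1$), the base case $n=5$ being $\log _{2}(60)\approx 5.9\geq 5$; alternatively Stirling's bound $n!\geq (n/e)^{n}$ gives it directly. One checks that $A_{5}$ causes no trouble, both because the estimate already holds at $n=5$ and because $A_{5}\cong PSL(2,4)$ falls under family (1).

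The main obstacle is thus quite modest: it is precisely the re-expression of the alternating-group coefficient $12\log _{2}(n)$ against $\log _{2}\log _{2}|A_{n}|$, which is where the crude super-exponential growth of $|A_{n}|=n!/2$ is exploited. The value of the universal constant is then fixed by taking the maximum over the two regimes, $c=12$, the alternating coefficient dominating the $4.84$ coming from the groups of Lie type, sporadic groups, and the Tits group; consequently $0<c\leq 12$, completing the proof.
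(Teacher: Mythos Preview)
Your proposal is correct and follows essentially the same approach as the paper: split by CFSG, invoke Lemma~\ref{Lem_ss_sporadic} for Lie type and sporadic groups, and for $A_n$ convert Lemma~\ref{Lem_ss_An}'s bound $12\log_2 n$ into $12\log_2\log_2|A_n|$ via an elementary growth estimate on $n!/2$. The only cosmetic difference is that the paper uses the Stirling-type inequality $n!/2\geq (n/e)^n$ (and so handles $A_5$ separately as $PSL(2,4)$, starting the alternating case at $n\geq 6$), whereas you offer an equivalent one-line induction; both arguments yield $\log_2 n\leq \log_2\log_2|A_n|$ and hence $c\leq 12$.
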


\begin{proof}
For $S\cong A_{n}$ assume $n\geq 6$ ($A_{5}$ is treated as a simple group of
Lie type). Using $n!/2\geq \left( n/e\right) ^{n}$ which holds for all $%
n\geq 1$, gives 
\begin{equation*}
\log _{2}(\left\vert S\right\vert )=\log _{2}(n!/2)\geq \log _{2}(\left(
n/e\right) ^{n})=n\log _{2}(n/e)\text{,}
\end{equation*}
and, $\log _{2}\log _{2}(\left\vert S\right\vert )\geq \log _{2}\left(
n\right) +\log _{2}\log _{2}(n/e)$. Since $n\geq 6$, we have $n/e>2$ and $%
\log _{2}\log _{2}(n/e)$ is positive so $\log _{2}\left( n\right) <\log
_{2}\log _{2}(\left\vert S\right\vert )$. Thus, by Lemma \ref{Lem_ss_An}, $%
\gamma _{\func{cp}}^{\func{ss}}(S)<12\log _{2}(n)<12\log _{2}\log
_{2}(\left\vert S\right\vert )$. If $S$ is sporadic or of Lie type we have $%
\gamma _{\func{cp}}^{\func{ss}}(S)\leq 4.84\log _{2}\log _{2}|S|$ by Lemma %
\ref{Lem_ss_sporadic}. Combining the two cases gives the claim of the
theorem.
\end{proof}

\subsection{Reduction to special solvable $\func{cp}$-factorizations}

In this section we reduce the analysis of $\gamma _{\func{cp}}^{\func{s}}$
to that of $\gamma _{\func{cp}}^{\func{ss}}$ for simple groups, and this
reduction enables us to use Theorem \ref{Th_gammaSpecialForSimple} for
proving Theorem \ref{Th_loglog}.

\begin{lemma}
\label{Lem_gamma_sscpDirectProduct} Let $G=T_{1}^{r_{1}}\times\cdots\times
T_{m}^{r_{m}}$ where the $T_{i}$'s are pairwise non-isomorphic non-abelian
simple groups, and $r_{1},...,r_{m}$ are positive integers. Then 
\begin{equation*}
\gamma_{\func{cp}}^{\func{ss}}(G)\leq\max\left\{ \gamma_{\func{cp}}^{\func{ss%
}}(T_{1}),...,\gamma_{\func{cp}}^{\func{ss}}(T_{m})\right\} \text{.}
\end{equation*}
\end{lemma}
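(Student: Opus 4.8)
The plan is to reduce the direct-product statement to the individual simple factors by exhibiting, for each $T_i$, a special solvable $\func{cp}$-factorization of $G$ built coordinate-wise from short special solvable factorizations of $T_i$. The key structural fact I would exploit is condition (iii) in Definition \ref{Def_special_solvable_cp}: if $A_i \leq T_i$ is self-normalizing, solvable, and $\Aut(T_i)$-invariant up to $T_i$-conjugacy, then the subgroup $A_i^{r_i} \leq T_i^{r_i}$ (the direct product of $r_i$ copies of $A_i$, one in each coordinate) inherits all three properties inside $T_i^{r_i}$. Self-normalization and solvability are clear from the direct-product structure, while the $\Aut$-invariance requires checking that an automorphism of $T_i^{r_i}$ permutes the coordinate factors and acts on each by an element of $\Aut(T_i)$, so that the image of $A_i^{r_i}$ is conjugate to $A_i^{r_i}$ in $T_i^{r_i}$.

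First I would set $k := \max_j \gamma_{\func{cp}}^{\func{ss}}(T_j)$ and, for each $i$, choose a special solvable factorization $T_i = B_{i,1} \cdots B_{i,\ell_i}$ with $\ell_i = \gamma_{\func{cp}}^{\func{ss}}(T_i) \leq k$ and each $B_{i,j}$ a $T_i$-conjugate of a fixed $A_i$. To get a uniform length $k$ I would pad each factorization on the right by repeating the identity coset (equivalently, repeating $B_{i,\ell_i}$ or appending trivially), so that after padding $T_i = C_{i,1}\cdots C_{i,k}$ with each $C_{i,j}$ a conjugate of $A_i$; one must be slightly careful that appending extra factors of $A_i$ to a factorization of $T_i$ still yields $T_i$, which holds because $T_i = T_i \cdot A_i^{x}$ for any conjugate. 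Then, forming the subgroup $A := A_1^{r_1}\times\cdots\times A_m^{r_m} \leq G$, I would produce a length-$k$ factorization of $G$ by taking products coordinate-wise: for the $j$-th factor I pick the conjugate of $A$ that equals $C_{i,j}$ in each block of $T_i$-coordinates. Because the product in a direct product is computed componentwise, the product of these $k$ conjugates of $A$ recovers $T_1^{r_1}\times\cdots\times T_m^{r_m} = G$.

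The main obstacle I expect is verifying that $A = A_1^{r_1}\times\cdots\times A_m^{r_m}$ genuinely satisfies property (iii) in $G$, i.e. that it is $\Aut(G)$-invariant up to $G$-conjugacy. Since the $T_i$ are pairwise non-isomorphic, $\Aut(G)$ permutes isomorphic simple direct factors only within a single block $T_i^{r_i}$ and acts blockwise; within the block $T_i^{r_i}$ an automorphism is (up to an inner automorphism) a coordinate permutation composed with a diagonal automorphism from $\Aut(T_i)^{r_i}$. Applying the $\Aut(T_i)$-invariance of $A_i$ coordinate by coordinate, together with the coordinate permutation, shows $A^\alpha$ is $G$-conjugate to $A$. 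This is where the full strength of the "special" hypothesis is used, and it is the step that genuinely requires the non-isomorphism assumption on the $T_i$. Once (i)–(iii) are checked for $A$ and the length-$k$ coordinate-wise factorization is in hand, the bound $\gamma_{\func{cp}}^{\func{ss}}(G) \leq k = \max_i \gamma_{\func{cp}}^{\func{ss}}(T_i)$ follows immediately, completing the proof.
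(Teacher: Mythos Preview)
Your proposal is correct and follows essentially the same approach as the paper: build the subgroup $A=A_1^{r_1}\times\cdots\times A_m^{r_m}$, pad each factor's special solvable factorization to the common length $k=\max_i\gamma_{\func{cp}}^{\func{ss}}(T_i)$, take the coordinate-wise product of conjugates, and verify conditions (i)--(iii) using $\mathrm{Aut}(T_i^{r_i})\cong\mathrm{Aut}(T_i)^{r_i}\rtimes S_{r_i}$ together with $\mathrm{Aut}(G)=\prod_i\mathrm{Aut}(T_i^{r_i})$ (the latter relying on the pairwise non-isomorphism of the $T_i$). The only cosmetic difference is that the paper carries this out in two explicit stages, first for each $T_i^{r_i}$ and then for $G$, whereas you merge them into one pass.
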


\begin{proof}
For each $1\leq i\leq m$ set $k_{i}:=\gamma _{\func{cp}}^{\func{ss}}(T_{i})$%
. By assumption, for each $1\leq i\leq m$ there exists $B_{i1}<T_{i}$
satisfying Definition \ref{Def_special_solvable_cp}, and $T_{i}=B_{i1}\cdots
B_{ik_{i}}$ where $B_{ij}$ is conjugate in $T_{i}$ to $B_{i1}$ for all $%
1\leq j\leq k_{i}$. Set $k:=\max \left\{ k_{1},...,k_{m}\right\} $. We can
assume that the special solvable conjugate factorizations of the $T_{i}$ are
all of equal length $k$, since for each $1\leq i\leq m$ we can add subgroups 
$B_{ij}$ with $k_{i}+1\leq j\leq k$, chosen arbitrarily from the conjugates
of $B_{i1}$ in $T_{i}$. Clearly $T_{i}=B_{i1}\cdots B_{ik}$. We claim that $%
T_{i}^{r_{i}}=B_{i1}^{r_{i}}\cdots B_{ik}^{r_{i}}$ is a special solvable
conjugate factorization of $T_{i}^{r_{i}}$. It is easy to see that each $%
B_{ij}^{r_{i}}$ is solvable, being a direct product of solvable groups, and
that each $B_{ij}^{r_{i}}$ is conjugate in $T_{i}^{r_{i}}$ to $%
B_{i1}^{r_{i}} $ because each $B_{ij}$ is conjugate in $T_{i}$ to $B_{i1}$.
Similarly, $B_{i1}^{r_{i}}$ is self-normalizing in $T_{i}^{r_{i}}$, because $%
B_{i1}$ is self-normalizing in $T_{i}$. In order to verify condition (iii)
of Definition \ref{Def_gamma_sscp}, recall that $Aut\left(
T_{i}^{r_{i}}\right) \cong Aut\left( T_{i}\right) ^{r_{i}}\rtimes S_{r_{i}}$
where the symmetric group $S_{r_{i}}$ permutes the $r_{i}$ direct factors of 
$Aut\left( T_{i}\right) ^{r_{i}}$ according to its natural action on $%
\left\{ 1,...,r_{i}\right\} $ (\cite[9.24]{Rose1978groups}). Thus $S_{r_{i}}$
normalizes $B_{i1}^{r_{i}}$. Let $\alpha \in Aut\left( T_{i}^{r_{i}}\right) $%
. We have $\alpha =g\left( \alpha _{1},...,\alpha _{r_{i}}\right) $ where $%
\alpha _{j}\in Aut\left( T_{i}\right) $, $1\leq j\leq r_{i}$ and $g\in
S_{r_{i}}$. Since $g$ normalizes $B_{i1}^{r_{i}}$ we get $\left(
B_{i1}^{r_{i}}\right) ^{\alpha }=B_{i1}^{\alpha _{1}}\times \cdots \times
B_{i1}^{\alpha _{r_{i}}}$. Now we can use the fact that $B_{i1}$ satisfies
condition (iii) of Definition \ref{Def_gamma_sscp}, as a subgroup of $T_{i}$.

Next define for each $1\leq j\leq k$, $B_{j}:={\textstyle\prod%
\limits_{i=1}^{m}}B_{ij}^{r_{i}}$ (a direct product). We have $G=B_{1}\cdots
B_{k}$, and again we claim that this is a special solvable conjugate
factorization. The proof relies on the previous claim, namely, that $%
T_{i}^{r_{i}}=B_{i1}^{r_{i}}\cdots B_{ik}^{r_{i}}$ is a special solvable
conjugate factorization, and proceeds in the same way where for showing that 
$B_{1}$ satisfies condition (iii) of Definition \ref{Def_gamma_sscp}, we use
the fact that $Aut\left( G\right) =Aut\left( T_{1}^{r_{1}}\right) \times
\cdots \times Aut\left( T_{m}^{r_{m}}\right) $, which follows from the fact
that the $T_{i}$'s are pairwise non-isomorphic non-abelian simple groups (%
\cite[9.25]{Rose1978groups}).

Finally, since $G=B_{1}\cdots B_{k}$ is a special solvable conjugate
factorization, we get $\gamma_{\func{cp}}^{\func{ss}}(G)\leq k$ which is
what we wanted to prove.
\end{proof}

Now we state and prove the main reduction argument.

\begin{lemma}
\label{Lem_gamma(G)<=gamma'(N)+gamma(G/N)} Let $G$ be a finite group and let 
$N\trianglelefteq G$. Then 
\begin{equation*}
\gamma _{\func{cp}}^{\func{s}}(G)\leq \gamma _{\func{cp}}^{\func{ss}%
}(N)+\gamma _{\func{cp}}^{\func{s}}(G/N)\text{.}
\end{equation*}
\end{lemma}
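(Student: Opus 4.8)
The plan is to build a solvable conjugate factorization of $G$ by combining a special solvable factorization of the normal subgroup $N$ with a solvable factorization of the quotient $G/N$, pulled back to $G$. Write $a:=\gamma_{\func{cp}}^{\func{ss}}(N)$ and $b:=\gamma_{\func{cp}}^{\func{s}}(G/N)$. By definition of $a$ there is a subgroup $A\leq N$ satisfying conditions (i)--(iii) of Definition \ref{Def_special_solvable_cp} (relative to $N$) together with elements $n_1,\dots,n_a\in N$ such that $N=A^{n_1}\cdots A^{n_a}$. By definition of $b$ there is a solvable $\overline{C}\leq G/N$ and cosets $\overline{g}_1,\dots,\overline{g}_b$ with $G/N=\overline{C}^{\,\overline{g}_1}\cdots\overline{C}^{\,\overline{g}_b}$. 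The main idea is to lift $\overline{C}$ to the full preimage $D\leq G$ (so $N\leq D$ and $D/N\cong\overline{C}$), and then to replace each factor in the $N$-factorization by a factor that works inside $G$.

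First I would handle the quotient part. Let $D$ be the preimage of $\overline{C}$ in $G$ under the canonical projection $\pi:G\to G/N$; pick $g_i\in G$ with $\pi(g_i)=\overline{g}_i$. Applying $\pi^{-1}$ to the factorization of $G/N$ gives $G=D^{g_1}\cdots D^{g_b}N$, since $\pi(D^{g_1}\cdots D^{g_b})=G/N$ and $\ker\pi=N$. I would then expand the trailing $N$ using its special solvable factorization, $N=A^{n_1}\cdots A^{n_a}$, obtaining a factorization of $G$ of length $b+a$ whose factors are conjugates either of $D$ or of $A$. The remaining task is to make all factors conjugate to a single solvable subgroup of $G$.

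The key step --- and the place I expect the real work --- is to arrange that $D$ and $A$ are conjugate, or more precisely to choose a single solvable subgroup that simultaneously produces the $D$-part and the $A$-part. Here is where conditions (ii) and (iii) in the definition of $\gamma_{\func{cp}}^{\func{ss}}(N)$ are used: because $A$ is self-normalizing in $N$ and invariant under $\operatorname{Aut}(N)$ up to $N$-conjugacy, and because $N\trianglelefteq G$ so that conjugation by any $g\in G$ induces an automorphism of $N$, each conjugate $A^g$ (for $g\in G$) is already $N$-conjugate to $A$. This means all the $G$-conjugates of $A$ appearing in the product can be taken to be conjugates by elements of $N$ --- which is exactly how we obtained them --- and, crucially, that $A$ has a well-defined $G$-conjugacy behaviour compatible with the quotient. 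I would use this to absorb the $A$-factors and the $D$-factors into conjugates of one chosen solvable subgroup $H\leq G$: the natural candidate is a subgroup projecting onto a conjugate of $\overline{C}$ and meeting $N$ in a conjugate of $A$, whose solvability follows from an extension argument ($H\cap N$ solvable and $H/(H\cap N)$ solvable imply $H$ solvable). The delicate bookkeeping is verifying that the $b+a$ factors are genuinely all conjugate to this single $H$ and that their product is all of $G$; this is the technical heart, and it is what conditions (ii) and (iii) are engineered to make possible.

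Finally, once a single solvable $H\leq G$ with $G=H^{y_1}\cdots H^{y_{a+b}}$ is exhibited, the definition of $\gamma_{\func{cp}}^{\func{s}}$ yields $\gamma_{\func{cp}}^{\func{s}}(G)\leq a+b=\gamma_{\func{cp}}^{\func{ss}}(N)+\gamma_{\func{cp}}^{\func{s}}(G/N)$, as desired. The only subtlety I would flag is the degenerate cases where $N$ is solvable (so $\gamma_{\func{cp}}^{\func{ss}}(N)$ is governed by the convention built into Definition \ref{Def_special_solvable_cp}) or where $G/N$ is solvable (so $\gamma_{\func{cp}}^{\func{s}}(G/N)=1$); these should be checked to confirm the inequality holds without sign or off-by-one errors, but they do not affect the main construction.
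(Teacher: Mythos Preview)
Your proposal is correct and follows essentially the same route as the paper. The paper makes your ``natural candidate'' explicit by taking $H=N_{D}(A)$, the normalizer of $A$ in the preimage $D$: condition~(iii) yields the Frattini-type decomposition $D=N_{D}(A)\,N$, condition~(ii) gives $N_{D}(A)\cap N=N_{N}(A)=A$ so that $N_{D}(A)/A\cong D/N$ is solvable and hence $N_{D}(A)$ is solvable, and then each $D$-factor is rewritten as a conjugate of $N_{D}(A)$ times $N$ (the $N$'s slide to the end by normality) while each $A$-factor is enlarged to a conjugate of $N_{D}(A)$ since $A\le N_{D}(A)$, giving $G$ as a product of $a+b$ conjugates of the solvable subgroup $N_{D}(A)$.
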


\begin{proof}
Set $t:=\gamma _{\func{cp}}^{\func{s}}(G/N)$. Then, by definition of $\gamma
_{\func{cp}}^{\func{s}}(G/N)$, there exists $H\leq G$ and $N\leq H$ such
that $H/N$ is solvable, and there exist $t$ subgroups $H_{1},\ldots ,H_{t}$
of $G$, all containing $N$, such that $G/N=\left( H_{1}/N\right) \cdots
\left( H_{t}/N\right) $, and $H_{i}/N$ is conjugate to $H/N$ in $G/N$ for
each $1\leq i\leq t$. It follows that $H_{i}$ is conjugate to $H$ in $G$ for
all $1\leq i\leq t$, and $G=H_{1}\cdots H_{t}$.

Set $k:=\gamma _{\func{cp}}^{\func{ss}}(N)$. By definition of $\gamma _{%
\func{cp}}^{\func{ss}}(N)$, there exists $B\leq N$ satisfying (i)-(iii) in
Definition \ref{Def_special_solvable_cp} and $N=B_{1}\cdots B_{k}$, where
each $B_{i}$ is conjugate to $B$ in $N$. We claim that $H=N_{H}\left(
B\right) N$. First note that both $N_{H}\left( B\right) $ and $N$ are
subgroups of $H$ so $N_{H}\left( B\right) N\leq H$. For the reverse
inclusion let $h\in H$ be arbitrary. Since $N\trianglelefteq H$, $h$ acts on 
$N$ as an automorphism, and therefore, by property (iii) in Definition \ref%
{Def_special_solvable_cp}, there exists $n\in N$ such that $B^{h}=B^{n}$
from which it follows that $hn^{-1}\in N_{H}\left( B\right) $. Hence $%
h=\left( hn^{-1}\right) n\in N_{H}\left( B\right) N$.

Now $H=N_{H}\left( B\right) N$ implies that $H/N=N_{H}\left( B\right)
N/N\cong N_{H}\left( B\right) /B$ (by Definition \ref%
{Def_special_solvable_cp}.(ii)). But since both $H/N$ and $B$ are solvable,
we get that $N_{H}\left( B\right) $ is solvable.

For each $1\leq i\leq t$ let $g_{i}\in G$ be such that $H_{i}=H^{g_{i}}$,
and for each $1\leq j\leq k$ let $n_{j}\in N$ be such that $B_{j}=B^{n_{j}}$%
. Using the above we get:%
\begin{gather*}
G=H_{1}\cdots H_{t}=H^{g_{1}}\cdots H^{g_{t}}=\left( N_{H}\left( B\right)
N\right) ^{g_{1}}\cdots\left( N_{H}\left( B\right) N\right) ^{g_{t}}= \\
=\left( N_{H}\left( B\right) \right) ^{g_{1}}\cdots\left( N_{H}\left(
B\right) \right) ^{g_{t}}N=\left( N_{H}\left( B\right) \right)
^{g_{1}}\cdots\left( N_{H}\left( B\right) \right) ^{g_{t}}B_{1}\cdots B_{k}=
\\
=\left( N_{H}\left( B\right) \right) ^{g_{1}}\cdots\left( N_{H}\left(
B\right) \right) ^{g_{t}}B^{n_{1}}\cdots B^{n_{k}}= \\
=\left( N_{H}\left( B\right) \right) ^{g_{1}}\cdots\left( N_{H}\left(
B\right) \right) ^{g_{t}}\left( N_{H}\left( B\right) \right)
^{n_{1}}\cdots\left( N_{H}\left( B\right) \right) ^{n_{k}}\text{.}
\end{gather*}
Since $N_{H}\left( B\right) $ is solvable this implies that $\gamma_{\func{cp%
}}^{\func{s}}(G)\leq k+t$ as claimed.
\end{proof}

Our next definition is required for the application of Lemma \ref%
{Lem_gamma(G)<=gamma'(N)+gamma(G/N)}. We denote by $R\left( G\right) $ the
solvable radical of $G$ and by $\func{soc}\left(G\right) $ the socle of $G$.

\begin{definition}
\label{Def_nab_socle_series} Let $G$ be a finite group. \emph{The
non-abelian socle series} of $G$ is the unique normal series $R\left(
G\right) =H_{1}\leq \ldots \leq H_{t}=G$ of $G$ which satisfies the
following conditions:

\begin{enumerate}
\item[(i)] for all $1\leq i\leq (t-1)/2$ we have $H_{2i+1}/H_{2i}=R\left(
G/H_{2i}\right) $,

\item[(ii)] for all $1\leq i\leq t/2$ we have $H_{2i}/H_{2i-1}=\func{soc}%
\left( G/H_{2i-1}\right) $.
\end{enumerate}

The number $\left\lfloor t/2\right\rfloor $ will be called \emph{the
non-abelian socle length} of $G$.
\end{definition}

In the sequel we will denote $N_{i}:=H_{2i}/H_{2i-1}=\func{soc}\left(
G/H_{2i-1}\right) $, and $n_{i}$ will stand for the number of simple
non-abelian direct factors of $N_{i}$ for all $1\leq i\leq t/2$. Observe
that the uniqueness of the non-abelian socle series of $G$ is a consequence
of the uniqueness of the solvable radical and the socle of any given finite
group. Moreover, for all $1\leq i\leq t/2$ we have $R\left(
G/H_{2i-1}\right) =1$. This is clear for $i=1$, and for $i\geq 2$ we have $%
G/H_{2i-1}\cong \left( G/H_{2i-2}\right) /\left( H_{2i-1}/H_{2i-2}\right) $
and now we can use $H_{2i-1}/H_{2i-2}=R\left( G/H_{2i-2}\right) $. Since $%
R\left( G/H_{2i-1}\right) =1$ we get that $N_{i}$ is a non-trivial direct
product of non-abelian simple groups. As a result, the inclusion $%
H_{2i-1}\leq H_{2i}$ is always strict, while the inclusion $H_{2i}\leq
H_{2i+1}$ need not be strict. Finally note that the non-abelian socle length
of $G$ is zero if and only if $G$ is solvable.

\begin{corollary}
\label{Coro_gamma<= 1+nabSocleLength}Let $G$ be a non-trivial finite group
whose non-abelian socle length is $m\geq0$. For each $1\leq i\leq m$ pick a
simple non-abelian direct factor $T_{i}$ of $N_{i}$ such that $\gamma_{\func{%
cp}}^{\func{ss}}(T_{i})$ is maximal compared to any other factor of $N_{i}$.
Then 
\begin{equation}
\gamma_{\func{cp}}^{\func{s}}(G)\leq1+{\textstyle\sum\limits_{i=1}^{m}}
\gamma_{\func{cp}}^{\func{ss}}(T_{i})\text{.}  \label{Ineq_gamma_scp}
\end{equation}
\end{corollary}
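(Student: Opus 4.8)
The plan is to induct on the non-abelian socle length $m$ of $G$. The base case $m=0$ is exactly the statement that $G$ is solvable, so $\gamma_{\func{cp}}^{\func{s}}(G)=1$, which matches the empty-sum right-hand side. For the inductive step I would peel off the bottom layer $H_2$ of the non-abelian socle series and reduce to the quotient $G/H_2$: by the properties of the series recorded after Definition \ref{Def_nab_socle_series} one has $R(G/H_2)=H_3/H_2$ and $\func{soc}(G/H_3)=N_2$, and continuing in this way shows that $G/H_2$ has non-abelian socle length $m-1$ with socle factors precisely $N_2,\dots,N_m$ (so the maximal factors chosen for $G/H_2$ coincide with $T_2,\dots,T_m$).

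The one ingredient not yet isolated in the text, and the hinge of the whole count, is the observation that quotienting by a solvable normal subgroup is \emph{free} for $\gamma_{\func{cp}}^{\func{s}}$: if $N\trianglelefteq G$ is solvable then $\gamma_{\func{cp}}^{\func{s}}(G)\leq\gamma_{\func{cp}}^{\func{s}}(G/N)$. Indeed, given a solvable $\func{cp}$-factorization $G/N=\bar{A}_1\cdots\bar{A}_k$ with the $\bar{A}_i$ conjugate to a solvable $\bar{A}$, the full preimages $A_i\geq N$ are solvable (each an extension of the solvable $N$ by $\bar{A}_i$), are conjugate in $G$ to the preimage $A$ of $\bar{A}$, and satisfy $A_1\cdots A_k=G$ since this product contains $N$ and surjects onto $G/N$. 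Applying this with $N=R(G)$ yields $\gamma_{\func{cp}}^{\func{s}}(G)\leq\gamma_{\func{cp}}^{\func{s}}(G/R(G))$ at no cost in length.

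Writing $\bar{G}:=G/R(G)$, so that $R(\bar{G})=1$ and $\func{soc}(\bar{G})=N_1$ is a direct product of non-abelian simple groups, I would apply Lemma \ref{Lem_gamma(G)<=gamma'(N)+gamma(G/N)} with the normal subgroup $N_1\trianglelefteq\bar{G}$ to obtain
\[
\gamma_{\func{cp}}^{\func{s}}(\bar{G})\leq\gamma_{\func{cp}}^{\func{ss}}(N_1)+\gamma_{\func{cp}}^{\func{s}}(\bar{G}/N_1).
\]
Here $\bar{G}/N_1=G/H_2$, and after grouping the simple factors of $N_1$ into isotypic blocks Lemma \ref{Lem_gamma_sscpDirectProduct} gives $\gamma_{\func{cp}}^{\func{ss}}(N_1)\leq\gamma_{\func{cp}}^{\func{ss}}(T_1)$, where $T_1$ is the chosen factor of $N_1$ of maximal $\gamma_{\func{cp}}^{\func{ss}}$. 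The induction hypothesis applied to $G/H_2$ gives $\gamma_{\func{cp}}^{\func{s}}(G/H_2)\leq 1+\sum_{i=2}^{m}\gamma_{\func{cp}}^{\func{ss}}(T_i)$, and chaining these with the free reduction yields
\[
\gamma_{\func{cp}}^{\func{s}}(G)\leq\gamma_{\func{cp}}^{\func{ss}}(T_1)+\Big(1+\sum_{i=2}^{m}\gamma_{\func{cp}}^{\func{ss}}(T_i)\Big)=1+\sum_{i=1}^{m}\gamma_{\func{cp}}^{\func{ss}}(T_i).
\]

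The only genuine difficulty is bookkeeping of the solvable layers: there are up to $m+1$ of them along the series, and a naive application of Lemma \ref{Lem_gamma(G)<=gamma'(N)+gamma(G/N)} to $R(G)$ at each level would add a spurious $+1$ every time and inflate the bound to $1+m+\sum_i\gamma_{\func{cp}}^{\func{ss}}(T_i)$. The free-reduction observation is exactly what collapses the collective contribution of all solvable layers to the single leading $+1$; it must be invoked once, on the current group's own solvable radical, at the start of each inductive step, so that no excess accumulates. Everything else is a direct combination of Lemma \ref{Lem_gamma_sscpDirectProduct}, Lemma \ref{Lem_gamma(G)<=gamma'(N)+gamma(G/N)}, and the already-verified structure of the non-abelian socle series.
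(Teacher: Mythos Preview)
Your proposal is correct and follows essentially the same route as the paper: induct on $m$, pass from $G$ to $G/R(G)$ at no cost, apply Lemma~\ref{Lem_gamma(G)<=gamma'(N)+gamma(G/N)} to $N_1\trianglelefteq G/R(G)$, bound $\gamma_{\func{cp}}^{\func{ss}}(N_1)$ by $\gamma_{\func{cp}}^{\func{ss}}(T_1)$ via Lemma~\ref{Lem_gamma_sscpDirectProduct}, and finish by induction on $G/H_2$. The paper records the free-reduction step as $\gamma_{\func{cp}}^{\func{s}}(G)\leq\gamma_{\func{cp}}^{\func{s}}(G/R(G))$ (and in fact notes equality), justified exactly as you do by pulling back preimages; your closing remark about why the solvable layers contribute only a single $+1$ is a helpful gloss on precisely this point.
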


\begin{proof}
By induction on $m\geq 0$. If $m=0$ then $G$ is solvable and so $\gamma _{%
\func{cp}}^{\func{s}}(G)=1$. Suppose $m>0$. Then $\gamma _{\func{cp}}^{\func{%
s}}(G)\leq \gamma _{\func{cp}}^{\func{s}}(G/R\left( G\right) )$. In fact
equality holds since if $G=A_{1}\dots A_{k}$ is a solvable $\func{cp}$%
-factorization then so is $G=\left( A_{1}R\left( G\right) \right) \dots
\left( A_{k}R\left( G\right) \right) $. Moreover, if $R\left( G\right)
=H_{1}\leq \ldots \leq H_{t}=G$ is the non-abelian socle series of $G$, then 
$1=H_{1}/R\left( G\right) \leq \ldots \leq H_{t}/R\left( G\right) =G/R\left(
G\right) $ is the non-abelian socle series of $G/R\left( G\right) $. We have 
$R\left( G/R\left( G\right) \right) =1$ and $N_{1}=H_{2}/R\left( G\right) $.
Hence, by Lemma \ref{Lem_gamma(G)<=gamma'(N)+gamma(G/N)} we have $\gamma _{%
\func{cp}}^{\func{s}}(G)=\gamma _{\func{cp}}^{\func{s}}(G/R\left( G\right)
)\leq \gamma _{\func{cp}}^{\func{ss}}(N_{1})+\gamma _{\func{cp}}^{\func{s}%
}(\left( G/R\left( G\right) \right) /\left( H_{2}/R\left( G\right) \right) )$%
. Using $\left( G/R\left( G\right) \right) /\left( H_{2}/R\left( G\right)
\right) \cong G/H_{2}$, we obtain $\gamma _{\func{cp}}^{\func{s}}(G)\leq
\gamma _{\func{cp}}^{\func{ss}}(N_{1})+\gamma _{\func{cp}}^{\func{s}}\left(
G/H_{2}\right) $. By Lemma \ref{Lem_gamma_sscpDirectProduct}, $\gamma _{%
\func{cp}}^{\func{ss}}(N_{1})\leq \gamma _{\func{cp}}^{\func{ss}}(T_{1})$,
and since the non-abelian socle length of $G/H_{2}$ is $m-1$, we have by
induction $\gamma _{\func{cp}}^{\func{s}}(G/H_{2})\leq 1+{\textstyle%
\sum\limits_{i=2}^{m}}\gamma _{\func{cp}}^{\func{ss}}(T_{i})$. The claim
follows.
\end{proof}

Theorem \ref{Th_gammaSpecialForSimple} provides an upper bound on each term
in the sum $\sum_{i=1}^{m}\gamma _{\func{cp}}^{\func{ss}}(T_{i})$ appearing
on the r.h.s. of Inequality \ref{Ineq_gamma_scp}. The last ingredient of the
proof of Theorem \ref{Th_loglog} is an upper bound on the number of terms in
this sum which is equal to the non-abelian socle length of $G$. Recall that
for any finite group $H$ there exists the least integer $n$, customarily
denoted $\mu (H)$, so that $H$ embeds in the symmetric group $S_{n}$. In
other words, $\mu \left( H\right) $ is the minimal degree of a faithful
permutation representation of $H$. We will make use of the following
properties of this quantity. If $H_{1}\leq H$ then $\mu \left( H_{1}\right)
\leq \mu \left( H\right) $ (immediate from the definition). If $%
N=T_{1}\times \cdots \times T_{k}$ where $T_{i}$ simple non-abelian for each 
$1\leq i\leq k$ then $\mu \left( N\right) ={\textstyle\sum\limits_{i=1}^{k}}%
\mu \left( T_{i}\right) $ (\cite[Theorem 3.1]{KP1988permutation_rep}). If $%
N\trianglelefteq H$ and $R\left( H/N\right) =1$ then $\mu \left( H/N\right)
\leq \mu \left( H\right) $ (\cite[Theorem 1]{KP2000permutation_rep}). If $T$
is simple non-abelian then $\mu \left( T\right) \geq 5$ (since all subgroups
of $S_{n}$ are solvable if $n<5$).

\begin{lemma}
\label{Lem_mu(G/H_2i-1)<=n_i-1}Let $G$ be a non-solvable group and let $%
R\left( G\right) =H_{1}\leq \ldots \leq H_{t}=G$ be the non-abelian socle
series of $G$. Then, for each $i>1$, $\mu \left( G/H_{2i-1}\right) \leq
n_{i-1}$.
\end{lemma}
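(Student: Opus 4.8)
The plan is to fix $i>1$ and pass to the quotient $Q:=G/H_{2i-3}$, whose socle is exactly $N_{i-1}$. From the definition of the non-abelian socle series (and the remarks following Definition \ref{Def_nab_socle_series}) I would first record two structural facts: $R(Q)=R(G/H_{2i-3})=1$, and $\operatorname{soc}(Q)=H_{2i-2}/H_{2i-3}=N_{i-1}$, a direct product of $n_{i-1}$ non-abelian simple groups $S_1,\dots,S_{n_{i-1}}$. Since $R(Q)=1$, the socle is self-centralizing, $C_Q(\operatorname{soc}(Q))=1$: any nontrivial normal subgroup contained in $C_Q(\operatorname{soc}(Q))$ would contain a minimal normal subgroup of $Q$, which lies both in $\operatorname{soc}(Q)$ and in its centralizer, hence in $Z(\operatorname{soc}(Q))=1$, a contradiction. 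In particular $Q$ embeds into $\operatorname{Aut}(N_{i-1})$.

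Next I would use the conjugation action of $Q$ on the set $\{S_1,\dots,S_{n_{i-1}}\}$ of simple direct factors of its socle — these are precisely the minimal normal subgroups of $N_{i-1}$, and are therefore permuted by $Q$ — to obtain a homomorphism $\pi\colon Q\to S_{n_{i-1}}$ with kernel $K=\bigcap_j N_Q(S_j)$. Plainly $\operatorname{soc}(Q)\le K$. The key point is that $K/\operatorname{soc}(Q)$ is solvable: the action of $K$ on the factors gives a map $K\to\prod_j\operatorname{Aut}(S_j)$, injective because its kernel is $C_K(\operatorname{soc}(Q))\le C_Q(\operatorname{soc}(Q))=1$, and it carries $\operatorname{soc}(Q)$ onto the inner part, so $K/\operatorname{soc}(Q)$ embeds into $\prod_j\operatorname{Out}(S_j)$, which is solvable by Schreier's Conjecture.

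The main work is to identify $G/H_{2i-1}$ with $\pi(Q)/R(\pi(Q))$. Writing $M:=H_{2i-1}/H_{2i-3}$, so that $G/H_{2i-1}\cong Q/M$, condition (i) of Definition \ref{Def_nab_socle_series} gives $M/\operatorname{soc}(Q)\cong H_{2i-1}/H_{2i-2}=R(G/H_{2i-2})=R(Q/\operatorname{soc}(Q))$; that is, $M$ is the full preimage in $Q$ of the solvable radical of $Q/\operatorname{soc}(Q)$. Since $K/\operatorname{soc}(Q)$ is a solvable normal subgroup of $Q/\operatorname{soc}(Q)$, it lies in that radical, so $K\le M$; hence $\pi$ descends and $Q/M\cong(Q/K)/(M/K)$. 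A short radical computation — using that $K/\operatorname{soc}(Q)$ is solvable, so the preimage of $R\bigl((Q/\operatorname{soc}(Q))/(K/\operatorname{soc}(Q))\bigr)$ is again solvable and thus contained in $R(Q/\operatorname{soc}(Q))=M/\operatorname{soc}(Q)$ — shows $M/K=R(Q/K)$, whence $Q/M\cong\pi(Q)/R(\pi(Q))$. I expect this bookkeeping with the radicals of the successive quotients to be the fiddliest step.

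Finally I would invoke the properties of $\mu$ recalled above. Since $R\bigl(\pi(Q)/R(\pi(Q))\bigr)=1$ and $\pi(Q)\le S_{n_{i-1}}$,
\[
\mu(G/H_{2i-1})=\mu\bigl(\pi(Q)/R(\pi(Q))\bigr)\le\mu(\pi(Q))\le\mu(S_{n_{i-1}})\le n_{i-1},
\]
where the first inequality is the property ``$N\trianglelefteq H$ and $R(H/N)=1$ imply $\mu(H/N)\le\mu(H)$'', the second is monotonicity under subgroups, and the last holds because $S_{n_{i-1}}$ acts faithfully on $n_{i-1}$ points. The degenerate case $n_{i-1}=1$ forces $\pi(Q)=1$ and hence $G/H_{2i-1}$ trivial, which is consistent with the bound.
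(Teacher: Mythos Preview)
Your proof is correct and follows essentially the same approach as the paper: embed $Q=G/H_{2i-3}$ into $\operatorname{Aut}(N_{i-1})$ via the self-centralizing socle, use Schreier's conjecture to see that the kernel of the permutation action on the simple factors is solvable modulo $\operatorname{soc}(Q)$, and then apply the Kov\'acs--Praeger inequality $\mu(H/N)\le\mu(H)$ together with $\mu(S_{n_{i-1}})\le n_{i-1}$. The paper reduces first to $i=2$ and routes the computation through the ambient group $A=\prod_j\operatorname{Out}(T_j)^{r_j}\rtimes S_{r_j}$ rather than directly through $\pi(Q)$, but the substance is the same; your identification $G/H_{2i-1}\cong\pi(Q)/R(\pi(Q))$ is in fact a slightly cleaner packaging of the same radical bookkeeping.
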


\begin{proof}
It is clearly sufficient to prove this for the case $i=2$. So we will prove $%
\mu \left( G/H_{3}\right) \leq n_{1}$. Since $G/H_{1}$ has a trivial
solvable radical it acts faithfully by conjugation on $N_{1}=soc\left(
G/H_{1}\right) =H_{2}/H_{1}$ and so embeds in $Aut\left( N_{1}\right) $. Now 
$N_{1}=T_{1}^{r_{1}}\times \cdots \times T_{m}^{r_{m}}$ where the $T_{i}$'s
are pairwise non-isomorphic non-abelian simple groups, and $r_{1},...,r_{m}$
are positive integers ($\sum_{i=1}^{m}r_{i}=n_{1}$). We have $Aut\left(
N_{1}\right) =Aut\left( T_{1}^{r_{1}}\right) \times \cdots \times Aut\left(
T_{m}^{r_{m}}\right) $ and $Aut\left( T_{i}^{r_{i}}\right) \cong Aut\left(
T_{i}\right) ^{r_{i}}\rtimes S_{r_{i}}$ where the symmetric group $S_{r_{i}}$
permutes the $r_{i}$ direct factors of $Aut\left( T_{i}\right) ^{r_{i}}$
according to its natural action on $\left\{ 1,...,r_{i}\right\} $ (see \cite[%
9.25]{Rose1978groups}). Now, the image of $G/H_{1}$ in $Aut\left(
N_{1}\right) $ contains $Inn\left( T_{1}^{r_{1}}\right) \times \cdots \times
Inn\left( T_{m}^{r_{m}}\right) =\left( Inn\left( T_{1}\right) \right)
^{r_{1}}\times \cdots \times \left( Inn\left( T_{m}\right) \right) ^{r_{m}}$
which is, in fact, the image of $N_{1}$ so 
\begin{gather*}
\left( G/H_{1}\right) /N_{1}=\left( G/H_{1}\right) /\left(
H_{2}/H_{1}\right) \cong G/H_{2}\precsim Aut\left( N_{1}\right)
/\prod_{i=1}^{m}\left( Inn\left( T_{i}\right) \right) ^{r_{i}} \\
\cong \prod_{i=1}^{m}Out\left( T_{i}\right) ^{r_{i}}\rtimes S_{r_{i}}\text{,}
\end{gather*}%
where $\prod_{i=1}^{m}$ is direct, $Out\left( T_{i}\right) :=Aut\left(
T_{i}\right) /Inn\left( T_{i}\right) $, and $\precsim $ denotes embedding.
Set $A:=\prod_{i=1}^{m}Out\left( T_{i}\right) ^{r_{i}}\rtimes S_{r_{i}}$, $%
B:=\prod_{i=1}^{m}Out\left( T_{i}\right) ^{r_{i}}$ and $S:=%
\prod_{i=1}^{m}S_{r_{i}}\leq S_{n_{1}}$. We have $A=BS$, and $%
B\trianglelefteq A$. Furthermore, $B$ is solvable by Schreier's conjecture.
Hence $B\leq R\left( A\right) $. Therefore $A/R\left( A\right) =SR\left(
A\right) /R\left( A\right) \cong S/\left( S\cap R\left( A\right) \right) $.
Since $R\left( A/R\left( A\right) \right) =1$ we have $R\left( S/\left(
S\cap R\left( A\right) \right) \right) =1$. Thus, by \cite[Theorem 1]%
{KP2000permutation_rep}, 
\begin{equation*}
\mu \left( A/R\left( A\right) \right) =\mu \left( S/\left( S\cap R\left(
A\right) \right) \right) \leq \mu \left( S\right) \leq \mu \left(
S_{n_{1}}\right) =n_{1}\text{.}
\end{equation*}%
Now, identifying $G/H_{2}$ with its embedding in $A$, we have that $\left(
G/H_{2}\right) R\left( A\right) /R\left( A\right) $ is a subgroup of $%
A/R\left( A\right) $ and so $\mu \left( \left( G/H_{2}\right) R\left(
A\right) /R\left( A\right) \right) \leq \mu \left( A/R\left( A\right)
\right) $. On the other hand, 
\begin{equation*}
\left( G/H_{2}\right) R\left( A\right) /R\left( A\right) \cong \left(
G/H_{2}\right) /\left( \left( G/H_{2}\right) \cap R\left( A\right) \right) 
\text{.}
\end{equation*}%
Set $D:=\left( G/H_{2}\right) \cap R\left( A\right) \leq R\left(
G/H_{2}\right) $. By an isomorphism theorem we have $\left( \left(
G/H_{2}\right) /D\right) /\left( R\left( G/H_{2}\right) /D\right) \cong
\left( G/H_{2}\right) /R\left( G/H_{2}\right) $. Hence, by \cite[Theorem 1]%
{KP2000permutation_rep},:%
\begin{gather*}
\mu \left( \left( G/H_{2}\right) /R\left( G/H_{2}\right) \right) \leq \mu
\left( \left( G/H_{2}\right) /D\right) =\mu \left( \left( G/H_{2}\right)
R\left( A\right) /R\left( A\right) \right) \\
\leq \mu \left( A/R\left( A\right) \right) \leq n_{1}\text{.}
\end{gather*}%
On the other hand 
\begin{equation*}
\left( G/H_{2}\right) /R\left( G/H_{2}\right) =\left( G/H_{2}\right) /\left(
H_{3}/H_{2}\right) \cong G/H_{3}\text{,}
\end{equation*}%
and the claim $\mu \left( G/H_{3}\right) \leq n_{1}$ follows.
\end{proof}

\begin{lemma}
Let $G$ be a non-solvable group, and let $m$ be the non-abelian socle length
of $G$. Then $5n_{i}\leq n_{i-1}$ for all $2\leq i\,\leq m$.
\end{lemma}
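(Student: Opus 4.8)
The plan is to use the minimal faithful permutation degree $\mu$ as a bridge between $n_{i}$ and $n_{i-1}$, drawing on the three properties of $\mu$ recalled just before Lemma \ref{Lem_mu(G/H_2i-1)<=n_i-1}: monotonicity under passage to subgroups, additivity on direct products of non-abelian simple groups, and the uniform lower bound $\mu(T)\geq 5$ valid for every non-abelian simple group $T$. The whole argument is a short chain of inequalities, with the substantive input being the already-established Lemma \ref{Lem_mu(G/H_2i-1)<=n_i-1}.

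First I would fix $i$ with $2\leq i\leq m$ and recall that by construction $N_{i}$ is the socle of $G/H_{2i-1}$, which (as noted in the discussion following Definition \ref{Def_nab_socle_series}, using $R(G/H_{2i-1})=1$) is a non-trivial direct product of $n_{i}$ non-abelian simple groups, say $N_{i}=T_{i,1}\times\cdots\times T_{i,n_{i}}$. By additivity of $\mu$ on such a product we have $\mu(N_{i})=\sum_{j=1}^{n_{i}}\mu(T_{i,j})$, and since each factor satisfies $\mu(T_{i,j})\geq 5$ this yields the lower bound $\mu(N_{i})\geq 5n_{i}$.

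Next, since $N_{i}$ is a subgroup of $G/H_{2i-1}$, monotonicity of $\mu$ gives $\mu(N_{i})\leq\mu(G/H_{2i-1})$. Finally, Lemma \ref{Lem_mu(G/H_2i-1)<=n_i-1}, applied in the case $i>1$, supplies $\mu(G/H_{2i-1})\leq n_{i-1}$. Concatenating the three estimates produces $5n_{i}\leq\mu(N_{i})\leq\mu(G/H_{2i-1})\leq n_{i-1}$, which is exactly the assertion.

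I do not anticipate a genuine obstacle: all the weight of the result is carried by Lemma \ref{Lem_mu(G/H_2i-1)<=n_i-1}, and the remaining steps are immediate from the standard properties of $\mu$. The one point deserving a moment's care is to confirm that $N_{i}$ has precisely $n_{i}$ \emph{non-abelian} simple direct factors and no abelian ones, so that both the count and the bound $\mu(T_{i,j})\geq 5$ apply to every factor; this is guaranteed by $R(G/H_{2i-1})=1$, which forces the socle to be a direct product of non-abelian simple groups.
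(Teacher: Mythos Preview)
Your proposal is correct and follows essentially the same argument as the paper: both use the additivity of $\mu$ on direct products of non-abelian simple groups together with $\mu(T)\geq 5$ to get $\mu(N_i)\geq 5n_i$, then combine monotonicity of $\mu$ under subgroups with Lemma~\ref{Lem_mu(G/H_2i-1)<=n_i-1} to obtain $\mu(N_i)\leq\mu(G/H_{2i-1})\leq n_{i-1}$.
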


\begin{proof}
Let $R\left( G\right) =H_{1}\leq \ldots \leq H_{t}=G$ be the non-abelian
socle series of $G$. By the preceding remarks, $N_{i}=soc\left(
G/H_{2i-1}\right) =T_{i1}\times \cdots \times T_{in_{i}}$ for all $1\leq
i\leq m$, where each $T_{ij}$ is a non-abelian simple group. We have: 
\begin{equation*}
\mu \left( N_{i}\right) =\sum_{j=1}^{n_{i}}\mu \left( T_{ij}\right) \geq
5n_{i}\text{.}
\end{equation*}%
On the other hand, by Lemma \ref{Lem_mu(G/H_2i-1)<=n_i-1}, $\mu \left(
N_{i}\right) \leq \mu \left( G/H_{2i-1}\right) \leq n_{i-1}$. Thus $%
5n_{i}\leq n_{i-1}$ for all $2\leq i\,\leq m$.
\end{proof}

\begin{corollary}
\label{Coro_BoundOnm} Let $G$ be a non-solvable group, and let $m$ be the
non-abelian socle length of $G$. Then $m<(1/\log _{2}5)\log _{2}\log
_{2}\left\vert G\right\vert _{\func{nab}}$.
\end{corollary}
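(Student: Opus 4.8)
The plan is to convert the multiplicative decay $5n_i\le n_{i-1}$ supplied by the preceding lemma into an exponential lower bound on $n_1$, then bound $|G|_{\func{nab}}$ from below using that every non-abelian simple group has order at least $60$, and finally take a double logarithm. The whole argument is short once these three pieces are in place.

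First I would iterate the inequality $5n_i\le n_{i-1}$, which holds for $2\le i\le m$, starting from $i=m$ and descending to $i=1$. Since each $N_i$ is a non-trivial direct product of non-abelian simple groups we have $n_i\ge 1$ for every $i$, and in particular $n_m\ge 1$. Chaining the inequalities therefore gives
\[
n_1\ge 5n_2\ge 5^2 n_3\ge\cdots\ge 5^{m-1}n_m\ge 5^{m-1}.
\]
Next I would bound $|G|_{\func{nab}}$. In the non-abelian socle series $R(G)=H_1\le\cdots\le H_t=G$, the layers $H_{2i+1}/H_{2i}=R(G/H_{2i})$ are solvable and contribute only abelian composition factors, whereas each layer $H_{2i}/H_{2i-1}=N_i$ is a direct product of $n_i$ non-abelian simple groups, which are thus non-abelian composition factors of $G$. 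The single layer $N_1$ already furnishes $n_1$ such factors, each of order at least $|A_5|=60$; since $|G|_{\func{nab}}$ is the product (with multiplicity) of the orders of all non-abelian composition factors of $G$, we obtain $|G|_{\func{nab}}\ge 60^{n_1}\ge 60^{5^{m-1}}$.

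Finally I would apply $\log_2$ twice. From $\log_2|G|_{\func{nab}}\ge 5^{m-1}\log_2 60$ we get $\log_2\log_2|G|_{\func{nab}}\ge (m-1)\log_2 5+\log_2\log_2 60$, so the target inequality $m\log_2 5<\log_2\log_2|G|_{\func{nab}}$ will follow provided $\log_2 5<\log_2\log_2 60$, equivalently $5<\log_2 60\approx 5.91$. This numerical comparison is exactly the main (and only genuine) obstacle: the estimate succeeds precisely because the smallest non-abelian simple group has order $60>2^5=32$, which leaves just enough slack after the double logarithm to absorb the constant $\log_2 5$. A weaker lower bound on the minimal order of a non-abelian simple group would force a correspondingly larger constant in place of $1/\log_2 5$, so I would make sure to invoke the sharp value $60$ rather than a crude substitute.
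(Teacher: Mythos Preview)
Your argument is correct and follows the same strategy as the paper: iterate the inequality $5n_i\le n_{i-1}$ from the preceding lemma, invoke the bound $|A_5|=60>2^5$ on the smallest non-abelian simple group, and take a double logarithm. The only difference is cosmetic: the paper sums over all layers to obtain $\sum_{i=1}^m n_i\ge (5^m-1)/4$ non-abelian composition factors and then passes to $|G|_{\func{nab}}>2^{5^m}$, whereas you use only the bottom layer $N_1$ to get $n_1\ge 5^{m-1}$ factors and $|G|_{\func{nab}}\ge 60^{5^{m-1}}$; both estimates reduce to the same numerical check $60>2^5$, and yours is marginally more direct.
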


\begin{proof}
By the previous lemma $5n_{i}\leq n_{i-1}$ for all $2\leq i\,\leq m$. Since $%
n_{m}\geq 1$ we get by induction, $n_{i}\geq 5^{m-i}$ for all $1\leq i\,\leq
m$, and hence the total number of non-abelian composition factors of $G$
satisfies 
\begin{equation*}
\sum_{i=1}^{m}n_{i}\geq \sum_{i=1}^{m}5^{m-i}=\sum\limits_{i=0}^{m-1}5^{i}=%
\frac{5^{m}-1}{4}\text{.}
\end{equation*}%
Each non-abelian composition factor of $G$ is of size at least $\left\vert
A_{5}\right\vert =60>2^{5}$ so 
\begin{equation*}
\left\vert G\right\vert _{\func{nab}}\geq 60^{\frac{5^{m}-1}{4}}>2^{\frac{5}{%
4}\left( 5^{m}-1\right) }\geq 2^{5^{m}}\text{,}
\end{equation*}%
which gives $m<(1/\log _{2}5)\log _{2}\log _{2}\left\vert G\right\vert _{%
\func{nab}}$.
\end{proof}

\begin{proof}[\textbf{Proof of Theorem \protect\ref{Th_loglog}}]
By Corollary \ref{Coro_gamma<= 1+nabSocleLength} we have $\gamma _{\func{cp}%
}^{\func{s}}(G)\leq 1+\sum_{i=1}^{m}\gamma _{\func{cp}}^{\func{ss}}(T_{i})$.
Since $T_{i}$ is a non-abelian composition factor we have $\left\vert
T_{i}\right\vert \leq \left\vert G\right\vert _{\func{nab}}$. Hence, by
Theorem \ref{Th_gammaSpecialForSimple}, $\gamma _{\func{cp}}^{\func{ss}%
}(T_{i})<c\log _{2}\log _{2}|G|_{\func{nab}}$ for all $1\leq i\leq m$.
Substituting in the previous inequality we get $\gamma _{\func{cp}}^{\func{s}%
}(G)\leq 1+mc\log _{2}\log _{2}|G|_{\func{nab}}$. Finally, $c\leq 12$ by
Theorem \ref{Th_gammaSpecialForSimple}, and $m<(1/\log _{2}5)\log _{2}\log
_{2}\left\vert G\right\vert _{\func{nab}}$ by Corollary \ref{Coro_BoundOnm},
so the claim of the theorem holds with $c_{S}\leq 12/\log _{2}5$.
\end{proof}

\section{Nilpotent $\func{cp}$-factorizations\label%
{Sect_nilpotent_factorizations}}

In this section we prove Theorem \ref{Th_Carter}. The proof relies on
Theorem \ref{Th_AffinePrimitive} which is of independent interest. Hence we
begin with the latter.

\subsection{$\func{cp}$-factorizations of affine primitive groups\label%
{SubSect_affine_primitive}}

Recall that a group $G$ is said to be primitive if it admits a maximal
subgroup $H$ which is core-free: $H_{G}=\bigcap_{g\in G}H^{g}=\{1\}$. If $G$
is an affine primitive permutation group, then it has exactly one minimal
normal subgroup $V$, which is abelian so $V\cong {C_{p}^{n}}$ for some prime 
$p$ and some natural number $n$. Moreover $G=VH$ and, viewing $V$ as the
vector space over ${\mathbb{F}_{p}}$, then $H$ acts by conjugation
irreducibly as a group of linear transformations on $V$. When convenient we
will use additive notation for $V$.

\begin{lemma}
\label{Lem_trick} Let $G$ be an affine primitive permutation group with
point stabilizer $H$ and minimal normal subgroup $V\cong {C_{p}^{n}}$. Let $%
h\in H$ and $v\in V$. Set $w:=v^{h^{-1}}-v$ and $k:=\lceil \log _{2}p\rceil $%
. Then $\left\langle w\right\rangle $ is contained in a product of $k+1$
conjugates of $H$.
\end{lemma}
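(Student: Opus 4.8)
The plan is to realize $G$ inside $\operatorname{AGL}(V)$ and to identify the conjugates of $H$ with the point stabilizers of the affine action on $V$. Since $G=VH$ with $V$ regular and normal and $H\cap V=1$, I would write each element of $G$ as an affine map $(A,t)\colon x\mapsto x^{A}+t$ with $A\in H\le\operatorname{GL}(V)$ and $t\in V$, so that $(A_{1},t_{1})(A_{2},t_{2})=(A_{1}A_{2},\,t_{1}^{A_{2}}+t_{2})$. The stabilizer $G_{u}$ of a point $u\in V$ is then $\{(A,\,u-u^{A}):A\in H\}$, and because $V$ alone acts transitively on $V$, every $G_{u}$ is the conjugate $H^{(I,u)}$ of $H=G_{0}$. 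As $\langle w\rangle\le V$ consists exactly of the translations $(I,jw)$ with $0\le j\le p-1$, it suffices to exhibit $k+1$ points $u_{1},\dots,u_{k+1}$ for which every such translation lies in $G_{u_{1}}\cdots G_{u_{k+1}}$.

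Next I would determine which translations occur in a product $G_{u_{1}}\cdots G_{u_{m}}$. Choosing $(A_{\ell},u_{\ell}-u_{\ell}^{A_{\ell}})\in G_{u_{\ell}}$ and setting $R_{\ell}:=A_{\ell+1}\cdots A_{m}$ (so $R_{m}=I$ and the product has linear part $A_{1}\cdots A_{m}=R_{0}$), the product is a translation precisely when $R_{0}=I$, and since $A_{\ell}R_{\ell}=R_{\ell-1}$ a telescoping computation gives
\[
t=(u_{m}-u_{1})+\sum_{\ell=1}^{m-1}(u_{\ell}-u_{\ell+1})^{R_{\ell}},
\]
where $R_{1},\dots,R_{m-1}$ may be taken to range freely over $H$ (for any such choice $A_{\ell}:=R_{\ell-1}R_{\ell}^{-1}$ recovers admissible linear parts with $R_{0}=R_{m}=I$). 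This reorganization is the crux of the argument, and the main obstacle to overcome: a priori each local translation $u_{\ell}-u_{\ell}^{A_{\ell}}$ is twisted by all the linear parts to its right, and the telescoping is what replaces this entangled product by an independent sum of $H$-orbit contributions $(u_{\ell}-u_{\ell+1})^{R_{\ell}}$.

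Finally I would choose the points to encode a binary expansion of $j$. Put $m=k+1$, let $u_{k+1}=0$ and $u_{\ell}=(2^{k}-2^{\ell-1})v$ for $1\le\ell\le k$, so that the consecutive differences are $u_{\ell}-u_{\ell+1}=2^{\ell-1}v$. Restricting each $R_{\ell}$ to $\{1,h^{-1}\}$, the $\ell$-th summand is $2^{\ell-1}v$ or $(2^{\ell-1}v)^{h^{-1}}=2^{\ell-1}v^{h^{-1}}$, and the two differ by $2^{\ell-1}(v^{h^{-1}}-v)=2^{\ell-1}w$. Taking all $R_{\ell}=1$ yields $t=(u_{k+1}-u_{1})+\sum_{\ell=1}^{k}2^{\ell-1}v=0$, so switching to $h^{-1}$ exactly the indices in a subset $S\subseteq\{1,\dots,k\}$ gives $t=\bigl(\sum_{\ell\in S}2^{\ell-1}\bigr)w$. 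As $S$ varies the coefficient ranges over $\{0,1,\dots,2^{k}-1\}$, and since $k=\lceil\log_{2}p\rceil$ forces $2^{k}\ge p$, every $jw$ with $0\le j\le p-1$ is attained. Hence $\langle w\rangle\subseteq G_{u_{1}}\cdots G_{u_{k+1}}$, a product of $k+1$ conjugates of $H$, as required. I expect the only genuine work beyond the telescoping identity to be the routine verification of the affine multiplication rule and of $G_{u}=H^{(I,u)}$; irreducibility of $H$ on $V$ is not needed here, only the semidirect structure $G=V\rtimes H$.
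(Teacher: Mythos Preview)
Your proof is correct and follows essentially the same route as the paper's: both use the binary expansion of the coefficient $j$ and the key observation that toggling the action of $h^{-1}$ on $2^{\ell-1}v$ contributes exactly $2^{\ell-1}w$, then arrange the conjugating elements so that adjacent stabilizers coincide and the product collapses to $k+1$ conjugates of $H$. The only difference is packaging: the paper notes directly that $cw=(cv)^{-1}h(cv)h^{-1}\in H^{cv}H$ and then conjugates the double cosets $(H^{2^{j}v}H)$ by suitable elements of $V$ to merge neighboring factors, whereas you derive the same collapse via an explicit telescoping identity for the translation part of a product of point stabilizers; your choice $u_{\ell}-u_{\ell+1}=2^{\ell-1}v$ is exactly the paper's choice $u_{j-1}-u_{j}=2^{j}v$ up to an index shift.
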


\begin{proof}
We can assume $w\neq 0_{V}=1_{G}$ for which the claim clearly holds. Then $w$
is of order $p$, and any element of $\left\langle w\right\rangle $ is of the
additive form $sw$ where the integer $s$ satisfies $0\leq s\leq p-1$. Since $%
k:=\lceil \log _{2}p\rceil $, the base 2 representation of $s$ takes the
form $s=\tsum\limits_{j=0}^{k-1}b_{j}2^{j}$ ($b_{j}\in \left\{ 0,1\right\} $
for all $0\leq j\leq k-1$). Now note that $w=v^{h^{-1}}-v=v^{-1}hvh^{-1}\in
H^{v}H$. Similarly, for any $c\in {\mathbb{F}_{p}}$ we have $cw=\left(
cv\right) ^{h^{-1}}-cv\in H^{cv}H$. Thus, identifying the powers $2^{j}$
with elements of ${\mathbb{F}_{p}}$, we see that $sw\in $ $\left(
H^{v}H\right) \left( H^{2v}H\right) \left( H^{2^{2}v}H\right) \cdots \left(
H^{2^{k-1}v}H\right) $, for any $0\leq s\leq p-1$, where we pick $0_{V}$
from the $j$-th factor $\left( H^{2^{j}v}H\right) $ in the product if $%
b_{j}=0$ and $2^{j}w$ if $b_{j}=1$. However, also note that since $V$ is
abelian, $\left( H^{2^{j}v}H\right) \cap V$ is invariant under conjugation
by any element of $V$. Hence, for any choice of $u_{0},...,u_{k-2}\in V$ we
have 
\begin{equation*}
sw\in \Pi _{H}:=\left( H^{v}H\right) ^{u_{0}}\left( H^{2v}H\right)
^{u_{1}}\left( H^{2^{2}v}H\right) ^{u_{2}}\cdots \left( H^{2^{k-2}v}H\right)
^{u_{k-2}}\left( H^{2^{k-1}v}H\right) \text{.}
\end{equation*}
Finally, for the choice $u_{k-2}=2^{k-1}v$, $u_{k-3}=u_{k-2}+2^{k-2}v$ and
in general $u_{k-j}=u_{k-j+1}+2^{k-j+1}v$ for all $2\leq j\leq k$ where $%
u_{k-1}:=0_{V}$,\ we get that $\Pi _{H}$ is equal to a product of $k+1$
conjugates of $H$.
\end{proof}

\begin{lemma}
\label{Lem_maxf(p)}For each prime number $p$ define $f(p):=\lceil \log
_{2}p\rceil /\log _{2}p$. Then $f(p)$ has a global maximum at $p=5$.
Consequently 
\begin{equation}
\lceil \log _{2}p\rceil \leq (3/\log _{2}5)\log _{2}p\text{, for every prime 
}p\text{.}  \label{Ineq_log2p}
\end{equation}
\end{lemma}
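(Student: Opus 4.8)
The plan is to decompose the primes into dyadic bands on which the numerator $\lceil \log _{2}p\rceil$ is constant, to maximize $f$ on each band, and then to show that only finitely many bands can compete with $p=5$. For an integer $k\geq 1$ let $B_{k}$ be the set of primes $p$ satisfying $2^{k-1}<p\leq 2^{k}$; this is exactly the set of primes with $\lceil \log _{2}p\rceil =k$. On $B_{k}$ the numerator of $f$ is the constant $k$, so $f(p)=k/\log _{2}p$ is strictly decreasing in $p$, and therefore the restriction of $f$ to $B_{k}$ attains its maximum at the smallest prime of $B_{k}$.

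First I would record a crude upper bound valid on every band. If $p\in B_{k}$ with $k\geq 2$, then $p>2^{k-1}$ gives $\log _{2}p>k-1$, whence
\begin{equation*}
f(p)=\frac{k}{\log _{2}p}<\frac{k}{k-1}\text{.}
\end{equation*}
The quantity $k/(k-1)$ is decreasing in $k$, and for $k\geq 5$ it is at most $5/4=1.25$. On the other hand $f(5)=3/\log _{2}5>1.29$ by a direct evaluation (using $\log _{2}5<2.33$). Hence every prime lying in a band $B_{k}$ with $k\geq 5$ satisfies $f(p)<5/4<f(5)$, so no such prime can realize the global maximum.

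This leaves only the primes in the bands $B_{1},\dots ,B_{4}$, that is $p\in \{2,3,5,7,11,13\}$, a finite list to be checked by hand. Evaluating $f$ at these (or, invoking the monotonicity above, only at the band minima $2,3,5,11$) shows that the largest value is $f(5)=3/\log _{2}5\approx 1.292$, which exceeds $f(2)=1$, $f(3)=2/\log _{2}3\approx 1.262$ and $f(11)=4/\log _{2}11\approx 1.156$. Combining this with the previous paragraph gives $f(p)\leq f(5)=3/\log _{2}5$ for every prime $p$, so $f$ attains its global maximum at $p=5$. Multiplying the inequality $f(p)\leq 3/\log _{2}5$ through by $\log _{2}p>0$ then yields the displayed bound $\lceil \log _{2}p\rceil \leq (3/\log _{2}5)\log _{2}p$.

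The only genuine content is the threshold computation: I must confirm that the band bound $k/(k-1)$ already drops below $f(5)$ precisely at $k=5$, which is what confines the direct check to the four smallest bands. No step poses a real obstacle; once the monotonicity-within-a-band observation is in place, the argument reduces to a short finite case analysis.
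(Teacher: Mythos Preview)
Your proof is correct and follows essentially the same approach as the paper: both use the crude bound $\lceil \log_2 p\rceil < \log_2 p + 1$ to reduce to a finite check of small primes, differing only in bookkeeping (you organize the argument via dyadic bands and cut off at $k\geq 5$, i.e.\ $p\geq 17$, whereas the paper uses the pointwise bound $f(p)\leq 1+1/\log_2 p$ and cuts off at $p\geq 11$).
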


\begin{proof}
First check that $1+1/\log _{2}11<1.29<3/\log _{2}5$. Then, using this, we
get: 
\begin{equation*}
f(p)\leq (\log _{2}p+1)/\log _{2}p=1+1/\log _{2}p<3/\log _{2}5=f(5)\text{, }%
\forall p\geq 11\text{,}
\end{equation*}%
and for $p=2,3,7$ we verify explicitly that $f\left( p\right) <f\left(
5\right) $. Hence $f(p)$ has a global maximum $f\left( 5\right) =3/\log
_{2}5 $ at $p=5$. Finally, $\lceil \log _{2}p\rceil =f(p)\log _{2}p$ $\leq
f(5)\log _{2}p$.
\end{proof}

\begin{proof}[\textbf{Proof of Theorem \protect\ref{Th_AffinePrimitive}}]
Using the notation introduced at the beginning of Subsection \ref%
{SubSect_affine_primitive}, $\log _{2}|G:H|~=$ $\log _{2}|V|~=$ $\log
_{2}p^{n}=n\log _{2}p$. Using Inequality \ref{Ineq_log2p}, we obtain:%
\begin{equation*}
1+n\lceil \log _{2}p\rceil \leq 1+(3/\log _{2}5)n\log _{2}p=1+(3/\log
_{2}5)\log _{2}|G:H|\text{.}
\end{equation*}

Thus, it is enough to show that $G$ is a product of at most $1+$ $n\lceil
\log _{2}p\rceil $ conjugates of $H$.

Fix a non-zero vector $v\in V$. If $v$ is central in $G$ then $V=\langle
v\rangle $ by minimality of $V$. It follows that $H$ is a non-trivial normal
subgroup of $HV=G$ since $V$ is central - a contradiction to $H$ being
core-free. Therefore $v$ is not central, and there is some $h\in H$ with $%
v^{h^{-1}}\neq v$. Set $w:=v^{h^{-1}}-v$.

We claim that there are $n$ elements $h_{1},\ldots ,h_{n}\in H$ such that $%
B:=\{w^{h_{1}},\ldots ,w^{h_{n}}\}$ is a vector space basis of $V=C_{p}^{n}$%
. Note that since $w\neq 0_{V}$, this claim is immediate for $n=1$, and
hence we assume $n\geq 2$. Suppose by contradiction that $1\leq m<n$ is the
maximal integer such that there exist $h_{1},h_{2},\ldots ,h_{m}\in H$ for
which $B=\{w^{h_{1}},\ldots ,w^{h_{m}}\}$ is linearly independent. It
follows that for any $h\in H$, $w^{h}\in Span\left( B\right) $. Thus $%
Span\left( B\right) =Span\left( \left\{ w^{h}|h\in H\right\} \right) $. This
shows that $Span\left( B\right) $ is a proper non-trivial $H$-invariant
subspace of $V$, contradicting the fact that $H$ acts irreducibly on $V$.
Thus there exists a basis of $V$ of the form $B:=\{w^{h_{1}},\ldots
,w^{h_{n}}\}$.

For each $v\in V$ there exist $s_{1},...,s_{n}\in {\mathbb{F}_{p}}$ for
which $v=\Sigma _{i=1}^{n}s_{i}w^{h_{i}}$. Applying Lemma \ref{Lem_trick} to
each $w^{h_{i}}$ separately, we get that each $v\in V$ belongs to $\Pi
_{1}\cdots \Pi _{n}$, where each $\Pi _{i}$ is a product of $\lceil \log
_{2}p\rceil +1$ conjugates of $H$. But, as in the proof of Lemma \ref%
{Lem_trick}, this shows that $V\subseteq \Pi _{1}^{u_{1}}\cdots \Pi
_{n-1}^{u_{n-1}}\Pi _{n}$ for any choice of $u_{1},...,u_{n-1}\in V$, and
one can choose these elements so that the product $\Pi _{1}^{u_{1}}\cdots
\Pi _{n-1}^{u_{n-1}}\Pi _{n}$ is a product of at most $n\lceil \log
_{2}p\rceil +1$ conjugates of $H$.
\end{proof}

\subsection{$\func{cp}$-factorizations of solvable groups by Carter subgroups%
}

Recall that $C\leq G$ is a Carter subgroup of $G$ if $C$ is nilpotent and
self normalizing.

\begin{lemma}
\label{Lem_Carter_properties} Let $G$ be a solvable group. Then

\begin{enumerate}
\item[(a)] There exists a Carter subgroup of $G$.

\item[(b)] There is a unique conjugacy class of Carter subgroups in $G$.

\item[(c)] If $C$ is a Carter subgroup of $G$ then $C$ is a maximal
nilpotent subgroup of $G$, that is, if $C<H\leq G$, then $H$ is not
nilpotent.

\item[(d)] If $C$ is a Carter subgroup of $G$ and $N\trianglelefteq G$ then $%
CN/N$ is a Carter subgroup of $G/N$.

\item[(e)] If $C$ is a Carter subgroup of $G$ then $C$ is Carter subgroup of 
$H$ for any $C\leq H\leq G$.
\end{enumerate}
\end{lemma}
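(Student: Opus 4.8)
The substantive content is parts (a) and (b), the existence and conjugacy of Carter subgroups of a solvable group; this is Carter's theorem \cite{Carter1961carter}, and I expect it to be the main obstacle. I would either invoke it directly or reprove it by induction on $|G|$. The inductive scheme chooses a minimal normal subgroup $N$ (elementary abelian, since $G$ is solvable), passes to $\overline{G}=G/N$, takes a Carter subgroup $\overline{C}$ of $\overline{G}$ by induction, and lets $K$ be its full preimage. If $K<G$ one obtains a Carter subgroup $C$ of $K$ by induction and verifies $N_G(C)=C$ using $N_{\overline{G}}(\overline{C})=\overline{C}$ together with $N_K(C)=C$. The delicate case is $K=G$, i.e.\ $\overline{G}$ nilpotent with $G$ non-nilpotent: then $N\not\le\Phi(G)$, so $N$ admits a complement $M$, and minimality of $N$ applied to the normal subgroup $C_N(M)$ forces $C_N(M)=1$, whence $N_G(M)=M$ and $M$ is a Carter subgroup. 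Conjugacy is proved in parallel and reduces, after quotienting by $N$, to showing that two self-normalizing complements of $N$ are conjugate; this is the genuinely hard point, which I would settle by an $H^{1}$-cohomology argument (equivalently via Gasch\"{u}tz's theory of $\mathfrak{N}$-projectors).

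Parts (e) and (c) follow directly from the definition and require neither (a) nor (b), so I would dispose of them first. For (e), if $C\le H\le G$ then $C$ is nilpotent and $N_H(C)\le N_G(C)=C$, hence $N_H(C)=C$; thus $C$ is self-normalizing in $H$, i.e.\ a Carter subgroup of $H$. For (c), suppose $C<H\le G$ with $H$ nilpotent; by the normalizer condition in nilpotent groups every proper subgroup is properly contained in its normalizer, so $C<N_H(C)$, while $N_H(C)\le N_G(C)=C$, a contradiction. Hence no nilpotent $H$ properly contains $C$, so $C$ is maximal among nilpotent subgroups.

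Finally, I would deduce (d) from the conjugacy statement (b). Set $M:=CN$; since $N\trianglelefteq G$ we have $N\trianglelefteq M$, and $CN/N\cong C/(C\cap N)$ is nilpotent. By part (e), $C$ is a Carter subgroup of $M$. For any $g\in N_G(CN)$ the subgroup $C^{g}$ lies in $(CN)^{g}=CN=M$ and is again nilpotent and self-normalizing in $M$, since $N_M(C^{g})\le N_G(C^{g})=C^{g}$; hence $C^{g}$ is a Carter subgroup of $M$. Applying (b) to the solvable group $M$ gives $C^{g}=C^{m}$ for some $m\in M$, so $gm^{-1}\in N_G(C)=C\le M$ and therefore $g\in M=CN$. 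Thus $N_G(CN)=CN$, and via the correspondence $N_{G/N}(CN/N)=N_G(CN)/N$ we conclude $N_{G/N}(CN/N)=CN/N$. Combined with nilpotency this shows $CN/N$ is a Carter subgroup of $G/N$, establishing (d).
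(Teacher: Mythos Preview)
Your proposal is correct. The paper itself does almost nothing here: it cites \cite{Carter1961carter} and \cite[Theorem 12.2(b) and Lemma 12.3]{Huppert1967gruppen} for (a)--(d) and gives the one-line observation for (e) that $N_H(C)\le N_G(C)=C$. So your treatment of (e) coincides with the paper's, and for (a)--(d) you go well beyond it.

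The main difference worth noting is your proof of (d). The paper simply attributes it to Carter/Huppert, whereas you deduce it cleanly from (b) via a Frattini-type argument inside $M=CN$: for $g\in N_G(CN)$ the conjugate $C^{g}$ is again Carter in $M$, so (b) applied to $M$ yields $g\in CN$. This is a genuinely self-contained route and makes transparent that (d) is a formal consequence of conjugacy rather than an independent fact. Your arguments for (c) and (e) from the normalizer condition and the definition are the standard ones. For (a) and (b) your inductive sketch is on the right track; just be aware that the step ``$N_G(C)=C$ follows from $N_{\overline{G}}(\overline{C})=\overline{C}$ and $N_K(C)=C$'' implicitly uses $CN=K$, which in turn relies on (d) for the smaller group $K$, so in a fully written-out version (a), (b), (d) should be proved together by simultaneous induction (as Carter does) rather than sequentially.
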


\begin{proof}
For (a)-(d) see \cite{Carter1961carter}, and \cite[Theorem 12.2(b) and Lemma
12.3]{Huppert1967gruppen}. For (e) note that since $C$ is self-normalizing
in $G$ it is self-normalizing in any subgroup of $G$ containing $C$.
\end{proof}

\begin{lemma}
Let $G$ be a solvable group and let $C$ be a Carter subgroup of $G$. Then $G$
is a product of conjugates of $C$.
\end{lemma}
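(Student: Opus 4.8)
The plan is to reduce the statement to the single assertion that the normal closure $C^{G}$ of $C$ in $G$ equals $G$. Indeed, as recalled in the introduction (see \cite{KL2007products}), for any finite group the normal closure $C^{G}$ is itself equal to a product of finitely many conjugates of $C$. Hence, once we establish $C^{G}=G$, the group $G$ is automatically a product of conjugates of $C$, which is precisely the claim. So the entire content of the lemma will be shifted onto proving $C^{G}=G$.

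To prove $C^{G}=G$, I would pass to the quotient $G/C^{G}$ and invoke Lemma \ref{Lem_Carter_properties}(d). Set $N:=C^{G}$, a normal subgroup of $G$ that contains $C$. By part (d) of that lemma, the image $CN/N$ is a Carter subgroup of the quotient $G/N$. But since $C\leq N$ we have $CN/N=N/N=1$, so the \emph{trivial} subgroup is a Carter subgroup of $G/N$.

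The decisive elementary remark is that the trivial subgroup can be self-normalizing only in the trivial group: in any group $K$ one has $N_{K}(\{1\})=K$, so $\{1\}$ being self-normalizing forces $K=\{1\}$. Since a Carter subgroup is by definition self-normalizing, the quotient $G/N$ must be trivial, i.e. $N=C^{G}=G$. Combined with the first paragraph, this yields that $G$ is a product of conjugates of $C$.

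I do not anticipate any real obstacle here, since the substantive facts are already packaged into the cited result on normal closures and into Lemma \ref{Lem_Carter_properties}(d); the only step requiring care is the simple observation that self-normalization of the identity subgroup collapses the ambient quotient. (One subtlety worth double-checking is that Lemma \ref{Lem_Carter_properties}(d) is stated for an arbitrary normal subgroup $N\trianglelefteq G$, which indeed covers the choice $N=C^{G}$.)
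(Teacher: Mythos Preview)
Your proof is correct and, like the paper, reduces everything to the assertion $C^{G}=G$. The argument you give for this equality, however, differs from the paper's. You pass to the quotient $G/C^{G}$ and invoke Lemma~\ref{Lem_Carter_properties}(d) to conclude that the trivial subgroup is a Carter subgroup there, whence $G/C^{G}=1$. The paper instead argues inside $N:=C^{G}$ via a Frattini-type step: for any $g\in G$ the subgroup $C^{g}$ is nilpotent and self-normalizing in $N$, hence a Carter subgroup of $N$, so by the conjugacy of Carter subgroups in $N$ (Lemma~\ref{Lem_Carter_properties}(b)) one finds $n\in N$ with $C^{g}=C^{n}$; then $gn^{-1}\in N_{G}(C)=C\leq N$, giving $g\in N$. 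Your route is a touch more streamlined, relying only on the quotient behaviour of Carter subgroups and the trivial observation that $N_{K}(1)=K$; the paper's route avoids quotients entirely and instead leans on the single conjugacy class property, yielding a self-contained Frattini-style computation. Both are short and both depend on standard properties from Lemma~\ref{Lem_Carter_properties}; neither has any advantage in generality here.
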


\begin{proof}
It is enough to show that $C^{G}=G$. Set $N:=C^{G}$. Note that $C$ is a
Carter subgroup of $N$. For any $g\in G$ we have $C^{g}\in N$ is a self
normalizing nilpotent subgroup of $N$. Hence $C^{g}$ is a Carter subgroup of 
$N$ and hence there exists $n\in N$ such that $C^{g}=C^{n}$. It follows that 
$gn^{-1}$ normalizes $C$ and therefore $gn^{-1}\in C\leq N$. Hence $g\in N$
implying $G=N$.
\end{proof}

\begin{definition}
\label{Def_gamma_Carter}Let $G$ be a finite solvable group. We denote by $%
\gamma _{\func{cp}}^{\func{c}}\left( G\right) $ the minimal length of a $%
\func{cp}$-factorization of $G$ by a Carter subgroup.
\end{definition}

Note that a nilpotent group $G$ is equal to its own Carter subgroup and
hence, for $G$ nilpotent, $\gamma _{\func{cp}}^{\func{c}}\left( G\right) =1$%
. Clearly $\gamma _{\func{cp}}^{\func{n}}\left( G\right) \leq \gamma _{\func{%
cp}}^{\func{c}}\left( G\right) $.

\begin{lemma}
\label{Lem_NContainedInC}Let $G$ be a solvable group and let $C$ be a Carter
subgroup of $G$. Let $N\trianglelefteq G$ be such that $N$ is contained in $%
C $. Then $\gamma_{\func{cp}}^{\func{c}}\left( G\right) =\gamma_{\func{cp}}^{%
\func{c}}\left(G/N\right) $.
\end{lemma}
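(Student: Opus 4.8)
The plan is to establish the two inequalities $\gamma_{\func{cp}}^{\func{c}}(G/N) \leq \gamma_{\func{cp}}^{\func{c}}(G)$ and $\gamma_{\func{cp}}^{\func{c}}(G) \leq \gamma_{\func{cp}}^{\func{c}}(G/N)$ separately, both resting on one elementary observation. Since $N \trianglelefteq G$ and $N \leq C$, for every $g \in G$ we have $N = N^{g} \leq C^{g}$, so that $C^{g}N = C^{g}$ and, passing to the quotient, $C^{g}/N = C^{g}N/N = (C/N)^{gN}$. Before using this I would record that $C/N = CN/N$ is a Carter subgroup of $G/N$ by Lemma \ref{Lem_Carter_properties}(d), and that, because all Carter subgroups of a solvable group form a single conjugacy class (Lemma \ref{Lem_Carter_properties}(b)), the invariant $\gamma_{\func{cp}}^{\func{c}}$ is realized by $\gamma_{\func{cp}}^{C}(G)$ in $G$ and by $\gamma_{\func{cp}}^{C/N}(G/N)$ in $G/N$. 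This lets me fix $C$ and $C/N$ as the base subgroups throughout.

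For the first inequality I would start from a shortest $\func{cp}$-factorization $G = C^{x_{1}} \cdots C^{x_{k}}$ with $k = \gamma_{\func{cp}}^{\func{c}}(G)$ and apply the canonical projection $\pi \colon G \rightarrow G/N$. As $\pi$ is a surjective homomorphism it carries setwise products of subsets to setwise products of their images, giving $G/N = (C^{x_{1}}/N) \cdots (C^{x_{k}}/N)$; by the displayed identity each factor $C^{x_{i}}/N$ is the conjugate $(C/N)^{x_{i}N}$ of the Carter subgroup $C/N$. Hence this is a $\func{cp}$-factorization of $G/N$ by a Carter subgroup of length $k$, which yields $\gamma_{\func{cp}}^{\func{c}}(G/N) \leq k$.

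For the reverse inequality I would take a shortest $\func{cp}$-factorization $G/N = (C/N)^{\bar{g}_{1}} \cdots (C/N)^{\bar{g}_{k}}$, with $k = \gamma_{\func{cp}}^{\func{c}}(G/N)$, and lift each $\bar{g}_{i}$ to some $g_{i} \in G$. Using the identity again, the $i$-th factor equals $C^{g_{i}}/N = \pi(C^{g_{i}})$, so the product set is $\pi(C^{g_{1}} \cdots C^{g_{k}})$; the hypothesis that it equals $G/N$ is therefore equivalent to $C^{g_{1}} \cdots C^{g_{k}} N = G$. Because $N \leq C^{g_{k}}$, the trailing $N$ is absorbed into the last factor and this reads $C^{g_{1}} \cdots C^{g_{k}} = G$, a $\func{cp}$-factorization of $G$ by $C$ of length $k$; thus $\gamma_{\func{cp}}^{\func{c}}(G) \leq k$. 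Combining the two inequalities gives the claimed equality.

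I do not expect a genuinely hard step here. The only point requiring care is the bookkeeping that $\pi$ turns a product of subgroups into the product of their images, and conversely that a product of images equals $G/N$ exactly when the corresponding product of preimages equals $G$; the containment $N \leq C^{g}$ for all $g$, forced by the normality of $N$ together with $N \leq C$, is precisely what makes this preimage already contain $N$ in each factor, so that both the descent and the lift of factorizations preserve the length exactly rather than merely up to an additive term.
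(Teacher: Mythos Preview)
Your proof is correct and follows essentially the same route as the paper's: both arguments hinge on the observation that $N\trianglelefteq G$ with $N\leq C$ forces $N\leq C^{g}$ for every $g$, so that conjugates of $C$ correspond bijectively to conjugates of $C/N$ under the quotient map, and a product of such conjugates covers $G$ if and only if its image covers $G/N$. Your write-up is in fact more explicit than the paper's, spelling out the appeal to Lemma~\ref{Lem_Carter_properties}(b) and~(d) and the absorption $C^{g_{k}}N=C^{g_{k}}$ in the lifting step.
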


\begin{proof}
It is clear that if $N$ is contained in $C$ then it is contained in every
conjugate of $C$. Suppose that $G=C_{1}\cdots C_{k}$ where $C_{i}$ is a
conjugate of $C$ for all $1\leq i\leq k$. Then $\overline{G}=\overline{C_{1}}%
\cdots\overline{C_{k}}$, where, for any $A\leq G$ we denote $\overline{A}%
:=AN/N$, and each $\overline{C_{i}}$ is a Carter subgroup of $\overline{G}$.
Conversely, if $\overline{G}=\overline{C_{1}}\cdots\overline{C_{k}}$, where
the $\overline{C_{i}}$ are Carter subgroups of $\overline{G}$, then, by
assumption, the full preimage of $\overline{C_{i}}$ in $G$ is a Carter
subgroup $C_{i}$ of $G$ and we can conclude that $G=C_{1}\cdots C_{k}$. The
claim follows.
\end{proof}

\begin{proof}[\textbf{Proof of Theorem \protect\ref{Th_Carter}}]
The proof is by induction on $\left\vert G\right\vert $. Let $C$ be a Carter
subgroup of $G$. If $G$ is nilpotent then $G=C$, $\gamma _{\func{cp}}^{\func{%
c}}\left( G\right) =1$, and the claim clearly holds. Hence we can assume
that $G$ is non-nilpotent. Let $N$ be a minimal normal subgroup of $G$. For
any $A\leq G$ denote $\overline{A}:=AN/N$. Then $\overline{G}=\overline{C_{1}%
}\cdots \overline{C_{k}}$, where each $\overline{C_{i}}$ is a Carter
subgroup of $\overline{G}$, and $k=\gamma _{\func{cp}}^{\func{c}}\left( 
\overline{G}\right) $. By Lemma \ref{Lem_Carter_properties}(d), the full
preimage of $\overline{C_{i}}$ in $G$ is $C_{i}N$ where $C_{i}$ is a Carter
subgroup of $G$, and we get $G=C_{1}\cdots C_{k}N$. If $k>1$, $C_{k}N$ is
proper in $G$. By Lemma \ref{Lem_Carter_properties}(e) $C_{k}$ is a Carter
subgroup of $C_{k}N$ and hence we get by induction that $C_{k}N$ is a
product of $\gamma _{\func{cp}}^{\func{c}}\left( C_{k}N\right) \leq
c_{A}\log _{2}\left( \left\vert C_{k}N:C_{k}\right\vert \right) +1$
conjugates of $C_{k}$. Since $C_{k}$ is conjugate to $C$ and $N$ is normal
we have $\left\vert C_{k}N:C_{k}\right\vert =\left\vert CN:C\right\vert $.
Therefore 
\begin{align*}
\gamma _{\func{cp}}^{\func{c}}\left( G\right) & \leq k-1+c_{A}\log
_{2}\left( \left\vert C_{k}N:C_{k}\right\vert \right) +1 \\
& =\gamma _{\func{cp}}^{\func{c}}\left( \overline{G}\right) +c_{A}\log
_{2}\left( \left\vert CN:C\right\vert \right) \\
& \leq c_{A}\log _{2}\left( \left\vert G/N:CN/N\right\vert \right)
+1+c_{A}\log _{2}\left( \left\vert CN:C\right\vert \right) \\
& =c_{A}\log _{2}\left( \left\vert G:C\right\vert \right) +1\text{,}
\end{align*}%
and the claim is proved. Hence we can assume $k=1$.

In this case we have $G=CN$, where $C$ is a Carter subgroup of $G$ and $N$
is a minimal normal subgroup of $G$. Since $G$ is solvable, $N$ is
elementary abelian and in particular, $\left\vert N\right\vert =p^{n}$ for
some prime $p$ and some positive integer $n$. Suppose that $C$ contains a
non-trivial normal subgroup $L$ of $G$. By Lemma \ref{Lem_NContainedInC}, $%
\gamma _{\func{cp}}^{\func{c}}\left( G\right) =\gamma _{\func{cp}}^{\func{c}%
}\left( G/L\right) $ and $G/L=\left( C/L\right) N$ and $N$ is minimal normal
in $G/L$. Thus we can assume that $C$ is core-free. Under this assumption $%
C_{G}\left( N\right) =N$. Indeed, $N\leq C_{G}\left( N\right) $ because $N$
is abelian so by Dedekind's law, $C_{G}\left( N\right) =C_{G}\left( N\right)
\cap \left( NC\right) =N\left( C_{G}\left( N\right) \cap C\right) $. Since $%
C_{G}\left( N\right) \trianglelefteq G$ we get that $C$ normalizes $%
C_{G}\left( N\right) \cap C$. Moreover, $N$ centralizes $C_{G}\left(
N\right) $ hence it normalizes $C_{G}\left( N\right) \cap C$. Thus we proved
that $C_{G}\left( N\right) \cap C\trianglelefteq G$. Since $C$ is core-free,
we get $C_{G}\left( N\right) \cap C=1$ and $C_{G}\left( N\right) =N$.
Finally, since $G$ is solvable, $G$ is primitive iff it has a
self-centralizing minimal normal subgroup (\cite[Proposition A.15.8(b)]%
{DH1992soluble}). Thus we can conclude that $G$ is primitive and $C$ is
maximal and non-normal. Now apply Theorem \ref{Th_AffinePrimitive}, with $%
H=C $.
\end{proof}

\section*{{\protect\huge Appendix} \label{appendix}}

Table \ref{TblSporadics} lists, for each sporadic simple group $S$ including
the Tits group ${}^{2}F_{4}\left( 2\right) ^{\prime }$, an upper bound on $%
\gamma _{\func{cp}}^{p}(S)$ (column heading $\gamma _{\func{cp}}^{p}(S)\leq $%
) for a specified prime $p$. Under column heading $p^{\alpha }$ the maximal
power of $p$ dividing $\left\vert S\right\vert $ is given. Under the column
heading $A$ we specify a subgroup $A<S$ on which the bound is based, using
ATLAS notation (\cite{ATLAS}). We have verified, using Lemma \ref%
{Lem_ss_SporadicTools} (d) and sometimes a MAGMA computation (\cite{MAGMA}),
that $N_{S}\left( P\right) $, the normalizer in $S$ of some Sylow $p$%
-subgroup $P$ of $S$ is solvable. For each $A$ in the table we have $\gamma
_{\func{cp}}^{A}(S)=3$ - this was verified using the "mfer" tool \cite%
{BreuerMuller} (see Subsection \ref{SubSection_SpecialSolvable}). In all
cases, with a few exceptions detailed below (all associated with \ $p=2$), $%
A $ contains a Sylow $p$-subgroup of $S$. For $S=M_{11},J_{1}$, we have $%
P\leq A\leq N_{S}\left( P\right) $ so the bound is exact and $\gamma _{\func{%
cp}}^{p}(S)=3$. For the other cases the bound is derived using Lemma \ref%
{Lem_ss_SporadicTools} (a) and a bound on $\gamma _{\func{cp}}^{p}(A)$ (when 
$\gamma _{\func{cp}}^{p}(S)\leq 9$, we have $\gamma _{\func{cp}}^{p}(A)=3$).
The determination of the bound on $\gamma _{\func{cp}}^{p}(A)$ uses a
variety of means: Lemma \ref{Lem_ss_Lie}, information on subgroups of $A$
from \cite{ATLAS}, an application of the "mfer" tool to $A$, and previous
results from the table. For the $S=B$, where the bound is $12$, we have
deduced $\gamma _{\func{cp}}^{2}(A)\leq 4$ from Theorem \ref%
{Th_unipotent_Sylows}. In this as well as in the case of $S=M$, the argument
relies on Lemma \ref{Lem_ss_SporadicTools} (e),(f), and hence $A$ need not
contain a Sylow $2$-subgroup of $S$. \bigskip

Remarks for Table \ref{TblSporadics}:

$^{\text{(1)}}$ A Sylow $2$-subgroup of $A$ is self-normalizing of index $3$%
, hence $\gamma _{\func{cp}}^{2}(A)=3$.

$^{\text{(2)}}$ A Tomlib mfer calculation shows that $L_{2}\left( 16\right) $
is a product of three Sylow $5$-subgroup normalizers (structure $D_{30}$).
Hence $L_{2}\left( 16\right) :2$ is a product of three Sylow $5$-subgroup
normalizers (structure $D_{10}\times S_{3}$).

$^{\text{(3)}}$,$^{\text{(4)}}$,$^{\text{(6)}}$ $A$ is a group of Lie type
hence it is a product of three Sylow $2$-subgroup normalizers. A MAGMA
computation shows that the Sylow $2$-subgroup of $A$ is self-normalizing.
Therefore $S$ is a product of nine conjugates of a $2$-subgroup and hence it
is a product of nine Sylow $2$-subgroup normalizers.

$^{\text{(5)}}$ $2.HS.2$ is a central extension of $HS.2$ hence the order $2$
center is contained in the Sylow $11$-subgroup normalizer of $2.HS.2$.

\begin{center}
\bigskip 
\begin{table}[H] \centering%
\begin{tabular}{|c|c|c|c|c|}
\hline
$S$ & $A$ & $p^{\alpha }$ & $\gamma _{\func{cp}}^{p}(S)\leq $ & Remarks \\ 
\hline
$M_{11}$ & $11:5$ & $11$ & $3$ &  \\ \hline
$M_{12}$ & $M_{11}$ & $11$ & $9$ &  \\ \hline
$J_{1}$ & $2^{3}:7:3$ & $2^{3}$ & $3$ &  \\ \hline
$M_{22}$ & $L_{2}(11)$ & $11$ & $9$ &  \\ \hline
$J_{2}$ & $U_{3}(3)$ & $3^{3}$ & $9$ &  \\ \hline
$M_{23}$ & $M_{11}$ & $11$ & $9$ &  \\ \hline
${}^{2}F_{4}\left( 2\right) ^{\prime }$ & $2^{2}.\left[ 2^{8}\right] :S_{3}$
& $2^{11}$ & $9$ & $^{\text{(1)}}$ \\ \hline
$HS$ & $M_{11}$ & $11$ & $9$ &  \\ \hline
$J_{3}$ & $L_{2}(16):2$ & $5$ & $9$ & $^{\text{(2)}}$ \\ \hline
$M_{24}$ & $2^{6}:\left( L_{3}\left( 2\right) \times S_{3}\right) $ & $%
2^{10} $ & $9$ &  \\ \hline
$M^{c}L$ & $U_{4}\left( 3\right) $ & $3^{6}$ & $9$ &  \\ \hline
$He$ & $S_{4}(4):2$ & $2^{10}$ & $9$ & $^{\text{(3)}}$ \\ \hline
$Ru$ & ${}^{2}F_{4}\left( 2\right) $ & $2^{14}$ & $9$ & $^{\text{(4)}}$ \\ 
\hline
$Suz$ & $2_{-}^{1+6}.U_{4}\left( 2\right) $ & $2^{13}$ & $9$ &  \\ \hline
$O^{\prime }N$ & $L_{3}\left( 7\right) :2$ & $7^{3}$ & $9$ &  \\ \hline
$Co_{3}$ & $2.S_{6}(2)$ & $2^{10}$ & $9$ &  \\ \hline
$Co_{2}$ & $2_{+}^{1+8}:S_{6}(2)$ & $2^{18}$ & $9$ &  \\ \hline
$Fi_{22}$ & $O_{7}\left( 3\right) $ & $3^{9}$ & $9$ &  \\ \hline
$HN$ & $2.HS.2$ & $11$ & $27$ & $^{\text{(5)}}$ \\ \hline
$Ly$ & $G_{2}\left( 5\right) $ & $5^{6}$ & $9$ &  \\ \hline
$Th$ & $2^{5}.L_{5}\left( 2\right) $ & $2^{15}$ & $9$ &  \\ \hline
$Fi_{23}$ & $S_{8}\left( 2\right) $ & $2^{18}$ & $9$ & $^{\text{(6)}}$ \\ 
\hline
$Co_{1}$ & $2_{+}^{1+8}.O_{8}^{+}\left( 2\right) $ & $2^{21}$ & $9$ &  \\ 
\hline
$J_{4}$ & $2^{11}:M_{24}$ & $2^{21}$ & $27$ &  \\ \hline
$Fi_{24}^{\prime }$ & $3^{7}.O_{7}\left( 3\right) $ & $3^{16}$ & $9$ &  \\ 
\hline
$B$ & $2.^{2}E_{6}(2).2$ & $2^{41}$ & $12$ &  \\ \hline
$M$ & $2.B$ & $2^{46}$ & $36$ &  \\ \hline
\end{tabular}
\caption
{Upper bounds on the minimal length of special solvable conjugate factorzations of simple sporadic groups and the Tits group}%
\label{TblSporadics}%
\end{table}%
\ \ \ 
\end{center}

\end{document}